\newtheorem{lem}{Lemma}[section]
\newtheorem{prop}{Proposition}[section]
\newtheorem{thm}{Theorem}[section]
\theoremstyle{definition}
\newtheorem{definition}{Definition}[section]
\theoremstyle{remark}
\theoremstyle{remark}
\newtheorem{remark}{Remark}[section]
\numberwithin{equation}{section}
\newcommand{\C}{{\mathbb C}}
\newcommand{\N}{{\mathbb N}}
\newcommand{\R}{{\mathbb R}}
\definecolor{blu}{rgb}{0,0,1}
\title[Existence and instability of standing waves]{Existence and instability of standing waves  with prescribed norm for a class of Schr\"odinger-Poisson  equations}
\author[Jacopo Bellazzini]{Jacopo Bellazzini}
\address{Jacopo Bellazzini
\newline\indent
Universit\` a di Sassari
\newline\indent
Via Piandanna 4, 07100 Sassari, Italy}
\email{jbellazzini@uniss.it}
\author[Louis Jeanjean]{Louis Jeanjean}
\address{Louis Jeanjean
\newline\indent
Laboratoire de Math\'ematiques (UMR 6623)
\newline\indent
Universit\'{e} de Franche-Comt\'{e}
\newline\indent
16, Route de Gray 25030 Besan\c{c}on Cedex, France}
\email{louis.jeanjean@univ-fcomte.fr}
\author[Tingjian Luo]{Tingjian Luo}
\address{Tingjian Luo
\newline\indent
Laboratoire de Math\'ematiques (UMR 6623)
\newline\indent
Universit\'{e} de Franche-Comt\'{e}
\newline\indent
16, Route de Gray 25030 Besan\c{c}on Cedex, France}
\email{tingjian.luo@univ-fcomte.fr}
\begin{document}
\subjclass[2000]{35J50, 35Q41, 35Q55, 37K45}

\keywords{Schr\"odinger-Poisson equations, standing waves, orbital instability, variational methods}

\begin{abstract}
In this paper we study the existence and the instability of standing waves with prescribed $L^2$-norm for a class of Schr\"odinger-Poisson-Slater equations in $\R^{3}$
\begin{equation}\label{evolution1}
i\psi_{t}+ \Delta \psi - (|x|^{-1}*|\psi|^{2}) \psi+|\psi|^{p-2}\psi=0 
\end{equation}
when  $p \in (\frac{10}{3},6)$. To obtain such solutions we look to critical points of the energy functional
$$F(u)=\frac{1}{2}\left \| \triangledown u \right \|_{L^{2}(\mathbb{R}^3)}^2+\frac{1}{4}\int_{\mathbb{R}^3}\int_{\mathbb{R}^3}\frac{\left | u(x) \right |^2\left | u(y) \right |^2}{\left | x-y \right |}dxdy-\frac{1}{p}\int_{\mathbb{R}^3}\left | u \right |^pdx $$
on  the constraints given by
$$S(c)= \{u \in H^1(\mathbb{R}^3) :\   \left \| u \right \|_{L^2(\R^3)}^2=c, c>0 \}.$$
For the values $p \in (\frac{10}{3}, 6)$ considered, the functional $F$ is unbounded from below on $S(c)$ and the existence of critical points is obtained by a mountain pass argument developed on $S(c)$. We show that critical points exist provided that $c>0$ is sufficiently small and that  when $c>0$ is not small a non-existence result is expected. Concerning the dynamics we show for initial condition $u_0\in H^1(\R^3)$ of the associated Cauchy problem with $\|u_0\|_{2}^2=c$ that the mountain pass energy level $\gamma(c)$ gives a threshold for global existence. Also the strong instability of standing waves at  the mountain pass energy level is proved. Finally we  draw a comparison between the Schr\"odinger-Poisson-Slater equation and the classical nonlinear Schr\"odinger equation.
\end{abstract}
\maketitle
\section{Introduction}
In this paper we prove the existence and the strong instability of standing
waves for the following Schr\"odinger-Poisson-Slater equations:
\begin{equation}\label{main}
 i\partial_t u+\Delta u  - (\left | x \right |^{-1}\ast \left | u \right |^2) u+|u|^{p-2}u=0 \  \mbox{ in } \ \mathbb{R} \times \mathbb{R}^{3}.
\end{equation}
This class of Schr\"odinger type equations with a repulsive nonlocal Coulombic potential is obtained by approximation of the  Hartree-Fock equation describing a quantum mechanical system of many particles, see for instance \cite{BA}, \cite{LS}, \cite{L2}, \cite{MA}.
We look for standing waves solutions of \eqref{main}. Namely for solutions in the form 
$$u(t,x)= e^{-i\lambda t}v(x),$$ where $\lambda \in \R$. Then the function $v(x)$ satisfies the equation
\begin{equation}\label{eq}
-\Delta v - \lambda v + (\left | x \right |^{-1}\ast \left | v \right |^2) v-|v|^{p-2}v=0 \ \mbox{ in } \  \mathbb{R}^{3}.
\end{equation}
The case where $\lambda \in \R$ is a fixed and assigned parameter has been extensively studied in these last years, see e.g. \cite{AP}, \cite{DM}, \cite{HK2}, \cite{HK}, \cite{R} and the references therein. In this case critical points of the functional
defined in $H^{1}(\mathbb R^{3})$
$$
J(u): =\frac{1}{2}\int_{\R^{3}}|\nabla u|^{2}dx-\frac{\lambda}{2}\int_{\R^{3}}|u|^{2}dx+\frac{1}{4}\int_{\mathbb{R}^3}\int_{\mathbb{R}^3}\frac{\left | u(x) \right |^2\left | u(y) \right |^2}{\left | x-y \right |}dxdy-\frac{1}{p}\int_{\R^{3}}|u|^{p}dx$$
give rise to solutions  of \eqref{eq}. In the present paper, motivated by the fact that physics are often interested in ``normalized" solutions, we search for solutions with prescribed $L^2$-norm. A solution of \eqref{eq} with $\left \| u \right \|_{L^2(\R^3)}^2=c$ can be obtained as a constrained critical point of the functional
$$F(u):=\frac{1}{2}\left \| \triangledown u \right \|_{L^{2}(\mathbb{R}^3)}^2+\frac{1}{4}\int_{\mathbb{R}^3}\int_{\mathbb{R}^3}\frac{\left | u(x) \right |^2\left | u(y) \right |^2}{\left | x-y \right |}dxdy-\frac{1}{p}\int_{\mathbb{R}^3}\left | u \right |^pdx $$
\noindent
on the constraint $$S(c):= \{u \in H^1(\mathbb{R}^3) :\   \left \| u \right \|_{L^2(\R^3)}^2=c \}.$$
Note that in this case the frequency can not longer by imposed but instead appears as a Lagrange parameter. As we know, $F(u)$ is a well defined and $C^1$ functional on $S(c)$ for any
$p \in (2,6]$ (see \cite{R} for example). For $p\in(2,\frac{10}{3})$ the functional $F(u)$ is bounded from below and coercive on $S(c)$. The existence of minimizers for $F(u)$ constrained has been studied in the \cite{BS1}, \cite{BS}, \cite{SS}. It has been proved in \cite{SS}, using techniques introduced in  \cite{CattoL}, that minimizer exist for $p = \frac{8}{3}$ provided that $c \in (0,c_0)$ for a suitable $c_0>0$.
In \cite{BS} it is proved that minimizers exist
provided that $c>0$ is small and
$p \in (2,3)$. In \cite{BS1} the case $p \in (3, \frac{10}{3})$ is considered and a minimizer is obtained for $c >0$ large enough.\medskip

In this paper we consider the case $p \in (\frac{10}{3},6)$. For this range of power the functional $F(u)$ is no more bounded from below on $S(c)$. We shall prove however that it has a mountain pass geometry. 
\begin{definition}\label{defMP}
Given $c>0$, we say that $F(u)$ has a mountain pass geometry on $S(c)$ if there exists $K_c>0$, such that
$$\gamma(c)=\inf_{g \in \Gamma_c } \max_{t \in [0,1]}F(g(t))>\max \{F(g(0)),F(g(1))\},$$
holds in the set
$$\Gamma_c =\{g \in C([0,1],S(c)), \ g(0) \in A_{K_c},F(g(1))<0\},$$
where $A_{K_c}= \{u \in S(c):\ \left \| \triangledown u \right \|_{L^{2}(\mathbb{R}^3)}^2 \leq K_c\}$.
\end{definition}

In order to find critical points of $F(u)$ on $S(c)$ we look at the mountain pass level $\gamma(c)$.
Our main result concerning the existence of solutions of \eqref{eq} is given by the following
\begin{thm}\label{mainvero}
Let $p \in (\frac{10}{3},6)$ and $c>0$ then $F(u)$ has a mountain pass geometry on $S(c)$. Moreover there exists $c_0>0$ such that for any $c \in (0,c_0)$
there exists a couple $(u_{c}, \lambda_c)\in H^1(\R^3)\times \R^-$ solution of \eqref{eq} with $||u_c||_2^2=c$ and $F(u_c)=\gamma(c)$.
\end{thm}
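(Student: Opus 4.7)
I would use the $L^2$-preserving dilation $u_t(x) := t^{3/2}u(tx)$, which gives
\[
F(u_t) = \tfrac{t^2}{2}\|\nabla u\|_2^2 + \tfrac{t}{4}\,D(u) - \tfrac{t^{(3p-6)/2}}{p}\|u\|_p^p, \quad D(u) := \iint_{\R^3\times\R^3}\tfrac{|u(x)|^2|u(y)|^2}{|x-y|}\,dx\,dy.
\]
Because $p>10/3$, the exponent $(3p-6)/2$ exceeds $2$, so $F(u_t)\to-\infty$ as $t\to\infty$. Conversely, Gagliardo--Nirenberg on $S(c)$ yields $\|u\|_p^p\leq C c^{(6-p)/4}\|\nabla u\|_2^{(3p-6)/2}$, so the scalar function $\phi(y):=\tfrac12 y^2 -\tfrac{C}{p}c^{(6-p)/4}y^{(3p-6)/2}$ has a strictly positive maximum at some $y^*_c>0$. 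Choosing $K_c<(y^*_c)^2$ and using continuity of $\|\nabla\cdot\|_2$ along any path in $S(c)$, every $g\in\Gamma_c$ must cross the barrier $\{\|\nabla u\|_2=y^*_c\}$ where $F$ exceeds a positive constant, delivering the mountain pass geometry for every $c>0$.

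\textbf{Bounded Palais--Smale sequence.} To obtain an $H^1$-bounded PS sequence at level $\gamma(c)$, I would adopt the trick of the second author from 1997: work with the augmented functional $\widetilde F(u,s):=F(s\star u)$, $(s\star u)(x):=e^{3s/2}u(e^s x)$, on the enlarged constraint $S(c)\times\R$. A minimax over the natural lifted class $\widetilde\Gamma_c$ has the same value $\gamma(c)$, and the PS condition for $\widetilde F$ produces $v_n:=s_n\star u_n\in S(c)$ satisfying $F(v_n)\to\gamma(c)$, $F'|_{S(c)}(v_n)\to 0$, and additionally the asymptotic Pohozaev residue
\[
P(v_n):=\|\nabla v_n\|_2^2 + \tfrac14 D(v_n) - \tfrac{3(p-2)}{2p}\|v_n\|_p^p \to 0.
\]
Forming the combination $F(v_n)-\tfrac{2}{3(p-2)}P(v_n)=\tfrac{3p-10}{6(p-2)}\|\nabla v_n\|_2^2+\tfrac{3p-8}{12(p-2)}D(v_n)$, whose coefficients are positive for $p\in(10/3,6)$, yields uniform boundedness in $H^1$ and a Lagrange multiplier sequence $\lambda_n$ with $F'(v_n)-\lambda_n v_n\to 0$ in $H^{-1}$.

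\textbf{Compactness (the main obstacle).} Up to translations, $v_n\rightharpoonup v_c$ in $H^1$; I must rule out vanishing and dichotomy of the PS sequence, and here the smallness condition $c<c_0$ is decisive. The plan is to prove a strict subadditivity $\gamma(c)<\gamma(c')+\gamma(c-c')$ for all $c'\in(0,c)$, obtained from the explicit scaling behaviour of the mountain pass barrier in the $L^2$-supercritical regime, which kills mass splitting, while positivity $\gamma(c)>0$ kills vanishing. The nonlocal Coulomb term is handled via its weak continuity together with a Br\'ezis--Lieb-type decomposition of $D$. Strong $H^1$-convergence then yields $v_c\in S(c)$ with $F(v_c)=\gamma(c)$, and passing to the limit in $\lambda_n$ gives some $\lambda_c\in\R$ so that $v_c$ solves \eqref{eq}.

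\textbf{Sign of $\lambda_c$.} Testing the equation against $v_c$, combining with the Pohozaev identity, and eliminating $\|v_c\|_p^p$ through $P(v_c)=0$ yields
\[
\lambda_c c = \tfrac{p-6}{3(p-2)}\|\nabla v_c\|_2^2 + \tfrac{5p-12}{6(p-2)} D(v_c).
\]
The first coefficient is negative, the second positive, so $\lambda_c<0$ reduces to comparing kinetic and Coulomb energies. Hardy--Littlewood--Sobolev combined with Gagliardo--Nirenberg gives $D(v_c)\leq C c^{3/2}\|\nabla v_c\|_2$, while the positive quadratic form expressing $\gamma(c)=F(v_c)$ enforces $\|\nabla v_c\|_2\gtrsim\gamma(c)^{1/2}$. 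Since the scaling of the mountain pass barrier gives $\gamma(c)\to\infty$ as $c\to 0$ in the supercritical regime, the kinetic term dominates for $c$ small and $\lambda_c\in\R^-$ follows.
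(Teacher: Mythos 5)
Your mountain-pass geometry argument and your treatment of the sign of $\lambda_c$ are essentially the paper's: the two identities obtained by testing the equation against $v_c$ and from the Pohozaev identity, elimination of $\|v_c\|_p^p$ through $Q(v_c)=0$, the Hardy--Littlewood--Sobolev/Gagliardo--Nirenberg bound $B(v_c)\le C\,\|\nabla v_c\|_2\,c^{3/2}$, and the fact that $Q(v_c)=0$ forces $\|\nabla v_c\|_2\to\infty$ as $c\to 0$ are exactly Lemma \ref{pohoz}. Your route to a bounded Palais--Smale sequence carrying the extra information $Q(v_n)\to 0$ is genuinely different: you invoke the augmented functional $\widetilde F(u,s)=F(s\star u)$, whereas the paper first proves $\gamma(c)=\inf_{V(c)}F$ and then runs a quantitative deformation argument to localize a Palais--Smale sequence near $V(c)\cap\{F\le\gamma(c)+1\}$, a set which is bounded by the identity $F-\frac{2}{3(p-2)}Q=\frac{3p-10}{6(p-2)}A+\frac{3p-8}{12(p-2)}B$. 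Either device yields the same two facts, so that divergence is harmless.

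The genuine gap is the compactness step. You propose to exclude dichotomy by the strict subadditivity $\gamma(c)<\gamma(c')+\gamma(c-c')$, ``obtained from the explicit scaling behaviour of the mountain pass barrier.'' Two problems. First, subadditivity is not the inequality that closes the argument for a minimax level: if $v_n\rightharpoonup\bar u$ with $\|\bar u\|_2^2=c_1<c$, the Brezis--Lieb splitting gives $F(\bar u)+F(v_n-\bar u)=\gamma(c)+o(1)$, and since $\bar u$ solves the limit equation and hence lies in $V(c_1)$ one gets $F(\bar u)\ge\gamma(c_1)$; but the remainder $v_n-\bar u$ is not in $V(c-c_1)$ and carries no lower bound of the form $\gamma(c-c_1)$ --- all one can extract from $Q(v_n-\bar u)=o(1)$ and the coercive identity above is $F(v_n-\bar u)\ge o(1)$. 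The contradiction therefore requires $\gamma(c_1)>\gamma(c)$ for $c_1<c$, i.e.\ strict monotonicity of $c\mapsto\gamma(c)$ (or the non-strict inequality combined with $\lambda_c\neq 0$, which is the route the paper actually takes via Lemma \ref{main-abs2}, using the equation itself to recover $\|v_n-\bar u\|_2\to 0$). Second, neither monotonicity nor subadditivity can be obtained ``from scaling'': as Remark \ref{proofnonin} points out, $A$, $B$ and $C$ scale with three different powers under any mass-changing dilation, so no rescaling maps $S(c')$ into $S(c)$ while controlling all three terms of $F$ simultaneously. The paper's proof that $\gamma$ is non-increasing is a non-scaling ``added mass at infinity'' construction (truncate a near-optimal $u_1\in V(c_1)$, glue a remote bump carrying the missing mass, and control the cross term of the Coulomb energy by the distance between the supports), supplemented by a continuity argument for $\gamma$ and the smallness of $c$ to guarantee $\lambda_c<0$. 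None of this machinery appears in your plan, and without it the dichotomy case is not excluded; this is precisely where the restriction $c<c_0$ and most of the work of the paper enter.
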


Let us underline some of the difficulties that arise in the study of the existence of critical points for our functional on $S(c)$. First the mountain pass geometry does not guarantee the existence of a bounded Palais-Smale sequence. To overcome this difficulty we introduce the functional

$$ Q(u):=\int_{\R^{3}} |\nabla u|^{2}dx  +\frac{1}{4}\int_{\mathbb{R}^3}\int_{\mathbb{R}^3}\frac{\left | u(x) \right |^2\left | u(y) \right |^2}{\left | x-y \right |}dxdy -\frac{3(p-2)}{2p}\int_{\R^{3}} |u|^{p}dx,$$
the set
$$V(c):=\{u \in S(c)\ : \ Q(u)=0 \}$$
and we first prove that
\begin{equation} \label{mini}
\gamma(c) = \inf_{u \in V(c)}F(u).
\end{equation}
We also show that each constrained critical point of $F(u)$ must lie in $V(c)$. At this point taking advantage of the nice ``shape" of some sequence of paths $(g_n) \subset \Gamma_c$ such that
$$\max_{t \in [0,1]}F(g_n(t)) \to \gamma(c),$$
we construct a special Palais-Smale sequence $\{u_n\} \subset H^1(\R^3)$ at the level $\gamma(c)$ which concentrates around $V(c)$. This localization leads to its boundedness but also provide the information that $Q(u_n) = o(1)$. This last property is crucially used in the study of the compactness of the sequence. Next,  since we look for solutions with a prescribed $L^2$-norm, we must deal with a possible lack of compactness for sequences which does not minimize $F(u)$ on $S(c)$. In our setting it does not seem possible to reduce the problem to the classical vanishing-dichotomy-compactness scenario and to the check of the associated strict subadditivity inequalities, see \cite{L}. To overcome this difficulty we first study the behaviour of the function $c \to \gamma(c)$. The theorem below summarizes its properties.

\begin{thm}\label{maingamma}
Let $p \in (\frac{10}{3},6)$ and for any $c>0$ let $\gamma(c)$ be the mountain pass level. Then
\begin{itemize}
\item[(i)] $ c \to \gamma(c)$ is continuous at each $c >0$.
\item[(ii)] $\displaystyle c \to \gamma(c)$ is non-increasing.
\item[(iii)] There exists $c_0>0$ such that in $(0,c_0)$ the function $c \to \gamma(c)$ is strictly decreasing.
\item[(iv)] There exists $c_{\infty}>0$ such that for all $c \geq c_{\infty}$ the function $c \to \gamma(c)$ is constant.
\item[(v)] $\displaystyle \lim_{c \to 0} \gamma(c) = + \infty$ and $\displaystyle \lim_{c \to \infty}\gamma(c) := \gamma(\infty) >0.$
\end{itemize}
\end{thm}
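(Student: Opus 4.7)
My plan hinges on the identification $\gamma(c) = \inf_{u \in V(c)} F(u)$ from (1.1), combined with the $L^2$-preserving dilation $u_t(x) := t^{3/2} u(tx)$. Since $p > 10/3$, the exponent $\beta := 3(p-2)/2 > 2$ makes $t \mapsto F(u_t) = \tfrac{t^2}{2}\|\nabla u\|_2^2 + \tfrac{t}{4}D(u) - \tfrac{t^\beta}{p}\|u\|_p^p$ have a unique maximizer $t(u) > 0$, at which $u_{t(u)} \in V(c)$. Define the continuous function
\[
G(T, D, P) := \max_{t > 0}\bigl(\tfrac{t^2}{2}T + \tfrac{t}{4}D - \tfrac{t^\beta}{p}P\bigr)
\]
on $(0, \infty)^3$; one has $\gamma(c) = \inf_{u \in S(c)} G(\|\nabla u\|_2^2, D(u), \|u\|_p^p)$, with equality to $F(u)$ on $V(c)$. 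Eliminating $\|u\|_p^p$ via $Q(u)=0$ gives the useful representation
\[
F(u) = \tfrac{3p-10}{6(p-2)}\|\nabla u\|_2^2 + \tfrac{3p-8}{12(p-2)} D(u), \qquad u \in V(c),
\]
with both coefficients strictly positive for $p \in (10/3, 6)$.

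For (ii), given $c_1 < c_2$ and $u \in V(c_1)$ nearly attaining $\gamma(c_1)$, I would build a competitor in $V(c_2)$ by adding a spread-out and translated piece: pick $\phi \in C_c^\infty(\R^3)$ with $\|\phi\|_2^2 = c_2 - c_1$, set $\phi^{\delta}(x) := \delta^{3/2}\phi(\delta x)$, and consider $w := u + \phi^{\delta}(\cdot - y_R)$. Since $\|\nabla \phi^{\delta}\|_2^2 = O(\delta^2)$, $D(\phi^{\delta}) = O(\delta)$, $\|\phi^{\delta}\|_p^p = O(\delta^\beta)$, and the cross terms (in $L^2$, $L^p$, and in $D$) vanish as $|y_R| \to \infty$, the triple $(\|\nabla w\|_2^2, D(w), \|w\|_p^p)$ tends to that of $u$ while $\|w\|_2^2 \to c_2$. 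After a scalar renormalization to $S(c_2)$ and projection along the scaling orbit into $V(c_2)$, continuity of $G$ yields $\gamma(c_2) \leq F(u) + O(\varepsilon)$, hence $\gamma(c_2) \leq \gamma(c_1)$. For (i), the simpler trick $w_n := \sqrt{c/c_n}\, u_n$ applied to near-minimizers $u_n \in V(c_n)$ gives norms differing from those of $u_n$ by $o(1)$; continuity of $G$ combined with the a priori bounds on $V(c_n)$ yields $\gamma(c_n) \to \gamma(c)$ from both sides.

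For the first limit in (v), the Gagliardo-Nirenberg inequality $\|u\|_p^p \leq C\|\nabla u\|_2^{3(p-2)/2}\|u\|_2^{(6-p)/2}$ combined with $\|\nabla u\|_2^2 \leq \tfrac{3(p-2)}{2p}\|u\|_p^p$ on $V(c)$ forces $\|\nabla u\|_2^2 \geq C' c^{-(6-p)/(3p-10)}$ (as $3(p-2)/4 > 1$); the formula for $F|_{V(c)}$ then gives $\gamma(c) \to +\infty$ as $c \to 0$ and, at the same time, $\gamma(c) > 0$ for every fixed $c$. For (iv) and the second limit in (v), (ii) yields $\gamma(\infty) := \lim_{c \to \infty}\gamma(c) \in [0, +\infty)$. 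Take $c_n \to \infty$ and $u_n \in V(c_n)$ with $F(u_n) \to \gamma(\infty)$; the $F|_{V(c)}$-formula and $Q(u_n) = 0$ give uniform bounds on $\|\nabla u_n\|_2$, $D(u_n)$, $\|u_n\|_p$. A Lions-type concentration-compactness / profile decomposition argument, combined with Brezis-Lieb-type splitting of all three integrals and of $Q$, produces (after translations) a nonzero profile $u_* \in H^1(\R^3)$ with $Q(u_*) = 0$, $c_\infty := \|u_*\|_2^2 \in (0, \infty)$, and $F(u_*) \leq \gamma(\infty)$; hence $\gamma(c_\infty) \leq F(u_*) \leq \gamma(\infty)$, and combined with $\gamma(c_\infty) \geq \gamma(\infty)$ from (ii), one gets $\gamma(c_\infty) = \gamma(\infty)$, so by (ii) again $\gamma(c) = \gamma(\infty)$ on $[c_\infty, \infty)$. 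Positivity $\gamma(\infty) > 0$ follows from $\gamma(c_\infty) > 0$, itself a consequence of the Gagliardo-Nirenberg lower bound. This profile extraction is the most delicate step.

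Finally, for (iii), Theorem 1.1 provides for $c \in (0, c_0)$ a minimizer $u_c \in V(c)$ solving the Euler-Lagrange equation with $\lambda_c < 0$; pairing this equation with $u_c$ yields $\|\nabla u_c\|_2^2 + D(u_c) - \|u_c\|_p^p = \lambda_c c$. Consider $w_\alpha := \sqrt{\alpha}\, u_c \in S(\alpha c)$ for $\alpha$ near $1$, and the projection $\tilde w_\alpha := (w_\alpha)_{t(w_\alpha)} \in V(\alpha c)$, so that
\[
\gamma(\alpha c) \leq F(\tilde w_\alpha) = G\bigl(\alpha\|\nabla u_c\|_2^2,\, \alpha^2 D(u_c),\, \alpha^{p/2}\|u_c\|_p^p\bigr).
\]
The envelope theorem (the inner maximizer equals $1$ at $\alpha = 1$ since $u_c \in V(c)$) gives
\[
\frac{d}{d\alpha} F(\tilde w_\alpha)\Big|_{\alpha = 1} = \tfrac{1}{2}\bigl(\|\nabla u_c\|_2^2 + D(u_c) - \|u_c\|_p^p\bigr) = \tfrac{\lambda_c c}{2} < 0,
\]
so $\gamma(\alpha c) < \gamma(c)$ for $\alpha$ slightly larger than $1$; together with (ii) this upgrades to strict decrease throughout $(0, c_0)$.
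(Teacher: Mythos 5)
Parts (i), (ii), (iii) and the first limit in (v) of your proposal are essentially the paper's own arguments: the continuity of the max-function $G$ is the paper's Lemma \ref{lemma2}; the ``add mass far away'' construction for monotonicity is the paper's Lemma \ref{prop1} (they truncate and push a bump supported on a far annulus rather than translating, but the cross-term estimates are the same); the scalar renormalization for continuity is Lemma \ref{continuity}; your envelope-theorem computation for (iii) is a cleaner packaging of the two-parameter scaling plus implicit function theorem in Lemma \ref{strictdecrease}, and the reduction $F|_{V(c)}=\frac{3p-10}{6(p-2)}A+\frac{3p-8}{12(p-2)}B$ combined with Gagliardo--Nirenberg for $\gamma(c)\to+\infty$ is Lemma \ref{limzero} (your version, applied to all of $V(c)$ rather than to the minimizer, is if anything slightly cleaner). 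These parts are fine.

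The genuine gap is in (iv) and in the positivity $\gamma(\infty)>0$. Your sequence $u_n\in V(c_n)$ with $c_n\to\infty$ has bounded $A(u_n)$, $B(u_n)$, $\|u_n\|_p$ but $\|u_n\|_2^2=c_n\to\infty$, so it is \emph{not} bounded in $H^1$ and any limit profile lives a priori only in the space $E$ of (\ref{set}) (or $D^{1,2}$). There is no reason for such a profile to satisfy $\|u_*\|_2^2<\infty$; this is precisely the zero-mass difficulty, and the paper resolves it by an entirely different route: it invokes the Ianni--Ruiz ground state of the free functional on $E$ at level $m>0$, proves $\gamma(c)\ge m$ for every $c$ by a path-comparison argument (Lemma \ref{boundbelow}), and then uses the decay estimate of Theorem \ref{th3.1} --- which rests on the specific lower bound $(|x|^{-1}*|u|^2)\ge C|x|^{-1}$ --- to show that this ground state actually lies in some $S(c_0)$, forcing $\gamma(c_0)=m$ and hence constancy for $c\ge c_0$. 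Without an input of this type your scheme cannot close: (a) to rule out vanishing of $(u_n)$ you need $\gamma(\infty)>0$, but your only positivity tool, the Gagliardo--Nirenberg bound $A(u)\gtrsim c^{-(6-p)/(3p-10)}$ on $V(c)$, degenerates to $0$ as $c\to\infty$, so the argument is circular; (b) even granting a nonzero weak limit after translations, $Q(u_*)=0$ and $F(u_*)\le\gamma(\infty)$ do not follow from Brezis--Lieb splitting alone without controlling the sign of $Q$ and of $F$ on the remainder; and (c) the finiteness of $\|u_*\|_2$ is exactly the hard analytic fact (Theorem \ref{th3.1}) you are implicitly assuming. That this step cannot be soft is shown by the pure NLS comparison in Section \ref{Section7}: there the identical scheme would ``produce'' a profile, yet $\tilde\gamma(c)$ is strictly decreasing with $\tilde\gamma(c)\to0$, so no constancy plateau exists. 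You need to either import the results of \cite{IARU} together with Theorem \ref{th3.1}, or supply an independent proof of a uniform positive lower bound for $\gamma$ and of the $L^2$-integrability of the limiting zero-mass profile.
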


We show that if $\gamma(c) < \gamma(c_1),$ for all $ c_1 \in (0, c)$ then there exists $u_c \in H^1(\R^3)$ such that $||u_{c}||_2^2 = c$ and $F(u_c) = \gamma(c)$. However we are only able to prove this for $c>0$ sufficiently small. For the other values of $c >0$ the information that $c \to \gamma(c)$ is non increasing permits to reduce the problem of convergence to the one of showing that the associated Lagrange multiplier $\lambda_c \in \R$ is non zero. However we do prove that $\lambda_c=0$ holds for any  $c>0$ is sufficiently large. In view of this property we conjecture that $\gamma(c)$ is not a critical value for $c>0$ large enough. See Remark \ref{versnonexits} in that direction.

\begin{remark}\label{proofnonin}
The proof that $c \to \gamma(c)$ is non increasing is not derived through the use of some scaling. Due to the presence of three terms in $F(u)$ which scale differently such an approach seems difficult. Instead we show that if one adds in a suitable way $L^2$-norm in $\R^3$ then this does not increase the mountain pass level. This approach is reminiscent of the one developed in \cite{JESQ} but here the fact that we deal with a function defined by a mountain pass instead of a global minimum and that $F(u)$ has a nonlocal term makes the proof more delicate.
\end{remark}

To show Theorem \ref{maingamma} (iv) and that $\gamma(c)\to \gamma(\infty) >0$ as $c\to \infty$ in (v) we take advantage of some results of \cite{IARU}. In \cite{IARU} the equation
\begin{equation}\label{eqfree}
-\Delta v + (\left | x \right |^{-1}\ast \left | v \right |^2) v-|v|^{p-2}v=0 \ \mbox{ in } \  \mathbb{R}^{3}
\end{equation}
is considered. Real solutions of (\ref{eqfree}) are searched in the space
\begin{eqnarray}\label{set}
E := \{ u \in D^{1,2}(\R^3) : \int_{\R^3}\int_{\R^3} \frac{|u(x)|^2|u(y)|^2}{|x-y|}dxdy < \infty \}
\end{eqnarray}
which contains $H^1(\R^3)$. This space is the natural space when $\lambda =0$ in (\ref{eq}). In \cite{IARU} it is shown that $F(u)$ defined in $E$ possess a ground state. It is also proved, see Theorem 6.1 of \cite{IARU}, that any real radial solution of (\ref{eqfree}) decreases exponentially at infinity. We extend here this result to any real solution of (\ref{eqfree}). More precisely we prove
\begin{thm}\label{th3.1}
Let $p \in (3,6)$ and  $( u,\lambda )\in E\times \R$ with $\lambda\leq 0$ be a real solution of (\ref{eq}). Then  there exists constants $C_1>0$, $C_2>0$ and $R>0$ such that
\begin{eqnarray}\label{decay}
|u(x)|\leq C_1 |x|^{-\frac{3}{4}}e^{-C_2\sqrt{|x|}},\ \forall\ |x|>R.
\end{eqnarray}
In particular, $u \in H^{1}(\R^{3})$.
\end{thm}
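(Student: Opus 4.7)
The plan is to derive the decay by a maximum-principle comparison with an explicit radial supersolution $w_0(x) = |x|^{-3/4}e^{-C_2\sqrt{|x|}}$. Rewriting the equation in Schr\"odinger form
$$-\Delta u + V(x)u = 0,\qquad V(x) := -\lambda + \phi_u(x) - |u(x)|^{p-2},\qquad \phi_u := \frac{1}{|x|}\ast|u|^{2},$$
the hypothesis $\lambda\leq 0$ together with $\phi_u\geq 0$ makes $V$ a positive Coulomb-type tail at infinity, and $w_0$ is the natural comparison function: a direct computation in spherical coordinates yields
$$-\Delta w_0 = \Bigl(\frac{3}{16|x|^{2}}-\frac{C_{2}^{2}}{4|x|}\Bigr)w_0,$$
so that $-\Delta w_0+(c_0/|x|)w_0\geq 0$ at infinity as soon as $C_{2}^{2}<4c_0$.

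First I would secure the qualitative inputs $u\in L^\infty(\R^3)\cap C^{0,\alpha}_{\mathrm{loc}}(\R^3)$ and $u(x)\to 0$ as $|x|\to\infty$. These follow from a Brezis--Kato bootstrap applied to the coefficient $\lambda+|u|^{p-2}-\phi_u$ (since $u\in D^{1,2}\hookrightarrow L^{6}$ forces $|u|^{p-2}\in L^{6/(p-2)}$ with $6/(p-2)>3/2$ for $p<6$, while HLS places $\phi_u$ in a suitable $L^{r}$), combined with De Giorgi--Nash--Moser regularity, and the standard observation that a continuous $L^{6}\cap L^\infty$ function must vanish at infinity (by H\"older continuity, otherwise one contradicts $\|u\|_{L^{6}}<\infty$). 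Next, since $u\not\equiv 0$ is continuous, one can pick $R>0$ with $m:=\int_{B_R}|u|^{2}>0$; then for $|x|\geq R$ the elementary estimate
$$\phi_u(x)\geq \int_{B_R}\frac{|u(y)|^{2}}{|x-y|}\,dy\geq \frac{m}{|x|+R}\geq \frac{c_0}{|x|},\qquad c_0:=m/2,$$
produces the required Coulomb lower bound. Kato's inequality then gives $-\Delta|u|+V|u|\leq 0$ in $\mathcal{D}'(\R^{3})$.

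The main obstacle is the indefinite-sign term $-|u|^{p-2}$ inside $V$: merely $u(x)\to 0$ does not yield $|u|^{p-2}=o(1/|x|)$, which is what is needed for $V(x)\geq (c_0-\epsilon)/|x|$ in the exterior. I would resolve this by a preliminary rough decay $|u(x)|\leq C|x|^{-\beta}$ with $\beta>1/(p-2)$, obtained either by iterating Caccioppoli/Moser estimates on dyadic annuli around infinity (exploiting that $\phi_u$ already provides a positive coefficient there) or by a first maximum-principle comparison against a polynomial supersolution of $-\Delta\cdot+(c_0/|x|)\cdot=0$. Once $V(x)\geq (c_0-\epsilon)/|x|$ is established on $\{|x|\geq R_1\}$ for some $R_1$ large and $\epsilon<c_0-C_{2}^{2}/4$, choose $A>0$ so that $Aw_0\geq |u|$ on the sphere $\{|x|=R_1\}$; since $|u|$ and $Aw_0$ both vanish at infinity and satisfy $-\Delta\cdot+V\cdot\leq 0$, resp.\ $\geq 0$, on the unbounded domain $\{|x|>R_1\}$, the weak maximum principle on exterior domains delivers $|u(x)|\leq A|x|^{-3/4}e^{-C_{2}\sqrt{|x|}}$ there, which is \eqref{decay}. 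The inclusion $u\in H^{1}(\R^{3})$ is then immediate from the exponential decay.
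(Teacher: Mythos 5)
Your overall architecture coincides with the paper's: regularity and vanishing of $u$ via a bootstrap, the Coulomb lower bound $\phi_u(x)\geq c_0/|x|$ at infinity via the maximum principle, Kato's inequality for $|u|$ (the paper works with $u^{+}$ and $u^{-}$ separately), and a comparison on an exterior domain with the explicit supersolution $|x|^{-3/4}e^{-C_2\sqrt{|x|}}$ of $-\Delta+\sigma/|x|$ (your computation of $-\Delta w_0$ is correct and matches the bound the paper imports from \cite{AMR}). You have also correctly isolated the only real difficulty: $u(x)\to 0$ does not make $|u|^{p-2}$ small compared with $c_0/|x|$. The gap is that your proposed resolution --- a preliminary polynomial decay $|u(x)|\leq C|x|^{-\beta}$ with $\beta>1/(p-2)$ --- is left unproved, and the two methods you sketch do not clearly deliver it when $p\in(3,4]$. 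There $1/(p-2)\geq 1/2$, while a Caccioppoli/Moser estimate on annuli of width comparable to $|x|$, starting only from $u\in L^{6}$, gives at best $|u(x)|=o(|x|^{-1/2})$ (and even that requires care, since the rescaled potential $R^{2}|u|^{p-2}(R\cdot)$ is not uniformly controlled). Feeding a rate $\beta$ back into $-\Delta|u|+\frac{c_0}{|x|}|u|\leq |u|^{p-1}$ and comparing with $|x|^{-\gamma}$ forces $\gamma=(p-1)\beta-1$, and the map $\beta\mapsto(p-1)\beta-1$ has $1/(p-2)$ as a \emph{repelling} fixed point: starting from $\beta<1/(p-2)$ the exponent degrades rather than improves. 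Your alternative (a direct maximum-principle comparison for $-\Delta+c_0/|x|-|u|^{p-2}$) is circular, since the validity of the maximum principle for that operator is exactly what is in question.

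The paper closes this gap with a specific device you are missing: it first proves $u^{+}\leq \widetilde C\,\phi_u$ on an exterior domain, by comparing $u^{+}$ with $\phi_u+d/|x|$ and observing that $-\Delta\bigl(u^{+}-\phi_u\bigr)\leq (u^{+})^{p-1}-4\pi u^{2}\leq 0$ for $|x|$ large, which holds because $p-1>2$ and $u(x)\to 0$. With this in hand one writes $(u^{+})^{p-2}=(u^{+})^{p-3}u^{+}\leq o(1)\,\widetilde C\,\phi_u$ (using $p>3$ a second time), so the focusing term is absorbed into $-\phi_u u^{+}$ and one arrives at $-\Delta u^{+}+\frac{\sigma}{|x|}u^{+}\leq 0$ for any $0<\sigma<C_0$; the same argument applied to $u^{-}=(-u)^{+}$ finishes the proof. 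If you insert this comparison lemma (or an equivalent substitute valid down to $p=3$), your argument goes through; as written it covers at best $p\in(4,6)$.
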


\begin{remark}\label{zeromass}
Clearly the difficult case here is when $\lambda =0$ and it correspond to the so-called {\it zero mass case,} see \cite{BeLi}. This part of Theorem \ref{th3.1} was kindly provided to us by L. Dupaigne \cite{Du}. We point out that the exponential decay when $\lambda =0$ is due to the fact that the nonlocal term is sufficienty strong at infinity. Actually we prove that $(|x|^{-1}* |v|^2) \geq C |x|^{-1}$ for some $C>0$ and $|x|$ large. In contrast we recall that for the equation
\begin{equation}\label{potential}
- \Delta u + V(x) u - |u|^{p-2}u =0, \quad x \in H^1(\R^3),
\end{equation}
if we assume that
$\limsup_{|x| \to \infty}V(x) |x|^{2+ \delta} = 0$
for some $\delta >0$, then positive solutions of (\ref{potential}) decay no faster than $|x|^{-1}$. This can be seen by comparing with an explicit subsolution at infinity $|x|^{- 1}(1+ |x|^{- \delta})$ of $- \Delta + V$.
\end{remark}

Theorem \ref{th3.1} is interesting for itself and also it answers a conjecture of \cite{IARU}, see Remark 6.2 there. For our study the information that any solution of (\ref{eqfree}) belongs to $L^2(\R^3)$ is crucial to derive Theorem \ref{maingamma} (iv)-(v) and the exponential decay  is also used later to prove that our solutions correspond to standing waves unstable by blow-up. \medskip

The phenomena described in Theorems  \ref{mainvero} and \ref{maingamma} are also due to the nonlocal term as we can see by comparing \ref{main} with the classical nonlinear Schr\"odinger equation
\begin{equation}\label{evolution2}
i\psi_{t}+ \Delta \psi  +|\psi|^{p-2}\psi=0 \ \  \text{ in } \R^{3}.
\end{equation}
In \cite{LJ} the existence of standing waves on $S(c)$ when the functional is unbounded from below was considered and a solution obtained for any $c>0$. Here we show in addition that the mountain pass value $\tilde{\gamma}(c)$ associated to (\ref{evolution2}) is strictly decreasing as a function of $c >0$ and that $\tilde \gamma(c) \to 0 $ as $c \to \infty.$ \medskip

The fact that (\ref{mini}) holds and that any constrained critical point of $F(u)$ lies in $V(c)$ implies that the solutions found in Theorem \ref{mainvero} can be considered as ground-states within the solutions having the same $L^2$-norm.

Let us denote the set of minimizers of $F(u)$ on $V(c)$ as
\begin{eqnarray}\label{minimizerset}
\mathcal{M}_c := \{u_c\in V(c)\ : \ F(u_c)=\inf_{u\in V(c)}F(u)\}.
\end{eqnarray}

\begin{thm}\label{naturalconstraint}
Let $p \in (\frac{10}{3}, 6)$ and $c>0$. For each $u_c \in \mathcal{M}_c$ there exists a $\lambda_c \leq 0$ such that $(u_c, \lambda_c) \in H^1(\R^3) \times \R$ solves (\ref{eq}).
\end{thm}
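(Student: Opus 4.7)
My approach has three ingredients: apply Lagrange multipliers on the two-constraint system defining $V(c)=S(c)\cap Q^{-1}(0)$, use the $L^2$-preserving dilation to kill the multiplier attached to $Q$, and finally exploit the monotonicity of $\gamma$ (Theorem \ref{maingamma}(ii)) to sign the remaining multiplier. Since $u_c$ minimizes the $C^1$ functional $F$ on $V(c)$, Lagrange multipliers produce $\lambda_c,\mu_c\in\R$ with $F'(u_c)=\lambda_c u_c+\mu_c Q'(u_c)$ in $H^{-1}(\R^3)$, provided $u_c$ and $Q'(u_c)$ are linearly independent; this independence will drop out of the next step.

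Consider the $L^2$-preserving scaling $u_c^t(x):=t^{3/2}u_c(tx)$ and the tangent vector $\xi:=\tfrac{d}{dt}u_c^t|_{t=1}$. Since $\|u_c^t\|_2^2$ is constant in $t$, $\langle u_c,\xi\rangle_{L^2}=0$, and the identity $\tfrac{d}{dt}F(u_c^t)|_{t=1}=Q(u_c)=0$ gives $\langle F'(u_c),\xi\rangle=0$. Testing the Lagrange relation against $\xi$ reduces it to $\mu_c\langle Q'(u_c),\xi\rangle=0$. A direct computation of $\tfrac{d}{dt}Q(u_c^t)|_{t=1}$, with the $L^p$-term eliminated via $Q(u_c)=0$, yields
$$\langle Q'(u_c),\xi\rangle=\frac{10-3p}{2}\|\nabla u_c\|_2^2+\frac{8-3p}{8}\iint_{\R^3\times\R^3}\frac{|u_c(x)|^2|u_c(y)|^2}{|x-y|}\,dxdy,$$
which is strictly negative for $p\in(\tfrac{10}{3},6)$ since $u_c\not\equiv 0$. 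This forces $\mu_c=0$ and simultaneously confirms the independence of $u_c$ and $Q'(u_c)$; hence $(u_c,\lambda_c)$ solves \eqref{eq}.

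For the sign of $\lambda_c$, set $v_s(x):=s\,u_c(x)\in S(s^2c)$ for $s$ near $1$. Since $\tfrac{d^2}{dt^2}F(u_c^t)|_{t=1}<0$ for $p\in(\tfrac{10}{3},6)$ (by a similar computation), the implicit function theorem gives a $C^1$ map $s\mapsto t_s>0$ with $t_1=1$ such that $v_s^{t_s}(x):=t_s^{3/2}v_s(t_sx)\in V(s^2c)$. Setting $h(s):=F(v_s^{t_s})$, the envelope theorem combined with the Nehari identity coming from $F'(u_c)=\lambda_c u_c$ yields
$$h'(1)=\frac{d}{ds}F(su_c)\Big|_{s=1}=\|\nabla u_c\|_2^2+\iint_{\R^3\times\R^3}\frac{|u_c(x)|^2|u_c(y)|^2}{|x-y|}\,dxdy-\|u_c\|_p^p=\lambda_c c.$$
On the other hand, for $s\in(1-\varepsilon,1]$ we have $s^2c\le c$ and $v_s^{t_s}\in V(s^2c)$, so Theorem \ref{maingamma}(ii) gives $h(s)\ge\gamma(s^2c)\ge\gamma(c)=h(1)$. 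Hence $s=1$ is a left-local minimum of the $C^1$ function $h$, forcing $\lambda_c c=h'(1)\le 0$, i.e., $\lambda_c\le 0$. The principal obstacle is precisely this last step: combining the Nehari and Pohozaev identities with $Q(u_c)=0$ expresses $\lambda_c c$ as a linear combination of $\|\nabla u_c\|_2^2$ and the Coulombic energy whose coefficients have opposite signs in our range of $p$, so the sign of $\lambda_c$ cannot be read off algebraically and the global monotonicity of $\gamma$ is indispensable.
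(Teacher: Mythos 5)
Your route is genuinely different from the paper's in its main step, but it contains a gap there. You apply a two-multiplier Lagrange rule on $V(c)=S(c)\cap Q^{-1}(0)$ and kill $\mu_c$ by pairing the relation $F'(u_c)=\lambda_c u_c+\mu_c Q'(u_c)$ with the generator of the $L^2$-preserving dilation, $\xi=\tfrac32 u_c+x\cdot\nabla u_c$. For a function that is a priori only in $H^1(\R^3)$, $\xi$ is in general neither in $H^1$ nor even in $L^2$ (the weight $|x|$ is unbounded), so the $H^{-1}$ identity cannot be tested against $\xi$, and the chain-rule identifications $\langle F'(u_c),\xi\rangle=\tfrac{d}{dt}F(u_c^t)|_{t=1}$ and $\langle u_c,\xi\rangle_2=0$ are not justified; the same problem undermines your verification that $u_c$ and $Q'(u_c)$ are linearly independent, which is the very hypothesis of the multiplier rule you invoke. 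The standard repair is to note that the multiplier rule makes $u_c$ a weak solution of $-(1-2\mu_c)\Delta u+(1-\mu_c)(|x|^{-1}\ast|u|^2)u-\bigl(1-\tfrac{3(p-2)}{2}\mu_c\bigr)|u|^{p-2}u=\lambda_c u$; when $1-2\mu_c\neq 0$ elliptic regularity applies and the Pohozaev identity for this equation (as in Lemma \ref{pohoz}) yields exactly your relation $\mu_c\bigl(\tfrac{10-3p}{2}A(u_c)+\tfrac{8-3p}{8}B(u_c)\bigr)=0$, hence $\mu_c=0$; the degenerate case $1-2\mu_c=0$ must be excluded separately. None of this is addressed in your write-up, and your algebra, while correct, does not by itself close the gap.

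For comparison, the paper avoids the two-multiplier rule altogether: Lemma \ref{minimizer} shows by a quantitative deformation argument that any $u_c\in\mathcal{M}_c$ is already a critical point of $F$ restricted to the single constraint $S(c)$ — if $\|F'|_{S(c)}(u_c)\|\neq 0$, the scaling path of Lemma \ref{mpestimate} through $u_c$, whose maximum along the path is attained only at $u_c$ by Lemma \ref{growth}(6), can be deformed so its maximum falls strictly below $\gamma(c)$, contradicting the minimax characterization. This buys robustness (no regularity of $u_c$ is needed and no second multiplier ever appears), at the price of the pseudo-gradient machinery. Your treatment of the sign of $\lambda_c$ is correct and is essentially the paper's: your $h'(1)=\lambda_c c\le 0$ is the same computation as $\frac{\partial\alpha(t,\theta)}{\partial t}\big|_{(1,1)}=\tfrac12\lambda_c c$ in Lemma \ref{strictdecrease} combined with the monotonicity of Lemma \ref{prop1}, and your expression for $\langle Q'(u_c),\xi\rangle$ agrees with $\frac{\partial\beta(t,\theta)}{\partial\theta}\big|_{(1,1)}=(5-\tfrac32 p)A(u_c)+(1-\tfrac38 p)B(u_c)$ there. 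Your closing remark that the sign cannot be read off algebraically and that the monotonicity of $\gamma$ is indispensable is accurate and reflects the paper's own structure.
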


Clearly to prove Theorem \ref{naturalconstraint} we need to show that any minimizer of $F(u)$ on $V(c)$ is a critical point of $F(u)$ restricted to $S(c)$, namely that $V(c)$ acts as a natural constraint.
As additional properties of elements of $\mathcal{M}_c$ we have :

\begin{lem}\label{description}
Let $p \in (\frac{10}{3},6)$ and $c>0$ be arbitrary. Then
\begin{enumerate}
  \item [(i)] If $u_c \in \mathcal{M}_c$ then also $|u_c| \in \mathcal{M}_c$ .
  \item [(ii)] Any minimizer $u_c \in\mathcal{M}_c$ has the form $e^{i\theta}|u_c|$ for some $\theta \in \mathbb{S}^1$ and $|u_c(x)| >0 $ a.e. on $\R^3$.
\end{enumerate}
\end{lem}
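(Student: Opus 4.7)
The plan is to prove (i) via the $L^{2}$-preserving scaling $w_t(x):=t^{3/2}w(tx)$ combined with the diamagnetic inequality, and to deduce (ii) from its equality case together with the strong maximum principle. Set $\alpha:=3(p-2)/2$, which satisfies $\alpha>2$ since $p>10/3$, and write $D(w):=\int_{\R^{3}}\!\int_{\R^{3}}|w(x)|^{2}|w(y)|^{2}/|x-y|\,dxdy$. A direct computation yields, for every $w\in S(c)$ and $t>0$, that $w_t\in S(c)$ with
\begin{equation*}
F(w_t)=\frac{t^{2}}{2}\|\nabla w\|_{L^{2}}^{2}+\frac{t}{4}D(w)-\frac{t^{\alpha}}{p}\|w\|_{L^{p}}^{p},\qquad \frac{d}{dt}F(w_t)=\frac{Q(w_t)}{t}.
\end{equation*}
Since $\frac{d}{dt}F(w_t)$ has the form $at+b-ct^{\alpha-1}$ with $a,b,c>0$, it is positive near $t=0$, vanishes at a unique $t^{*}(w)>0$, and is negative afterwards; hence $t\mapsto F(w_t)$ admits a strict global maximum at $t^{*}(w)$, and $w\in V(c)$ if and only if $t^{*}(w)=1$.

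Proof of (i). By the diamagnetic inequality $\|\nabla|w|\|_{L^{2}}\leq\|\nabla w\|_{L^{2}}$, together with the identities $D(|w|)=D(w)$ and $\bigl\|\,|w|\,\bigr\|_{L^{p}}=\|w\|_{L^{p}}$, one has $F(|w|)\leq F(w)$ for every $w\in H^{1}(\R^{3},\C)$. Applied with $w=(u_c)_t$ and using $|(u_c)_t|=|u_c|_t$, this gives $F(|u_c|_t)\leq F((u_c)_t)$ for all $t>0$. Since $u_c\in V(c)$, the right-hand side attains its strict maximum at $t=1$ with value $\gamma(c)$, while $|u_c|_{t^{*}(|u_c|)}\in V(c)$ forces $F(|u_c|_{t^{*}(|u_c|)})\geq\gamma(c)$. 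Chaining these:
\begin{equation*}
\gamma(c)\leq F(|u_c|_{t^{*}(|u_c|)})\leq F((u_c)_{t^{*}(|u_c|)})\leq F(u_c)=\gamma(c),
\end{equation*}
so equality holds throughout; in particular $F((u_c)_{t^{*}(|u_c|)})=F(u_c)$, and the strict maximality of $t\mapsto F((u_c)_t)$ at $t=1$ forces $t^{*}(|u_c|)=1$. Thus $|u_c|\in V(c)$, and combined with $F(|u_c|)\leq F(u_c)=\inf_{V(c)}F$ this gives $|u_c|\in\mathcal{M}_c$ with $F(|u_c|)=F(u_c)$.

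Proof of (ii). The equality $F(|u_c|)=F(u_c)$, together with $D(|u_c|)=D(u_c)$ and $\bigl\|\,|u_c|\,\bigr\|_{L^{p}}=\|u_c\|_{L^{p}}$, forces $\|\nabla|u_c|\|_{L^{2}}=\|\nabla u_c\|_{L^{2}}$. The equality case of the diamagnetic inequality (e.g.\ Lieb--Loss, Theorem~7.21) then yields $u_c(x)=e^{i\theta(x)}|u_c|(x)$ with $\theta$ locally constant on $\{|u_c|>0\}$. By (i) and Theorem~\ref{naturalconstraint} applied to $|u_c|\in\mathcal{M}_c$, there exists $\lambda'\leq 0$ such that
\begin{equation*}
-\Delta|u_c|+\bigl[(|x|^{-1}*|u_c|^{2})-\lambda'\bigr]|u_c|=|u_c|^{p-1};
\end{equation*}
since the bracketed coefficient is nonnegative and $|u_c|\not\equiv 0$, standard elliptic regularity and the strong maximum principle give $|u_c|>0$ everywhere on $\R^{3}$. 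As $\R^{3}$ is connected, $\theta$ is a global constant.

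The main obstacle lies in (i): merely combining $F(|u_c|)\leq F(u_c)$ with $Q(|u_c|)\leq 0$ is not enough to place $|u_c|$ in $V(c)$, since pushing $|u_c|$ back onto $V(c)$ by scaling could \emph{increase} $F$. The decisive ingredient is the pointwise-in-$t$ estimate $F(|u_c|_t)\leq F((u_c)_t)$, which transfers the strict maximality of $t\mapsto F((u_c)_t)$ at $t=1$ to the modulus and pins $t^{*}(|u_c|)$ to $1$.
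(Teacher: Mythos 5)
Your proof is correct, and its skeleton coincides with the paper's: the diamagnetic inequality gives $F(|u_c|)\leq F(u_c)$ and $Q(|u_c|)\leq 0$, the fibering map $t\mapsto F(u^{t})$ of Lemma \ref{growth} is used to rescale $|u_c|$ onto $V(c)$ at some $t_{0}=t^{*}(|u_c|)\leq 1$, one shows $t_{0}=1$, and part (ii) then follows from Theorem \ref{naturalconstraint}, elliptic regularity, the strong maximum principle, and the equality case of the diamagnetic inequality (which the paper delegates to Theorem 4.1 of \cite{HAST}). The one step where you genuinely diverge is the mechanism forcing $t_{0}=1$. You use the fiberwise comparison $F(|u_c|^{t})\leq F(u_c^{t})$ for every $t>0$ together with the \emph{strict} maximality of $t\mapsto F(u_c^{t})$ at $t=1$ (Lemma \ref{growth} (6)), sandwiching $\gamma(c)\leq F(|u_c|^{t_{0}})\leq F(u_c^{t_{0}})\leq F(u_c)=\gamma(c)$. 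The paper instead proves the quantitative bound $F(|u_c|^{t_{0}})\leq t_{0}\,F(u_c)$, exploiting that on $V(c)$ one has $F=\frac{3p-10}{6(p-2)}A+\frac{3p-8}{12(p-2)}B$ and that $A$ and $B$ scale with exponents $2$ and $1$, and then concludes $t_{0}\geq 1$ from $\gamma(c)>0$. Your route avoids invoking the positivity of $\gamma(c)$ and the explicit scaling exponents of $A$ and $B$, so it is marginally more robust; the paper's route yields the inequality $F(|u_c|^{t_{0}})\leq t_{0}F(u_c)$, which is of some independent interest. Both arguments are complete and finish part (ii) identically. (A cosmetic remark only: your $D(\cdot)$ denotes the Coulomb double integral, which the paper calls $B(\cdot)$, reserving $D(\cdot)$ for the squared $L^{2}$-norm.)
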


In view of Lemma \ref{description} each elements of $\mathcal{M}_c$ is a real positive function multiply by a constant complex factor.

Concerning the dynamics we first consider the question of global existence of solutions for the Cauchy problem.  In the case $p \in (2, \frac{10}{3})$ global existence in time is guaranteed for initial data in $H^1(\R^3)$, see for instance \cite{TC}. In the case $p \in (2, \frac{10}{3})$ the standing waves found in \cite{BS1}, \cite{BS}, \cite{SS} by minimization are orbitally stable. This is proved following the approach of Cazenave-Lions \cite{CL}.
In the case $p \in (\frac{10}{3}, 6)$ the global existence in time of solutions for the Cauchy problem associated to \eqref{main} does not hold for arbitrary initial condition. However we are able to prove the following global existence result.

\begin{thm}\label{global}
Let $p \in (\frac{10}{3},6)$ and $u_0 \in H^1(\mathbb{R}^{3},\mathbb{C})$ be an initial condition associated to \eqref{main} with $c=||u_0||_2^2$. If
$$Q(u_0)>0 \text{  and } F(u_0)<\gamma(c),$$
then  the solution of \eqref{main} with initial condition $u_0$ exists globally in times.
\end{thm}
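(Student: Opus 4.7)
The plan is to combine local well-posedness with the two conservation laws and the variational identity $\gamma(c)=\inf_{V(c)}F$ from \eqref{mini}, propagating the sign condition $Q(u(t))>0$ along the flow and then extracting a uniform $H^{1}$-bound from it. Standard local theory for \eqref{main} in $H^{1}(\R^{3},\mathbb{C})$ with $p\in(2,6)$ (see \cite{TC}) provides a maximal solution $u\in C([0,T^{*}),H^{1})$ for which both the mass $\|u(t)\|_{2}^{2}=c$ and the energy $F(u(t))=F(u_{0})$ are conserved; the standard blow-up alternative reduces global existence to a uniform a priori bound on $\|\nabla u(t)\|_{2}$ on $[0,T^{*})$.

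Next I would show that $Q(u(t))>0$ persists on $[0,T^{*})$. The map $t\mapsto Q(u(t))$ is continuous with $Q(u_{0})>0$. If it vanished at some $t_{1}\in(0,T^{*})$, mass conservation would put $u(t_{1})\in V(c)$, so $F(u(t_{1}))\geq\gamma(c)$ by \eqref{mini}; but energy conservation gives $F(u(t_{1}))=F(u_{0})<\gamma(c)$, a contradiction.

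Finally, from $Q(u(t))>0$ one has $\frac{1}{p}\int_{\R^{3}}|u(t)|^{p}\,dx<\frac{2}{3(p-2)}\bigl(\|\nabla u(t)\|_{2}^{2}+\frac{1}{4}D(u(t))\bigr)$, with $D(v)$ denoting the Coulomb term; substituting into $F(u(t))=F(u_{0})$ yields
\[
F(u_{0})\;>\;\Bigl(\frac{1}{2}-\frac{2}{3(p-2)}\Bigr)\|\nabla u(t)\|_{2}^{2}+\Bigl(\frac{1}{4}-\frac{1}{6(p-2)}\Bigr)D(u(t)).
\]
For $p>\frac{10}{3}$ one has $3(p-2)>4$, so both coefficients are strictly positive; discarding the nonnegative nonlocal term, $\|\nabla u(t)\|_{2}^{2}$ is controlled by a multiple of $F(u_{0})$. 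Combined with mass conservation this is the required uniform $H^{1}$-bound, and the blow-up alternative then forces $T^{*}=+\infty$.

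The main obstacle is the sign-propagation step: the whole argument hinges on \eqref{mini} acting as an energy barrier separating $\{Q>0\}$ from $\{Q\leq 0\}$ inside $S(c)$; without this characterization one could not prevent $Q(u(t))$ from changing sign, and the $L^{p}$ term could in principle drive the solution to blow up even below the energy level $\gamma(c)$.
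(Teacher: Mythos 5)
Your proof is correct and rests on exactly the same two ingredients as the paper's argument --- the identity $F(u)-\frac{2}{3(p-2)}Q(u)=\frac{3p-10}{6(p-2)}A(u)+\frac{3p-8}{12(p-2)}B(u)$ together with the characterization $\gamma(c)=\inf_{V(c)}F$ from \eqref{mini} --- merely reorganized: you propagate $Q(u(t))>0$ first and then read off the a priori bound on $\|\nabla u(t)\|_2^2$, whereas the paper assumes blow-up, deduces $Q(u(t))\to-\infty$ from the same identity, and locates a zero of $Q$ to contradict \eqref{mini}. The only cosmetic caveat is that you write $D$ for the Coulomb term, which the paper denotes $B$ (reserving $D$ for the squared $L^2$-norm).
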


In Remark \ref{rem1.3'}  we prove that the set
$$\mathcal{O}= \{u_0\in S(c)\ :\ Q(u_0)>0 \mbox{ and } F(u_0)<\gamma(c)  \}$$
is not empty.

Next we prove that the standing waves corresponding to elements of $\mathcal{M}_c$ are unstable in the following sense.
\begin{definition}
A standing wave $e^{i\omega t}v(x)$ is strongly unstable if for any $\varepsilon >0$ there exists $u_0 \in H^1(\mathbb{R}^{3},\mathbb{C})$ such that $\left \| u_0-v \right \|_{H^1}< \varepsilon$ and the solution $u(t,\cdot)$ of the equation  \eqref{main} with $u(0, \cdot)=u_0$ blows up in a finite time.
\end{definition}

\begin{thm}\label{th2.1}
Let $p \in (\frac{10}{3}, 6)$ and $c>0$. For each $u_c \in \mathcal{M}_c$ the standing wave $e^{-i \lambda_c t}u_c$ of (\ref{main}), where $\lambda_c \in \R $ is the Lagrange multiplier, is strongly unstable.
\end{thm}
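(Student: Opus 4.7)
The plan is to produce, for each $\varepsilon > 0$, an initial datum $u_0 \in H^1(\mathbb{R}^3)$ with $\|u_0 - u_c\|_{H^1} < \varepsilon$ whose solution under \eqref{main} blows up in finite time. The method is the classical virial--Glassey argument of Berestycki--Cazenave, adapted to the constrained mountain-pass setting: the initial datum will sit in the region $\{F < \gamma(c),\ Q < 0\} \cap S(c)$, and an energy-trapping estimate will force the variance $V(t) := \int_{\R^3}|x|^2|u(t,x)|^2\,dx$ to satisfy $V''(t) = 8 Q(u(t,\cdot)) \le -8d$ for a fixed $d>0$, leading to finite-time blow-up.

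The construction uses the mass-preserving dilation $v^\theta(x) := \theta^{3/2}v(\theta x)$, which leaves the $L^2$-norm invariant. Writing $\alpha := 3(p-2)/2 > 2$, the fiber map
\[
\phi_v(\theta) := F(v^\theta) = \tfrac{\theta^2}{2}\|\nabla v\|_{L^2}^2 + \tfrac{\theta}{4}\iint_{\R^3\times\R^3}\frac{|v(x)|^2|v(y)|^2}{|x-y|}dx\,dy - \tfrac{\theta^\alpha}{p}\|v\|_{L^p}^p
\]
satisfies $\theta\phi_v'(\theta) = Q(v^\theta)$, and since $\phi_v''$ is strictly decreasing on $(0,\infty)$ when $\alpha>2$, $\phi_v$ has a unique critical point, which is a global maximum; call it $\theta_v$. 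For $v=u_c\in V(c)$ we have $\theta_{u_c}=1$, so for every $\theta>1$ one obtains $F(u_c^\theta)<F(u_c)=\gamma(c)$ and $Q(u_c^\theta)<0$, while $u_c^\theta\to u_c$ in $H^1$ as $\theta\to 1$. By Theorem \ref{th3.1}, $u_c$ decays exponentially, hence $u_c\in\Sigma:=\{u\in H^1(\R^3):|x|u\in L^2(\R^3)\}$ and $u_0:=u_c^\theta\in\Sigma$ whenever $\theta>1$. The Cauchy problem for \eqref{main} is locally well-posed in $H^1$, preserves $\Sigma$, and the variance is a $C^2$ function of $t$ satisfying the virial identity $V''(t)=8Q(u(t,\cdot))$ (the repulsive Coulombic term contributes $2\iint|u(x)|^2|u(y)|^2|x-y|^{-1}\,dx\,dy$ to $V''$, which together with the kinetic and pure-power contributions assembles exactly into $8Q$).

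Conservation of mass and energy give $\|u(t)\|_2^2=c$ and $F(u(t,\cdot))=F(u_0)=:E_0<\gamma(c)$; set $d:=\gamma(c)-E_0>0$. The heart of the argument is the trapping estimate: \emph{any $v\in H^1(\R^3)$ with $\|v\|_2^2=c$, $F(v)\le E_0$ and $Q(v)<0$ satisfies $Q(v)\le-d$.} Indeed, for such a $v$, $\phi_v'(1)<0$ forces $\theta_v\in(0,1)$; then $v^{\theta_v}\in V(c)$, so $\phi_v(\theta_v)\ge\gamma(c)$ by \eqref{mini}. Since $\phi_v''$ is decreasing and $\phi_v''(\theta_v)\le 0$ (at a maximum), $\phi_v''\le 0$ on $[\theta_v,1]$; thus $\phi_v'$ is decreasing there and $\phi_v'(s)\ge\phi_v'(1)=Q(v)$ for all $s\in[\theta_v,1]$, which yields
\[
d\le\gamma(c)-F(v)\le\phi_v(\theta_v)-\phi_v(1)=-\int_{\theta_v}^1\phi_v'(s)\,ds\le(1-\theta_v)\bigl(-Q(v)\bigr)\le -Q(v).
\]

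It remains to verify that $Q(u(t,\cdot))$ stays negative on the maximal existence interval $[0,T)$: a continuity argument rules out $Q(u(t,\cdot))=0$, for such an equality would give $u(t,\cdot)\in V(c)$ and hence $F(u(t,\cdot))\ge\gamma(c)$, contradicting $F(u(t,\cdot))=E_0$. Applying the trapping estimate to $v=u(t,\cdot)$ yields $V''(t)\le-8d$ for all $t\in[0,T)$, so $V$ would become negative in finite time if $T=+\infty$; since $V\ge 0$, necessarily $T<\infty$ and the $H^1$-norm blows up. Letting $\theta\to 1^+$ produces blow-up initial data arbitrarily close to $u_c$ in $H^1$, and the phase invariance described in Lemma \ref{description} handles the whole standing-wave orbit. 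The principal obstacles are the trapping estimate, which hinges crucially on $\alpha>2$ (i.e.\ $p>10/3$) to secure the monotonicity of $\phi_v''$, and the finite-variance property of $u_c$, for which Theorem \ref{th3.1} is indispensable.
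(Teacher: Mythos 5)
Your proposal is correct and follows essentially the same route as the paper: perturb $u_c$ along the mass-preserving dilation $u_c^\theta$ with $\theta>1$ to enter $\{F<\gamma(c),\,Q<0\}\cap S(c)$, show this set is invariant under the flow by continuity and the characterization $\gamma(c)=\inf_{V(c)}F$, derive the uniform bound $Q(u(t))\le -d$ from the concavity of the fiber map $\theta\mapsto F(v^\theta)$ past its maximum, and conclude via the virial identity, with Theorem \ref{th3.1} supplying the finite variance of $u_c^\theta$. (You even have the correct sign of the dilation parameter, $\theta>1$, where the paper's text misprints $\lambda<1$; your integral form of the concavity estimate is equivalent to the paper's tangent-line inequality.)
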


\begin{remark}
The proof of Theorem \ref{th2.1} borrows elements of the original approach of Berestycki and Cazenave \cite{BECA}. The starting point is the variational characterization of  $u_c \in \mathcal{M}_c $  and the decay estimates established in Theorem \ref{th3.1} proves crucial to use the virial identity.
\end{remark}

\begin{remark}
For previous results concerning the instability of standing waves of (\ref{main}) we refer to \cite{HK} (see also \cite{HK2}). In \cite{HK}, working in the subspace of radially symmetric functions, it is proved that for $\lambda < 0$ fixed and $p \in (\frac{10}{3}, 6)$ the equation (\ref{eq}) admits a ground state which is strongly unstable. However when we work in all $H^1(\R^3)$ it is still not known if ground states, or at least one of them, are radially symmetric. In that direction we are only aware of the result of \cite{GEPRVI} which gives a positive answer when $p \in (2,3)$ and for $c>0$ sufficiently small. In this range the critical point is found as a minimizer of $F(u)$ on $S(c)$.
\end{remark}

Finally we prove
\begin{thm}\label{cor1.1}
Let $p \in (\frac{10}{3},6)$. Any ground state of (\ref{eqfree}) is strongly unstable.
\end{thm}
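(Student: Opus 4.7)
Let $v$ be a ground state of (\ref{eqfree}). The plan is to identify $v$ with an element of $\mathcal{M}_c$ for $c := \|v\|_2^2$, so that Theorem \ref{th2.1} applies verbatim. Because $v$ solves (\ref{eqfree}) the associated Lagrange multiplier is $\lambda_c = 0$, so the standing wave $e^{-i\lambda_c t}v$ reduces to the stationary solution $v$ itself, and strong instability of $v$ is then exactly the desired conclusion.

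First, Theorem \ref{th3.1} applied at $\lambda=0$ yields $v \in H^1(\R^3)$ together with the exponential decay at infinity, so in particular $|x|v \in L^2(\R^3)$; this integrability of $\int|x|^2|u|^2\,dx$ is needed to run the virial argument inside the proof of Theorem \ref{th2.1}. Next, the Pohozaev identity for (\ref{eqfree}), combined with the identity obtained by testing against $v$ itself, is precisely the statement $Q(v)=0$. Hence $v \in V(c)$ and $F(v) \geq \inf_{V(c)} F = \gamma(c)$.

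For the reverse inequality, I would invoke the characterization from \cite{IARU}: a ground state of (\ref{eqfree}) realizes the infimum of $F$ on the Pohozaev manifold
$$\mathcal{P}_E := \{u \in E \setminus \{0\} : Q(u) = 0\}.$$
Since $V(c) \subset \mathcal{P}_E$ one has $F(v) \leq \inf_{V(c)} F = \gamma(c)$. Combining the two bounds gives $F(v) = \gamma(c)$, so $v \in \mathcal{M}_c$, and Theorem \ref{th2.1} produces a sequence $u_0^n \to v$ in $H^1(\R^3)$ whose associated solutions of (\ref{main}) blow up in finite time.

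The main obstacle is precisely this last bridge, namely showing $F(v) = \gamma(c)$. If the ground state in \cite{IARU} is defined only as a least-action nontrivial critical point of $F$ on $E$ rather than explicitly as a $\mathcal{P}_E$-minimizer, the required identity is restored by a standard two-step argument: the Pohozaev identity forces every nontrivial critical point of $F$ onto $\mathcal{P}_E$, and a dilation-based Lagrange multiplier computation forces every $\mathcal{P}_E$-minimizer to be an actual critical point of $F$ on $E$, so the two infima coincide and the chain of inequalities above is unaffected.
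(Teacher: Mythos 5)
Your overall strategy is the right one and, for the first half, coincides with the paper's: Theorem \ref{th3.1} gives $v\in H^1(\R^3)$ with exponential decay (so the virial identity in Theorem \ref{th2.1} is usable), Lemma \ref{pohoz} gives $Q(v)=0$, hence $v\in V(c)$ and $F(v)\geq\gamma(c)$. Identifying $\lambda_c=0$ and reducing the standing wave to the stationary solution $v$ is also fine. The weak point is exactly where you say it is: the reverse inequality $F(v)\leq\gamma(c)$. You derive it from the claim that a ground state of (\ref{eqfree}) minimizes $F$ over the full Pohozaev manifold $\mathcal{P}_E=\{u\in E\setminus\{0\}:Q(u)=0\}$. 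That characterization is not among the facts the paper imports from \cite{IARU} (what is used is only that $F$ has a least energy solution in $E$ whose energy equals the mountain pass level $m$ of (\ref{ruizlevel})). Your proposed repair has a genuine hole: the two-step argument (critical points lie on $\mathcal{P}_E$; $\mathcal{P}_E$-minimizers are critical points) only yields $F(v)\leq\inf_{\mathcal{P}_E}F$ if the infimum of $F$ on $\mathcal{P}_E$ is \emph{attained} by some $w$, so that the least-energy property gives $F(v)\leq F(w)$. Attainment of this constrained infimum in the zero-mass space $E$ is a nontrivial compactness statement that you neither prove nor cite; note also that the trivial inclusion of critical points in $\mathcal{P}_E$ only gives the useless direction $\inf_{\mathcal{P}_E}F\leq F(v)$.

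The paper closes this bridge differently and without any attainment issue: Lemma \ref{boundbelow} shows $\gamma(c)\geq m$ for every $c>0$ by taking an arbitrary path $g\in\Gamma_c$ and concatenating it with a segment from $0$ through a rescaled element of $A_{K_c}$, producing a path in $\Gamma$ with the same maximum. Combined with $F(v)\geq\gamma(c_0)$ from the first half and $m=F(v)$ from \cite{IARU}, this yields $F(v)\geq\gamma(c_0)\geq m=F(v)$, hence $v\in\mathcal{M}_{c_0}$, and Theorem \ref{th2.1} finishes the proof. In effect, the inequality you need for each $w\in V(c)$, namely $m\leq F(w)=\max_{t>0}F(w^t)$, is obtained by a direct path construction rather than by a minimization-plus-natural-constraint argument on $\mathcal{P}_E$; you should replace your last paragraph by an appeal to (or a reproof of) Lemma \ref{boundbelow}.
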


\begin{remark}
In the {\it zero mass case} there seems to be few results of stability/instability of standing waves. We are only aware of \cite{KaOh} for a stability result.
\end{remark}

The paper is organized as follows. In Section \ref{Section2} we establish the mountain pass geometry of $F(u)$ on $S(c)$. In Section \ref{Section3} we construct the special bounded Palais-Smale sequence at the level $\gamma(c)$. In Section \ref{Section4} we show the convergence of the Palais-Smale sequence and we conclude the proof of Theorem \ref{mainvero}. In Section \ref{Section5} some parts of Theorem \ref{maingamma} are established. In Section \ref{Section51} we prove
Theorem \ref{naturalconstraint} and Lemma \ref{description}. In Section \ref{Section6} we prove Theorem \ref{th3.1} and using elements from \cite{IARU} we end the proof of Theorem \ref{maingamma}.  Section \ref{Section8} is devoted to the proof of Theorems \ref{global}, \ref{th2.1} and \ref{cor1.1}. Finally in Section \ref{Section7} we discuss the nonlinear Schr\" odinger equation case. \medskip

\textbf{Acknowledgement:} The authors thanks Professor Louis Dupaigne for providing to them a proof of Theorem \ref{th3.1} in the case $\lambda =0$. We also thanks Professor Masahito Ohta for pointing to us the interest of studying the stability/instability of the ground states of (\ref{eqfree}).



\subsection{Notations}
In the paper it is understood that all functions,
unless otherwise stated, are
complex-valued, but for simplicity we write $L^{s}(\R^{3}), H^{1}(\R^{3}) ....$,  and for any $1\leq s < +\infty$, $L^s(\R^3)$ is the usual Lebesgue space endowed
with the norm
$$\|u\|_{s}^s:=\int_{\R^{3}} |u|^sdx,$$
and
$H^1(\R^3)$ the usual Sobolev space  endowed with the norm
$$\|u\|^2:=\int_{\R^{3}} |\nabla u|^2dx+\int_{\R^{3}} |u|^2dx.$$
Moreover we define, for short,
the following quantities
$$A(u):=\int_{\R^{3}} |\nabla u|^{2}dx,\ \ \ B(u):=\int_{\mathbb{R}^3}\int_{\mathbb{R}^3}\frac{\left | u(x) \right |^2\left | u(y) \right |^2}{\left | x-y \right |}dxdy$$
$$ \ C(u):=-\int_{\R^{3}} |u|^{p}dx, \ \ \ D(u):=\int_{\mathbb{R}^3}\left | u\right |^2dx. $$
Then
\begin{equation}\label{}
Q(u)=A(u)+\frac{1}{4}B(u)+\frac{3(p-2)}{2p}C(u ).
\end{equation}

\section{The mountain pass geometry on the constraint}\label{Section2}
In this section, we discuss the Mountain Pass Geometry (``MP Geometry" for short) of the functional $F(u)$ on the $ L^2$-constraint $S(c)$.
We show the following:

\begin{thm}\label{thm1.2}
When $p \in (\frac{10}{3},6)$, for any $c >0$, $F(u)$ has a MP geometry on the constraint $S(c)$.
\end{thm}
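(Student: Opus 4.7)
The plan is to exploit the $L^{2}$-preserving scaling $u_{t}(x):=t^{3/2}u(tx)$ together with the Gagliardo--Nirenberg inequality to split $S(c)$ into a low-gradient region where $F$ is small and positive, a high-gradient region where $F$ can be made negative, and an energy barrier in between. A direct computation gives $A(u_{t})=t^{2}A(u)$, $B(u_{t})=t\,B(u)$ and $\|u_{t}\|_{p}^{p}=t^{3(p-2)/2}\|u\|_{p}^{p}$, whence
\begin{equation*}
F(u_{t})=\tfrac{t^{2}}{2}A(u)+\tfrac{t}{4}B(u)-\tfrac{t^{3(p-2)/2}}{p}\|u\|_{p}^{p}.
\end{equation*}
Since $p>\tfrac{10}{3}$ gives $3(p-2)/2>2$, the last term dominates for large $t$ and $F(u_{t})\to -\infty$. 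Because $t\mapsto u_{t}$ is continuous from $(0,\infty)$ into $S(c)$, for any fixed $u\in S(c)$ this curve provides continuous paths in $S(c)$ that reach $\{F<0\}$; this will guarantee $\Gamma_{c}\neq\emptyset$ once $K_{c}$ is fixed.

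Next I would derive a pointwise lower bound on $F$ depending only on $A(u)$. The Gagliardo--Nirenberg inequality in $\R^{3}$ with exponent $\theta=3(p-2)/(2p)$ yields $\|u\|_{p}^{p}\le C_{GN}\,c^{(6-p)/4}A(u)^{3(p-2)/4}$ for all $u\in S(c)$; discarding the non-negative term $B(u)/4$ gives
\begin{equation*}
F(u)\ge \frac{A(u)}{2}-\frac{C_{GN}}{p}\,c^{(6-p)/4}\,A(u)^{3(p-2)/4}=:h(A(u)).
\end{equation*}
The hypothesis $p>\tfrac{10}{3}$ is precisely what makes the exponent $3(p-2)/4$ strictly larger than $1$, so $h\colon[0,\infty)\to\R$ satisfies $h(0)=0$, attains a strictly positive maximum at a unique point $a^{*}=a^{*}(c)>0$, has a unique second zero $a^{**}>a^{*}$, and decreases to $-\infty$ beyond $a^{**}$. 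In particular, $F(u)<0$ with $u\in S(c)$ forces $A(u)>a^{**}$.

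I would then choose $K_{c}\in(0,a^{*})$ small enough that the elementary bound
\begin{equation*}
F(u)\le \frac{A(u)}{2}+\frac{B(u)}{4}\le \frac{K_{c}}{2}+\frac{\tilde C}{4}\,c^{3/2}\,K_{c}^{1/2}\qquad (u\in A_{K_{c}}),
\end{equation*}
obtained by applying Hardy--Littlewood--Sobolev together with Gagliardo--Nirenberg at exponent $12/5$ to estimate $B(u)$, is strictly less than $h(a^{*})$. This is possible because the right hand side tends to $0$ as $K_{c}\to 0$ while $h(a^{*})>0$ is fixed. With this $K_{c}$, the mountain pass property follows from the intermediate value theorem: for every $g\in\Gamma_{c}$ the continuous map $t\mapsto A(g(t))$ satisfies $A(g(0))\le K_{c}<a^{*}<a^{**}<A(g(1))$, so there exists $t^{*}\in(0,1)$ with $A(g(t^{*}))=a^{*}$, and hence
\begin{equation*}
\max_{t\in[0,1]}F(g(t))\ge F(g(t^{*}))\ge h(a^{*})>\sup_{u\in A_{K_{c}}}F(u)\ge \max\{F(g(0)),F(g(1))\},
\end{equation*}
the last inequality using that $F\ge 0$ on $A_{K_{c}}$ (since $h\ge 0$ on $[0,a^{**}]$) and that $F(g(1))<0$. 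Taking the infimum over $g$ gives $\gamma(c)\ge h(a^{*})$ and the strict MP inequality.

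The argument is mostly bookkeeping once the scaling identities are in hand; the only delicate point is arranging the two conditions $K_{c}<a^{*}$ and $\sup_{A_{K_{c}}}F<h(a^{*})$ simultaneously, which is made possible by the superlinear behaviour of $h$ at the origin -- exactly the content of the assumption $p>\tfrac{10}{3}$.
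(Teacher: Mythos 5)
Your argument is correct and follows essentially the same route as the paper's proof: the Gagliardo--Nirenberg lower bound $F(u)\ge \frac12 A(u)-C(p)A(u)^{3(p-2)/4}c^{(6-p)/4}$ creating a positive barrier at a fixed gradient level, the Hardy--Littlewood--Sobolev plus Gagliardo--Nirenberg upper bound $F(u)\le \frac12 A(u)+\tilde C A(u)^{1/2}c^{3/2}$ forcing $F$ to be small on $A_{K_c}$, the scaling $t\mapsto u^t$ to show $\Gamma_c\neq\emptyset$, and the crossing argument in the variable $A(g(t))$. Your presentation via the explicit function $h$ and its zeros $a^*<a^{**}$ is just a slightly more detailed bookkeeping of the paper's quantities $\alpha_{k_1}$ and $\beta_{k_2}$.
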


Before proving Theorem \ref{thm1.2} we establish some lemmas. We first introduce the Cazenave's scaling \cite{TC}. For $u\in S(c)$, we set $u^t(x)=t^{\frac{3}{2}}u(tx),t>0$, then
\begin{eqnarray*}
A(u^t)=t^2A(u)\ , \ D(u^t)=D(u),
\end{eqnarray*}
and
\begin{eqnarray*}
  B(u^t)= t B(u)\ , \ C(u^t) = t^{\frac{3}{2}(p-2)} C(u).
\end{eqnarray*}
Thus
\begin{eqnarray}\label{1.1}
F(u^t)=\frac{t^2}{2}A(u)+\frac{t}{4}B(u)+\frac{t^{\frac{3}{2}(p-2)}}{p}C(u).
\end{eqnarray}

\begin{lem}\label{lm1.1}
Let $u \in S(c)$, $c >0$ be arbitrary but fixed and $p \in (\frac{10}{3}, 6)$, then:
\begin{enumerate}
 \item   $A(u^t) \to \infty$ and
        $F(u^t)  \to -\infty$, as $t \to \infty$.
 \item  There exists $k_0 >0$ such that $Q(u)>0$ if $||\nabla u||_2\leq k_0$ and $-C(u) \geq k_0$ if $Q(u)=0$.
   \item If $F(u)<0$ then $Q(u)<0.$
\end{enumerate}
\end{lem}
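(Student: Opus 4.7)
My plan is to prove all three items by exploiting that, for $p \in (10/3,6)$, the scaling exponent $\sigma := 3(p-2)/2$ associated with the $L^p$ term satisfies $\sigma > 2$ (the mass-supercritical regime), together with the Gagliardo--Nirenberg inequality $\|u\|_p^p \leq K \|u\|_2^{p-\sigma} A(u)^{\sigma/2}$ valid on $H^1(\R^3)$. Item (1) will then follow immediately from \eqref{1.1}: that formula reads
\begin{equation*}
F(u^t) = \tfrac{t^2}{2}A(u) + \tfrac{t}{4}B(u) + \tfrac{t^\sigma}{p}C(u),
\end{equation*}
with $C(u) < 0$ and $\sigma > 2$, so $A(u^t) = t^2 A(u) \to +\infty$ and the negative $t^\sigma$ contribution eventually dominates the lower-order positive terms, yielding $F(u^t) \to -\infty$.

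For (2), evaluating Gagliardo--Nirenberg on $u \in S(c)$ gives $-C(u) \leq \tilde K(c) A(u)^{\sigma/2}$ with $\tilde K(c) := K c^{(p-\sigma)/2}$. Since $B(u) \geq 0$, substituting into the definition of $Q$ produces
\begin{equation*}
Q(u) \;\geq\; A(u)\left(1 - \tfrac{3(p-2)}{2p}\tilde K(c)\, A(u)^{\sigma/2 - 1}\right),
\end{equation*}
which is strictly positive as soon as $A(u)$ lies below a suitable threshold; this supplies a first candidate for $k_0$. For the second half of (2), assuming $Q(u) = 0$ I will combine the resulting identity $A(u) + \tfrac14 B(u) = \tfrac{3(p-2)}{2p}(-C(u))$ with Gagliardo--Nirenberg to obtain $A(u) \leq \tfrac{3(p-2)}{2p}\tilde K(c)\, A(u)^{\sigma/2}$. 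Because $\sigma/2 > 1$, this forces $A(u)$ to be bounded below by a positive constant depending only on $c$, whence $-C(u) \geq \tfrac{2p}{3(p-2)}A(u)$ is bounded below as well. Taking $k_0$ small enough to serve simultaneously as the gradient-threshold in the first assertion and as the lower bound in the second completes (2).

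For (3), the hypothesis $F(u) < 0$ can be rewritten as $-C(u) > \tfrac{p}{2}A(u) + \tfrac{p}{4}B(u)$, and hence
\begin{equation*}
\tfrac{3(p-2)}{2p}(-C(u)) \;>\; \tfrac{3(p-2)}{4}A(u) + \tfrac{3(p-2)}{8}B(u) \;>\; A(u) + \tfrac14 B(u),
\end{equation*}
the final inequality using $p > 10/3$, which gives both $\tfrac{3(p-2)}{4} > 1$ and $\tfrac{3(p-2)}{8} > \tfrac14$; rearranging yields $Q(u) < 0$. The one step requiring genuine care is the second half of (2), where I must combine two uses of Gagliardo--Nirenberg and then select a single $k_0$ compatible with both conclusions; items (1) and (3) are otherwise essentially direct consequences of the supercritical scaling $\sigma > 2$.
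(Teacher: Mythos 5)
Your proposal is correct and follows essentially the same route as the paper: the scaling formula \eqref{1.1} for item (1), the Gagliardo--Nirenberg inequality with exponent $\tfrac{3(p-2)}{4}>1$ for item (2), and a coefficient comparison between $F$ and $Q$ for item (3), your version of which is just the paper's identity \eqref{sisi2} rearranged. The only (welcome) difference is that in the second half of (2) you make explicit, via a second application of Gagliardo--Nirenberg on the set $\{Q=0\}$, the positive lower bound on $A(u)$ that the paper leaves implicit before concluding $-C(u)\geq \tfrac{2p}{3(p-2)}A(u)\geq k_0$.
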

\begin{proof}
We notice that
\begin{equation}\label{sisi2}
F(u)-\frac{2}{3(p-2)}Q(u)=\frac{3p-10}{6(p-2)}A(u)+\frac{3p-8}{12(p-2)}B(u).
\end{equation}
Thus (3) holds since the RHS is always positive. Moreover, thanks to Gagliardo-Nirenberg inequality there exists a constant $C(p)>0$ such that
$$Q(u) \geq A(v)-C(p) A(u)^{\frac{3(p-2)}{4}}D(v)^{\frac{6-p}{4}}.$$
The fact that $\frac{3(p-2)}{4}>1$ insures that $Q(u)>0$ for sufficiently small $A(u)$. Also when $Q(u)=0$
$$ - C(u) = \frac{2p}{3(p-2)} [ A(u) + \frac{1}{4} B(u)] \geq \frac{2p}{3(p-2)} A(u)$$
and this ends the proof of (2). Finally (1) follows directly from (\ref{1.1}) and since $A(u^t) = t^2 A(u).$
\end{proof}
Our next lemma is inspired by Lemma 8.2.5 in \cite{TC}.

\begin{lem}\label{growth}
When $p \in (\frac{10}{3}, 6)$, given $u \in S(c)$ we have: \\
(1)\ There exists a unique $t ^{\star}(u)>0$, such that $u^{t ^{\star}} \in V(c)$;\\
(2)\ The mapping $t \longmapsto F(u^{t})$ is concave on $[t ^{\star}, \infty)$;\\
(3)\ $t ^{\star}(u)<1$ if and only if $Q(u)<0$;\\
(4)\ $t ^{\star}(u)=1$ if and only if $Q(u)=0$;\\
(5)\ $$Q(u^t)\left\{\begin{matrix}
\ >0,\ \forall\ t &\in& (0,t^*(u));\\
\ <0, \ \forall\ t&\in& (t^*(u),+\infty).
\end{matrix}\right.$$
(6)\ $F(u^{t})<F(u^{t ^{\star}})$, for any $t>0$ and $t \neq t ^{\star}$;\\
(7)\ $\frac{\partial}{\partial t} F(u^{t})=\frac{1}{t}Q(u^{t})$, $\forall t >0$.
\end{lem}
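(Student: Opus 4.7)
The plan is to reduce every assertion to the analysis of the single real function $h(t) := F(u^t)$ for $t > 0$. By \eqref{1.1},
\begin{equation*}
h(t) = \frac{t^2}{2} A(u) + \frac{t}{4} B(u) + \frac{t^{\alpha}}{p} C(u), \qquad \alpha := \frac{3(p-2)}{2},
\end{equation*}
where $\alpha > 2$ since $p > \tfrac{10}{3}$, and $A(u), B(u) > 0$, $C(u) < 0$ for $u \in S(c)$. Item (7) is immediate: differentiating $h$ term by term and comparing with the scaling formula for $Q$ gives exactly $h'(t) = \tfrac{1}{t} Q(u^t)$.

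The backbone of the remaining items is the shape of $h'$. I will compute
\begin{equation*}
h''(t) = A(u) + \frac{\alpha(\alpha-1)}{p}\, t^{\alpha-2}\, C(u),
\end{equation*}
and observe that, since $\alpha > 2$ and $C(u) < 0$, the map $t \mapsto h''(t)$ is strictly decreasing, strictly positive at $t = 0^+$, and tends to $-\infty$ as $t \to \infty$. Hence $h''$ vanishes at a unique $t_0 > 0$, and so $h'$ is strictly increasing on $(0,t_0)$ and strictly decreasing on $(t_0,\infty)$, with $h'(0^+) = B(u)/4 > 0$ and $h'(t) \to -\infty$. This unimodal shape forces $h'$ to have exactly one zero $t^\star \in (t_0,\infty)$, and via (7) this translates into $Q(u^{t^\star}) = 0$, establishing (1).

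Items (3)-(5) then drop out of the sign analysis of $h'$, since $Q(u^t) = t\, h'(t)$: the quantity $Q(u^t)$ is positive on $(0,t^\star)$, zero at $t^\star$, and negative on $(t^\star,\infty)$, which is (5); setting $t = 1$ gives $Q(u) < 0 \Leftrightarrow t^\star < 1$ and $Q(u) = 0 \Leftrightarrow t^\star = 1$, which are (3) and (4). Item (6) is immediate from the same unimodality: $t^\star$ is the strict global maximum of $h$. For (2), note $t_0 < t^\star$, because $h'(t_0)$ is the maximum of $h'$ and exceeds $h'(0^+) = B(u)/4 > 0 = h'(t^\star)$; hence $h'' < 0$ on $[t^\star,\infty)$ and $h$ is strictly concave there.

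The one nontrivial point, which I expect to be the main obstacle, is securing the uniqueness of $t^\star$ by showing that $h'$ changes sign exactly once on $(0,\infty)$. Everything hinges on $\alpha > 2$, equivalent to $p > \tfrac{10}{3}$, which is exactly what makes $h''$ strictly monotone with a single root; outside this supercritical range the ordering of the three competing powers of $t$ in $h(t)$ breaks down and no such universal $t^\star$ need exist.
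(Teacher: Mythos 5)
Your proposal is correct and follows essentially the same route as the paper: the paper's auxiliary function $y(t)$ is exactly your $h'(t)$, and both arguments rest on showing that $h''$ (the paper's $y'$) is strictly decreasing from $A(u)>0$ to $-\infty$ because $\frac{3}{2}(p-2)>2$, so that $h'$ is unimodal with $h'(0^+)=B(u)/4>0$ and $h'\to-\infty$, forcing a unique sign change at $t^\star$. All seven items are then read off from this shape exactly as in the paper.
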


\begin{proof}
 Since $$F(u^{t})=\frac{t^2}{2}A(u)+\frac{t}{4}B(u)+\frac{t^{\frac{3}{2}(p-2)}}{p}C(u)$$
we have that
$$ \frac{\partial}{\partial t} F(u^{t}) = t A(u)+ \frac{1}{4}B(u)+\frac{3(p-2)}{2p}t^{\frac{3}{2}(p-2)-1}C(u)
= \frac{1}{t}Q(u^{t})$$
and this proves (7). Now we denote
$$y(t)= t A(u) + \frac{1}{4}B(u)+\frac{3(p-2)}{2p}t^{\frac{3}{2}(p-2)-1}C(u),
$$
and observe that $Q(u^{t})= t \cdot y(t)$. After direct calculations, we  see that:
\begin{eqnarray*}
y'(t)&=& A(u) +\frac{3(p-2)(3p-8)}{4p}t^{\frac{3p-10}{2}}C(u); \\
y''(y)&=&  \frac{3(p-2)(3p-8)}{4p}\cdot \frac{3p-10}{2}\cdot t^{\frac{3p-12}{2}}C(u).
\end{eqnarray*}

>From the expression of $y'(t)$ we know that $y'(t)$ has a unique zero that we denote $t_0>0$. Since $p \in (\frac{10}{3}, 6)$ we see that $y''(t)<0$ and $t_0$ is the unique maximum point of $y(t)$. Thus in particular the function $y(t)$ satisfies:\\
(i)\ $y(t_0)=\max_{t >0}y(t)$;\\
(ii)\ $y(0)=\frac{1}{4}B(u)$;\\
(iii)\ $\lim_{t\to +\infty}y(t)=-\infty$;\\
(iv)\ $y(t)$ decreases strictly in $[t_0, +\infty)$ and increases strictly in $(0, t_0]$.\medskip

Since $B(u) \neq 0$, by the continuity of $y(t)$, we deduce that $y(t)$ has a unique zero $t^{\star}>0$. Then $Q(u^{t^*}) =0$ and point (1) follows. Point (2) (3) and (5) are also easy consequences of (i)-(iv). Since $\frac{\partial}{\partial t} F(u^{t})|_{t=t^{\star}}=0$, $\frac{\partial^2}{\partial t^2} F(u^{t})|_{t=t^{\star}} = y'(t^*)<0$ and $t^{\star}$ is unique we get (4) and (6).
\end{proof}

\begin{proof}[Proof of Theorem  \ref{thm1.2}]
We denote
$$ \alpha_k:=\sup_{u \in C_k} F(u) \quad \mbox{ and } \quad \beta_k:=\inf_{u \in C_k} F(u)$$
where 
$$ C_k:=\{u \in S(c): A(u)=k,k>0\}.$$
Let us show that there exist $0<k_1<k_2$ such that
\begin{equation}\label{dadim}
\alpha_{k}< \beta_{k_2}  \text{ for all } k \in (0,k_1] \text{ and } Q(u)>0 \text{ if } A(u)<k_2.
\end{equation}
Notice that, from Hardy-Littlewood-Sobolev's inequality and Gagliardo-Nirenberg's inequalities, it follows that
\begin{eqnarray*}
F(u)&\leq& \frac{1}{2}A(u)+\frac{1}{4}B(u) \leq  \frac{1}{2}A(u)+ C(p)\|u\|_{L^{\frac{12}{5}}}^4 \nonumber \\
&\leq& \frac{1}{2}A(u)+ \widetilde{C}(p)A(u)^{\frac{1}{2}}\cdot D(u)^{\frac{3}{2}}.
\end{eqnarray*}
In particular $\alpha_{k_1} \to 0^+$ as $k_1 \to 0^+$. On the other hand still by the Gagliardo-Nirenberg inequality we have
\begin{equation*}
F(u)\geq \frac{1}{2}A(u)+\frac{1}{p}C(u)
\geq \frac{1}{2}A(u)- C(p)A(u)^{\frac{3(p-2)}{4}}\cdot D(u)^{\frac{6-p}{4}}.
\end{equation*}
Thus, since $\frac{3(p-2)}{2} >1$, $\beta_{k_2} \geq \frac{1}{4} k_2$ for any $k_2 >0$ small enough. These two observations and Lemma 2.1 (2) prove that (\ref{dadim}) hold. We now fix a $k_1 >0$ and a $k_2 >0$ as in (\ref{dadim}). Thus for
\begin{eqnarray*}
\Gamma_c =\{g \in C([0,1],S(c)), \ g(0) \in A_{k_1},F(g(1))<0\},
\end{eqnarray*}
if $\Gamma_c \neq \emptyset$, then from the definition of $\gamma(c)$, we have $\gamma(c)\geq \beta_{k_2}>0$
We only need to verify that $\Gamma_c \neq \emptyset$. This fact follows from
Lemma  \ref{lm1.1} (1).
\end{proof}

\begin{remark}\label{allok}
As it is clear from the proof of Theorem \ref{thm1.2} we can assume without restriction that
$$\sup_{u \in A_{K_c}}F(u) < \gamma(c)/2$$
where $A_{K_c}$ is introduced in the Definition \ref{defMP}.
\end{remark}

\begin{lem}\label{mpestimate}
When $p \in (\frac{10}{3},6)$, we have
$$\gamma (c)=\inf_{u \in V(c)}F(u).\\ $$
\end{lem}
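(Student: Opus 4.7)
The plan is to establish the two inequalities $\gamma(c) \leq \inf_{u \in V(c)} F(u)$ and $\gamma(c) \geq \inf_{u \in V(c)} F(u)$ separately, exploiting the Cazenave scaling $u \mapsto u^t$ for the first and an intermediate-value argument on $Q$ along admissible paths for the second.

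For the upper bound, I would pick an arbitrary $u \in V(c)$ and construct an explicit path in $\Gamma_c$ whose maximum of $F$ equals $F(u)$. Since $u \in V(c)$ means $Q(u)=0$, Lemma 2.2 (4) and (6) give $t^\star(u)=1$ and $F(u^t) < F(u)$ for every $t \neq 1$. Moreover $A(u^t)=t^2 A(u) \to 0$ as $t \to 0^+$, so $u^{t_0} \in A_{K_c}$ for a sufficiently small $t_0 > 0$; and Lemma 2.1 (1) provides some large $t_1 > 1$ with $F(u^{t_1}) < 0$. Reparametrising $s \mapsto u^{t_0 + s(t_1-t_0)}$ on $[0,1]$ yields a path $g \in \Gamma_c$ whose maximum along $[0,1]$ is attained at the interior value corresponding to $t=1$ and equals $F(u)$. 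Hence $\gamma(c) \leq F(u)$, and taking the infimum over $u \in V(c)$ gives the desired inequality.

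For the lower bound, the key observation is that every admissible path must cross $V(c)$. Given $g \in \Gamma_c$, the endpoint condition $g(0) \in A_{K_c}$, combined with the choice of $K_c$ made in the proof of Theorem 2.1 (which places $A_{K_c}$ inside the region where Lemma 2.1 (2) guarantees $Q > 0$), ensures $Q(g(0)) > 0$. On the other side, $F(g(1)) < 0$ forces $Q(g(1)) < 0$ by Lemma 2.1 (3). Since $s \mapsto Q(g(s))$ is continuous on $[0,1]$, there exists $s^\star \in (0,1)$ with $Q(g(s^\star))=0$, i.e.\ $g(s^\star) \in V(c)$. Therefore
\begin{equation*}
\max_{s \in [0,1]} F(g(s)) \;\geq\; F(g(s^\star)) \;\geq\; \inf_{u \in V(c)} F(u),
\end{equation*}
and taking the infimum over $g \in \Gamma_c$ yields $\gamma(c) \geq \inf_{u \in V(c)} F(u)$.

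The only subtle point I anticipate is verifying that the initial choice of $K_c$ is indeed compatible with forcing $Q > 0$ on $A_{K_c}$; but this is exactly how $k_1$ was selected in (\ref{dadim}) within the proof of Theorem 2.1, so one only needs to invoke that choice (or shrink $K_c$ if necessary, as permitted by Remark 2.3) to make the crossing argument rigorous. Continuity of $Q$ along $g$ follows from the continuity of each of $A$, $B$, $C$ on $H^1(\R^3)$ together with the continuity of $g:[0,1] \to S(c) \subset H^1(\R^3)$, and is the standard ingredient one expects to invoke.
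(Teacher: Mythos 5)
Your proposal is correct and follows essentially the same route as the paper: the upper bound via the scaling path $t \mapsto u^{t}$ through a given $u \in V(c)$ (using Lemma \ref{growth} to locate the maximum at $t=1$), and the lower bound via the observation that $Q>0$ at $g(0)$, $Q<0$ at $g(1)$, so every admissible path crosses $V(c)$. The only difference is presentational — the paper phrases the first inequality as a contradiction argument and leaves the crossing step as a one-line remark, whereas you spell out the intermediate-value details — but the mathematical content is identical.
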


\begin{proof}
Let  us argue by contradiction. Suppose there exists $v \in V(c)$ such that $F(v)< \gamma (c)$,
and let, for $\lambda >0$,
$$v^{\lambda}(x) = \lambda^{3/2}v(\lambda x).$$
Then, since $A(v^{\lambda}) = \lambda^2 A(v)$ there exists $0 < \lambda_1 < 1$ sufficiently small so that $v^{\lambda_1} \in A_{k_1}$. Also by Lemma \ref{lm1.1} (1) there exists a $\lambda_2 >1$ sufficiently large so that $F(v^{\lambda_2})<0$. Therefore if we define
$$g(t) = v^{(1-t)\lambda_1 + t \lambda_2}, \quad \mbox{for } t \in [0,1]$$
we obtain a path in $\Gamma_c$. By definition of $\gamma(c)$ and using Lemma \ref{growth},
$$\gamma(c)\leq \max_{t \in [0,1]}F(g(t)) = F\Big(g(\frac{1- \lambda_1}{\lambda_2 - \lambda_1})\Big) =  F(v),$$
and thus $$\gamma (c) \leq \inf_{u \in V(c)}F(u).$$
On other hand thanks to Lemma \ref{lm1.1} any path in $\Gamma_c$ crosses $V(c)$ and hence
$$\max_{t\in [0,1]}F(g(t))\geq \inf_{u\in V(c)}F(u).$$
\end{proof}

\section{Localization of a PS sequence}\label{Section3}

In this section we prove a localization lemma for a specific Palais-Smale sequence $\{u_n\} \subset S(c)$ for $F(u)$ constrained to $S(c)$. From this localization we deduce that the sequence is bounded and that $Q(u_n)= o(1)$. This last property will be essential later to establish the compactness of the sequence.
First we observe that, for any fixed $c>0$, the set
$$L:= \{ u \in V(c), F(u) \leq \gamma(c) +1 \}$$
is bounded. This follows directly from the observation that
\begin{equation}\label{keyest}
F(u)-\frac{2}{3(p-2)}Q(u)=\frac{3p-10}{6(p-2)}A(u)+\frac{3p-8}{12(p-2)}B(u)
\end{equation}
and the fact that  $\frac{3p-10}{6(p-2)}>0$, $\frac{3p-8}{12(p-2)}>0$ if $p \in (\frac{10}{3},6)$. \medskip

Let $R_0 >0$ be such that $L \subset B(0, R_0)$ where $B(0, R_0):= \{u \in H^1(\R^3), ||u||\leq R_0 \}.$ \medskip

The crucial localization result is the following.
\begin{lem}\label{Forse}
Let $p \in (\frac{10}{3},6)$ and
\begin{eqnarray*}
K_{\mu} := \left\{ u \in S(c) \text{ s.t. } |F(u) -\gamma(c)|\leq \mu, \ dist(u, V(c))\leq 2\mu, \
||F^{'}|_{S(c)}(u)||_{H^{-1}} \leq 2 \mu \right\},
\end{eqnarray*}
then for any $\mu>0$, the set $K_{\mu} \bigcap B(0, 3 R_0)$ is not empty.
\end{lem}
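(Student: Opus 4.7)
The plan is to argue by contradiction, exploiting the identity $\gamma(c)=\inf_{V(c)} F$ from Lemma~\ref{mpestimate} together with a deformation argument applied to a specific class of paths built from near-minimizers on $V(c)$.

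I would start by taking a sequence $(v_n)\subset V(c)$ with $F(v_n)\to\gamma(c)$. By identity \eqref{keyest}, together with the signs $\tfrac{3p-10}{6(p-2)}>0$, $\tfrac{3p-8}{12(p-2)}>0$ in the range $p\in(\tfrac{10}{3},6)$, the sequence $(v_n)$ is bounded in $H^1(\R^3)$ and for $n$ large lies in $B(0,R_0)$. For each $n$ I use the Cazenave scaling to build a path
\[
g_n(t):=v_n^{(1-t)\lambda_{1,n}+t\lambda_{2,n}},
\]
with $\lambda_{1,n}>0$ small and $\lambda_{2,n}>0$ large, so that $g_n(0)\in A_{K_c}$, $F(g_n(1))<0$, i.e.\ $g_n\in\Gamma_c$. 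Lemma~\ref{growth}(4)(6) then shows that $F\circ g_n$ attains its maximum uniquely at the parameter where the scaled function equals $v_n$, so $\max_t F(g_n(t))=F(v_n)\to\gamma(c)$.

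Suppose now, for contradiction, that $K_\mu\cap B(0,3R_0)=\emptyset$ for some $\mu>0$. Set
\[
W:=\bigl\{u\in S(c)\cap B(0,3R_0):\ |F(u)-\gamma(c)|\le\mu,\ \mathrm{dist}(u,V(c))\le 2\mu\bigr\}.
\]
By assumption, $\|F'|_{S(c)}(u)\|_{H^{-1}}>2\mu$ throughout $W$. A standard pseudo-gradient construction on the Hilbert manifold $S(c)$, combined with a cut-off supported in a slight thickening of $W$, produces a deformation $\eta:[0,1]\times S(c)\to S(c)$ along which $F$ decreases and which equals the identity outside this thickened set. Applying $\eta(1,\cdot)$ to $g_n$ yields $h_n:=\eta(1,g_n(\cdot))\in\Gamma_c$: its endpoints have $F$-value bounded away from $\gamma(c)$ (by the choice of $A_{K_c}$ together with Remark~\ref{allok}, and by $F(g_n(1))<0$), hence sit outside the support of $\eta$ and are left unchanged. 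Since the maximum point $v_n$ lies in $V(c)\cap B(0,R_0)$, and Lemma~\ref{growth}(2)(6) forces every $g_n(t)$ with $F(g_n(t))$ close to the maximum to be close to $v_n$ in $H^1$, all such $g_n(t)$ lie in $W$. Consequently $\max_t F(h_n(t))<\gamma(c)-\delta$ for some fixed $\delta>0$ and $n$ large, contradicting the definition of $\gamma(c)$ as $\inf_{\Gamma_c}\max_t F$.

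The main obstacle I expect is the quantitative matching required for the deformation to work cleanly: first, the flow has to keep the perturbed path inside $B(0,3R_0)$, which follows from the compact support of the pseudo-gradient vector field together with the fact that the set of near-maximal points of $g_n$ is trapped near $v_n\in B(0,R_0)$; second, one must guarantee that the drop in $\max_t F$ along $\eta$ is bounded below uniformly in $n$, which relies on the strict concavity of $t\mapsto F(g_n(t))$ past its maximum (Lemma~\ref{growth}(2)) to ensure the near-maximum portion of the path stays inside $W$ long enough for the pseudo-gradient flow to act. These are the standard technical hurdles in this Jeanjean-style localization argument; everything else is routine given Lemmas~\ref{lm1.1}, \ref{growth} and \ref{mpestimate}.
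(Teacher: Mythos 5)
Your proposal is correct and follows essentially the same route as the paper: characterize $\gamma(c)$ via $\inf_{V(c)}F$, build scaled paths through near-minimizers on $V(c)$, use the uniform strict concavity at the maximum to trap the near-maximal portion of the path in a small neighbourhood of $V(c)$ inside $B(0,2R_0)$, and then run a cut-off pseudo-gradient flow to push the maximum strictly below $\gamma(c)$. The only point to make fully quantitative is your appeal to Lemma \ref{growth}(2): the paper gets the required uniformity (in the near-minimizer and in $\varepsilon$) from the second-derivative bound \eqref{azzollini}, which rests on $-C(u)\geq k_0$ on $V(c)$ from Lemma \ref{lm1.1}(2).
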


In order to prove Lemma \ref{Forse} we need to develop a deformation argument on $S(c)$. Following \cite{BELI} we recall that, for any $c>0$, $S(c)$ is a submanifold of $H^1(\R^3)$ with codimension 1 and the tangent space at a point $\bar u \in S(c)$ is defined as
$$T_{\bar u}S(c)=\{ v \in H^1(\R^3)  \text{ s.t. } (\bar u, v)_2=0 \}.$$
The restriction $F_{|_{S(c)}}:S(c) \rightarrow \R$
is a $C^1$ functional on $S(c)$ and for any $\bar u \in S(c)$ and any $v\in T_{\bar u}S(c)$
$$\langle F_{|_{S(c)}}'(\bar u), v \rangle=\langle F'(\bar u), v \rangle.$$
We use the notation $||dF_{|_{S(c)}}(\bar u)||$ to indicate the norm
in the cotangent space $T_{\bar u}S(c)'$, i.e the dual norm induced by the
norm of $T_{\bar u}S(c)$, i.e
$$||dF_{|_{S(c)}}(\bar u)||:=\sup_{||v|| \leq 1, \ v \in T_{\bar u}S(c)} |\langle dF(\bar u), v\rangle | .$$
Let $\tilde{S}(c):= \{ u \in S(c) \text{ s.t. } dF{|_{S(c)}}(u) \neq 0\}.$ We know from \cite{BELI} that there exists a locally Lipschitz pseudo
gradient vector field  $Y\in {\mathcal C}^1(\tilde{S}(c),T(S(c))$ ( here $T(S(c))$ is the tangent bundle) such that
\begin{equation}\label{prop1}
\|Y(u)\|\leq 2 \, ||dF_{|_{S(c)}}(u)|| \hbox{, }
\end{equation}
and \begin{equation}\label{finaps}
\langle F_{|_{S(c)}}'(\bar u), Y(u)\rangle \, \geq  ||dF_{|_{S(c)}}(u)||^2 \hbox{, }
\end{equation}
for any $ u \in \tilde{S}(c)$. Note that $||Y(u)|| \neq 0$ for $u \in \tilde{S}(c)$ thanks to (\ref{finaps}).
Now for an arbitrary but fixed $ \mu>0 $ we consider the sets
$$\tilde {N}_{\mu}:=\{ u \in S(c) \text{ s.t. } |F(u)-\gamma(c)|\leq \mu, \ dist(u, V(c)) \leq 2\mu, \ ||Y(u)|| \geq 2 \mu\}$$
$$N_{\mu}:=\{ u \in S(c) \text{ s.t.}\  |F(u)-\gamma(c)| < 2\mu \}$$ 
where, for a subset $\mathcal{A}$ of $S(c)$, $dist(x,{\mathcal A}):=\inf \{ ||x-y|| :  y \in {\mathcal A}\}$.
Assuming that $\tilde {N}_{\mu}$ is non empty there exists a locally Lipschitz function $g: S(c) \rightarrow [0,1]$ such that
\begin{equation*}
g=\left\{
\begin{array}{ll}
1 \text{ on } \tilde{N}_{\mu} \\
0  \text{ on }  N^c_{\mu}.
\end{array}
\right.
\end{equation*}
We also define on $S(c)$ the vector field $W$  by
\begin{equation}\label{VF}
W(u) = 
\left\{
\begin{array}{ll}
& - g (u) \frac{Y( u)}{||Y(u)||}   \text{ if } u \in \tilde{S}(c)\\
& 0  \text{ if } u \in S(c) \backslash \tilde{S}(c)
\end{array}\right.
\end{equation}
and the pseudo gradient flow 
\begin{equation}\label{Flow}
\left\{
\begin{array}{ll}
& \frac{d}{dt}\eta(t, u)= W (\eta(t,  u))\\
& \eta(0, u)=u.
\end{array}\right.
\end{equation}
The existence of a unique solution $\eta(t, \cdot)$ of \eqref{Flow} defined for all $t \in \R$ follows from standard arguments and we refer to  Lemma 5 in \cite{BELI} for this. Let us recall some of its basic properties that will be useful to us
\begin{itemize}
\item $\eta(t, \cdot)$ is a homeomorphism of $S(c)$;
\item $\eta(t, u)=u$ for all $t\in \mathbb{R}$ if $|F(u)-\gamma(c)|\geq 2\mu$;
\item $\frac{d}{dt} F (\eta (t, u))=\langle dF(\eta (t, u)), 
W(\eta(t,u)) \rangle \leq 0$ for all  $t \in \R$ and $u\in S(c).$
\end{itemize}

\begin{proof}[Proof of Lemma \ref{Forse}]:
Let us define, for $\mu >0$,
$$\Lambda_{\mu}=\left\{ u \in S(c) \text{ s.t. } |F(u) -\gamma(c)|\leq \mu, \ dist(u, V(c))\leq 2\mu \right\}.$$
In order to prove  Lemma \ref{Forse} we argue by contradiction  assuming that there exists $\bar \mu \in (0, \gamma(c)/4)$ such that
\begin{equation}\label{contradiction}
u \in \Lambda_{\bar \mu} \cap B(0, 3R_0) \, \Longrightarrow  \, \ ||F^{'}|_{S(c)}(u)||_{H^{-1}} > 2 \bar \mu.
\end{equation}
Then it follows from \eqref{finaps} that
\begin{equation}\label{contradiction2}
 u \in  \Lambda_{\bar \mu} \cap B(0, 3R_0) \, \Longrightarrow u \in  \tilde N_{\bar \mu}.
\end{equation}
Also notice that, since by (\ref{Flow}),
$$||\frac{d}{dt}\eta(t,u)|| \leq 1, \quad \forall t \geq 0, \, \forall u \in S(c),$$
there exists $s_0>0$ depending on $\bar \mu >0$ such that, for all $s \in (0,s_0)$,
\begin{equation}\label{fondamentale}
u \in \Lambda_{\frac{\bar \mu}{2}} \cap B(0, 2R_0) \ \Longrightarrow \, \eta(s,u) \in B(0, 3R_0) \mbox{ and } dist(\eta(s,u) , V(c))\leq 2\bar \mu.
\end{equation}
We claim that, taking $\varepsilon >0$ sufficiently small, we can construct a path $g_{\varepsilon}(t) \in \Gamma_c$ such that
$$ \max_{t \in [0,1]} F(g_{\varepsilon}(t) \leq \gamma(c) + \varepsilon$$
and
\begin{equation}\label{imn}
F(g_{\varepsilon}(t)) \geq \gamma(c) \Longrightarrow \ g_{\varepsilon}(t) \in \Lambda_{\frac{\bar \mu}{2}} \cap B(0, 2R_0).
\end{equation}
Indeed, for $\varepsilon >0$ small, let $u \in V(c)$ be such that $F(u) \leq \gamma(c) +\varepsilon$
and consider the path defined in Lemma \ref{mpestimate} by
\begin{eqnarray}\label{path11}
g_{\varepsilon}(t)= u^{(1-t)\lambda_1 + t \lambda_2}, \quad \mbox{for } t \in [0,1].
\end{eqnarray}
Clearly
$$\max_{t \in [0,1]} F(g_{\varepsilon}(t)) \leq \gamma(c) + \varepsilon.$$ Also for $t_{\varepsilon}^* >0$ such that $(1- t_{\varepsilon}^*) \lambda_1 + t_{\varepsilon}^* \lambda_2 = 1$ we have, since $g_{\varepsilon}(t_{\varepsilon}^*) \in V(c)$, that
\begin{equation}\label{azzollini}
\frac{d^2}{d^2s}F(g_{\varepsilon}(s))_{|t_{\varepsilon}^*} = - \frac{1}{4}B(u) - \frac{3}{2p} (p-2) (5- \frac{3}{2}p) C(u) \leq - C k_0 < 0
\end{equation}
where $k_0 >0$ is given in Lemma \ref{lm1.1} (2). The estimate (\ref{azzollini}) is uniform with respect to the choice of $\varepsilon >0$ and of $u \in V(c)$. Thus, by Taylor's formula, it is readily seen that
$$ \{ t \in [0,1] : F(g_{\varepsilon}(t)) \geq \gamma(c) \} \subset  [t_{\varepsilon}^* - \alpha_{\varepsilon}, t_{\varepsilon}^* + \alpha_{\varepsilon}]$$
for some $\alpha_{\varepsilon} >0$ with $\alpha_{\varepsilon} \to 0$ as $\varepsilon \to 0$. The claim (\ref{path11}) follows for continuity arguments.
\medskip

We fix a $  \varepsilon \in (0, \frac{1}{4}\bar\mu s_0)$ such that (\ref{imn}) hold. Applying the pseudo gradient flow, constructed with $\bar \mu >0$, on $g_{\varepsilon}(t)$ we see that
$\eta(s, g_{\varepsilon}(\cdot)) \in \Gamma_c$ for all $s>0.$ Indeed $\eta(s, u)=u$ for all $s>0$ if $|F(u)- \gamma(c)| \geq 2 \bar{\mu}$ and we conclude by Remark \ref{allok}.

We claim that taking  $ \displaystyle s^* := \frac{4 \varepsilon}{\bar{\mu}}  < s_0$ 
\begin{equation}\label{ok}
\max_{t \in [0,1]}F(\eta(s^*,g_{\varepsilon}(t)))< \gamma(c).
\end{equation}
If (\ref{ok}) hold we have a contradiction with the definition of $\gamma(c)$ and thus the lemma is proved. To prove (\ref{ok}) for simplicity we set
$w=g_{\varepsilon}(t)$ where $t \in [0,1].$ If $F(w)< \gamma(c)$ there is nothing to prove since then $F(\eta(s^*, w) \leq F(w)< \gamma(c)$ for any $s >0$. If $F( w) \geq \gamma(c) $ we assume by contradiction that $F(\eta(s, w)) \geq \gamma(c) $ for all $s \in [0, s^*]$. Then by \eqref{fondamentale} and \eqref{imn},
$\eta(s, w) \in \Lambda_{\bar \mu} \cap B(0, 3R_0)  $ for all $s \in[0,s^*]$.  In particular $||Y(\eta(s, w))||\geq 2\bar \mu$ and $g(\eta(s,w))=1$ for all $s \in [0, s^*]$. Thus 
$$\frac{d}{ds}F(\eta(s, w))=\langle dF(\eta (s,  w)),  - \frac{Y( \eta(t,u))}{||Y(\eta(t,u))||}\rangle.$$
By integration, and since $\displaystyle s^* = \frac{4 \varepsilon}{\bar \mu},$ we get
$$F(\eta(s^*, w))\leq F(w)- \bar{\mu} s^* \leq (\gamma(c)+\varepsilon)- 2 \varepsilon < \gamma(c)-\varepsilon.$$
This proves the claim (\ref{ok}) and the lemma.
\end{proof}

\begin{lem}\label{apriori}
Let $p \in (\frac{10}{3}, 6)$, then there exists a sequence $\{u_n\} \subset S(c)$ and a constant $\alpha>0$ fulfilling
$$Q(u_n)=o(1), \  \ F(u_n)=\gamma(c)+o(1),$$
$$ ||F^{'}|_{S(c)}(u_n)||_{H^{-1}}=o(1) , \ ||u_n||\leq \alpha.$$
\end{lem}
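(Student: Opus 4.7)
The plan is to extract $\{u_n\}$ directly from Lemma \ref{Forse} by a diagonal argument, the heavy lifting having already been done there. For each integer $n\geq 1$ set $\mu_n = 1/n$ and use Lemma \ref{Forse} to pick
$$u_n \in K_{\mu_n} \cap B(0, 3 R_0).$$
The three defining inequalities of $K_{\mu_n}$ immediately give
$$F(u_n)= \gamma(c) + o(1), \qquad \|F'|_{S(c)}(u_n)\|_{H^{-1}} \leq 2/n = o(1),$$
while membership in $B(0, 3R_0)$ yields $\|u_n\| \leq 3R_0 =: \alpha$. Thus three of the four asserted properties are free.

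It only remains to verify that $Q(u_n)=o(1)$. Here I would exploit the remaining piece of information contained in $K_{\mu_n}$, namely $\mathrm{dist}(u_n, V(c)) \leq 2/n$. For each $n$ pick $v_n \in V(c)$ with $\|u_n - v_n\| \leq 3/n$. Since $\{u_n\}$ is bounded in $H^1(\R^3)$, so is $\{v_n\}$. Then by continuity of $Q$ on bounded subsets of $H^1(\R^3)$ (see below) and $Q(v_n)=0$,
$$Q(u_n) = Q(u_n) - Q(v_n) \longrightarrow 0.$$

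The only piece that is not entirely trivial is the continuity of $Q$ on bounded subsets of $H^1$. The term $A$ is continuous by definition and $C$ is continuous since $H^1(\R^3) \hookrightarrow L^p(\R^3)$ for $p\in (\frac{10}{3},6) \subset (2,6)$. For the nonlocal term $B$ one writes
$$B(u) - B(v) = \int\!\!\int \frac{(|u(x)|^2 - |v(x)|^2)|u(y)|^2}{|x-y|}\,dxdy + \int\!\!\int \frac{|v(x)|^2(|u(y)|^2 - |v(y)|^2)}{|x-y|}\,dxdy,$$
and applies the Hardy-Littlewood-Sobolev inequality together with the Sobolev embedding $H^1(\R^3) \hookrightarrow L^{12/5}(\R^3)$ to obtain an estimate of the form $|B(u)-B(v)| \leq C(\|u\|+\|v\|)^3 \|u-v\|$ on bounded sets. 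This last estimate is the only technical point in the argument, but it is routine.
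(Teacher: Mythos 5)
Your proposal is correct and follows essentially the same route as the paper: take $\mu_n=1/n$ in Lemma \ref{Forse} to get $u_n\in K_{\mu_n}\cap B(0,3R_0)$, and then deduce $Q(u_n)=o(1)$ from $\mathrm{dist}(u_n,V(c))\to 0$ together with the fact that $Q$ vanishes on $V(c)$ and is Lipschitz on bounded sets of $H^1(\R^3)$. The only (cosmetic) difference is that the paper obtains the Lipschitz bound via a mean-value inequality using that $\|dQ(\cdot)\|_{H^{-1}}$ is bounded on bounded sets, whereas you verify it directly by splitting $B(u)-B(v)$ and applying Hardy--Littlewood--Sobolev with the embedding $H^1(\R^3)\hookrightarrow L^{12/5}(\R^3)$.
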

\begin{proof}
First let us consider $\{u_n\} \subset S(c)$  such that $\{u_n\} \subset B(0, 3R_0)$,
$$dist(u_n, V(c))=o(1), \ |F(u_n) -\gamma(c)| = o(1), \ ||F^{'}|_{S(c)}(u_n)||_{H^{-1}}=o(1).
$$
Such sequence exists thanks to Lemma \ref{Forse}. To prove the lemma we just have to show  that
$Q(u_n)=o(1)$. It is readily checked that $||dQ(\cdot)||_{H^{-1}}$ is bounded on any bounded set of $H^1(\R^3)$ and thus in particular on $B(0, 3R_0)$. Now, for any $n \in \N$ and any $w \in V(c)$ we can write
$$Q(u_n) = Q(w)  + dQ(au_n + (1-a)w) (u_n-w)$$
where $a \in [0,1]$. Thus since $Q(w)=0$ we have
\begin{equation}\label{fin}
|Q(u_n)| \leq \max_{u \in B(0, 3R_0)}||dQ||_{H^{-1}}||u_n- w||.
\end{equation}
Finally choosing $\{w_m\} \subset V(c)$ such that
$$||u_n - w_m|| \to dist(u_n, V(c)) \mbox{ as } m \to \infty,$$
since $dist(u_n, V(c)) \to 0$ we obtain from (\ref{fin}) that $Q(u_n) = o(1)$.
\end{proof}

\section{Compactness of our Palais-Smale sequence}\label{Section4}
\begin{prop}\label{prop1.3}
Let $\{v_n\} \subset S(c)$ be a bounded Palais-Smale for $F(u)$ restricted to $S(c)$ such that $F(v_n) \to \gamma(c)$. Then there is a sequence $\{\lambda_n\}\subset \mathbb{R}$, such that, up to a subsequence:\\
(1)\ $v_n \rightharpoonup v_c$ weakly in $H^1(\mathbb{R}^3)$;\\
(2)\ $\lambda_n \to \lambda_c$ in $\mathbb{R}$;\\
(3)\ $-\Delta v_n-\lambda_n v_n+(\left | x \right |^{-1}\ast \left | v_n \right |^2) v_n-|v_n|^{p-2}v_n \to 0$ in $H^{-1}(\mathbb{R}^3)$;\\
(4)\ $-\Delta v_n-\lambda_c v_n+(\left | x \right |^{-1}\ast \left | v_n \right |^2) v_n-|v_n|^{p-2}v_n \to 0$ in $H^{-1}(\mathbb{R}^3)$;\\
(5) $-\Delta v_c-\lambda_c v_c+(\left | x \right |^{-1}\ast \left | v_c \right |^2) v_c-|v_c|^{p-2}v_c= 0$ in $H^{-1}(\mathbb{R}^3).$
\end{prop}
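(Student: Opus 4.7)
The plan is to handle the items in the natural order $(1)\to(3)\to(2)\to(4)\to(5)$. Item $(1)$ is immediate from the assumed $H^1$-boundedness of $\{v_n\}$: extract a subsequence converging weakly to some $v_c \in H^1(\R^3)$, and by Rellich--Kondrachov we may assume in addition that $v_n \to v_c$ strongly in $L^q_{\mathrm{loc}}(\R^3)$ for every $q \in [1,6)$ and almost everywhere on $\R^3$.

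For $(3)$ I would invoke the standard Lagrange multiplier argument for $F$ restricted to the codimension-one submanifold $S(c)$. Setting
$$\lambda_n := \frac{1}{c}\langle F'(v_n), v_n\rangle = \frac{1}{c}\bigl(A(v_n) + B(v_n) - \|v_n\|_p^p\bigr),$$
any $\phi \in H^1(\R^3)$ decomposes as $\phi = \mu_\phi v_n + w$ with $\mu_\phi = (\phi,v_n)_{L^2}/c$ and $w := \phi - \mu_\phi v_n \in T_{v_n}S(c)$. Since $\{v_n\}$ is $H^1$-bounded one has $\|w\|_{H^1} \leq C\|\phi\|_{H^1}$ with $C$ independent of $n$. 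A direct computation gives $\langle F'(v_n) - \lambda_n v_n, \phi\rangle = \langle F'(v_n), w\rangle$, and using the hypothesis $\|F'|_{S(c)}(v_n)\|_{H^{-1}} = o(1)$ together with $w \in T_{v_n}S(c)$ yields $\|F'(v_n) - \lambda_n v_n\|_{H^{-1}} = o(1)$, which is $(3)$. For $(2)$, the Gagliardo--Nirenberg and Hardy--Littlewood--Sobolev inequalities combined with the $H^1$-bound of $\{v_n\}$ show that $A(v_n)$, $B(v_n)$, $\|v_n\|_p^p$ are all bounded, hence so is $\{\lambda_n\}$; a further subsequence yields $\lambda_n \to \lambda_c \in \R$. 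Item $(4)$ follows at once from $(3)$ by writing the difference as $(\lambda_n - \lambda_c) v_n$, which tends to zero in $H^{-1}(\R^3)$ via $\lambda_n \to \lambda_c$ and the $L^2$-boundedness of $\{v_n\}$.

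The main work lies in $(5)$, where one must pass to the limit in the equation from $(4)$. Testing against an arbitrary $\phi \in C_c^\infty(\R^3)$, the linear term $\int \nabla v_n \nabla \phi$ converges by weak $H^1$-convergence of $v_n$, and $\lambda_c \int v_n \phi$ by weak $L^2$-convergence. The local nonlinear term $\int |v_n|^{p-2}v_n\, \phi$ converges to $\int |v_c|^{p-2}v_c\, \phi$ by strong $L^p(\mathrm{supp}\,\phi)$-convergence of $v_n$ and dominated convergence. The delicate point is the Coulombic term $\int (|x|^{-1}\ast|v_n|^2) v_n\, \phi$: Hardy--Littlewood--Sobolev gives a uniform bound of $|x|^{-1}\ast|v_n|^2$ in $L^6(\R^3)$ from the bound of $|v_n|^2$ in $L^{6/5}(\R^3)$, and local strong $L^{12/5}$-convergence of $v_n$ delivers $|v_n|^2 \to |v_c|^2$ in $L^{6/5}_{\mathrm{loc}}$; splitting the kernel into near- and far-field contributions and applying Hardy--Littlewood--Sobolev on the tail then yields $|x|^{-1}\ast|v_n|^2 \to |x|^{-1}\ast|v_c|^2$ in $L^p_{\mathrm{loc}}$, which combined with the strong $L^2_{\mathrm{loc}}$-convergence of $v_n$ is enough to pass the nonlocal product to the limit against $\phi$. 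This gives $\langle F'(v_c) - \lambda_c v_c, \phi\rangle = 0$ for every $\phi \in C_c^\infty(\R^3)$; by density in $H^1(\R^3)$ we conclude $(5)$. The principal obstacle in the proof is thus the Coulombic term in $(5)$; everything else is a standard Lagrange-multiplier/weak-convergence argument.
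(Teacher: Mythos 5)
Your proof is correct and follows essentially the same route as the paper: the same Lagrange-multiplier identity $\lambda_n = \frac{1}{c}\langle F'(v_n), v_n\rangle = \frac{1}{c}\bigl(A(v_n)+B(v_n)-\|v_n\|_p^p\bigr)$ (the paper invokes Lemma 3 of Berestycki--Lions for the equivalence that you derive directly via the decomposition $\phi = \mu_\phi v_n + w$ with $w \in T_{v_n}S(c)$), the same boundedness argument for $(2)$ via Hardy--Littlewood--Sobolev and Gagliardo--Nirenberg, and the same deduction of $(4)$ from $(2)$ and $(3)$. The only divergence is at $(5)$, where the paper simply cites Lemma 2.2 of Zhao--Zhao, whereas you carry out the standard weak-limit passage yourself, including the near/far splitting of the Coulomb term; this is a faithful expansion of the cited step rather than a genuinely different approach.
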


\begin{proof}
Point (1) is trivial. Since $\{v_n\} \subset H^1(\mathbb{R}^3)$ is bounded, following Berestycki and Lions (see Lemma 3 in \cite{BELI}), we know that: \\
\begin{eqnarray*}
F'|_{S(c)}(v_n) &\longrightarrow& 0 \ \mbox{ in } H^{-1}(\mathbb{R}^3) \\
&\Longleftrightarrow & F'(v_n)-\langle F'(v_n),v_n \rangle v_n \longrightarrow 0 \mbox{ in } H^{-1}(\mathbb{R}^3).\\
\end{eqnarray*}
Thus, for any $w\in H^1(\mathbb{R}^3)$,
\begin{eqnarray*}
\langle F'(v_n)-\langle F'(v_n),v_n \rangle v_n, w  \rangle =\int_{\mathbb{R}^3}\nabla v_n  \nabla w dx + \int_{\mathbb{R}^3}\int_{\mathbb{R}^3}\frac{|v_n(x)|^2}{|x-y|}v_n(y) w(y)dxdy \\
-\int_{\mathbb{R}^3}|v_n|^{p-2}v_nwdx -\lambda_n \int_{\mathbb{R}^3}v_n(x)w(x)dx,
\end{eqnarray*}
with
\begin{equation}\label{lambda}
\lambda_n=\frac{1}{\|v_n\|_2}\Big\{\|\nabla v_n\|_2^2+ \int_{\mathbb{R}^3}\int_{\mathbb{R}^3}\frac{|v_n(x)|^2 v_n(x)^2}{|x-y|}dxdy -\|v_n\|_p^p\Big\}.
\end{equation}
Thus we obtain (3) with $\{\lambda_n \} \subset \R$ defined by (\ref{lambda}). If (2) holds then (4) follows immediately from (3). To prove (2), it is enough to verify that $\{\lambda_n\} \subset \R$ is bounded. But since $\{v_n\} \subset H^1(\R^N)$ is bounded, by the Hardy-Littlewood-Sobolev inequality and Gagliardo-Nirenberg inequality, it is easy to see that all terms in the RHS of (\ref{lambda}) are bounded. Finally we refer to Lemma 2.2 in \cite{ZZ} for a proof of (5).
\end{proof}

\begin{lem}\label{compact}
Let $p \in (\frac{10}{3},6)$ and  $\{u_n\} \subset S(c)$  be a bounded sequence such that
$$ Q(u_n) = o(1) \quad \mbox{and} \quad F(u_n) \rightarrow \gamma(c) \text{ with } \gamma(c)>0,$$
then, up to a subsequence and up to translation $u_n \rightharpoonup \bar u\neq 0$.
\end{lem}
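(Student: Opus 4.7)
The plan is a standard concentration-compactness argument, ruling out vanishing by using the assumptions $Q(u_n)=o(1)$ and $F(u_n)\to\gamma(c)>0$ together with the Hardy--Littlewood--Sobolev inequality. I would first observe that identity (\ref{keyest}), already used in Section~\ref{Section3}, gives
\[
F(u_n)-\frac{2}{3(p-2)}Q(u_n)=\frac{3p-10}{6(p-2)}A(u_n)+\frac{3p-8}{12(p-2)}B(u_n),
\]
and the coefficients are strictly positive for $p\in(\tfrac{10}{3},6)$. Since the left-hand side converges to $\gamma(c)>0$ while $Q(u_n)\to 0$, we obtain a strictly positive lower bound, for $n$ large, on the quantity $A(u_n)+B(u_n)$ (say $\geq \gamma(c)/2$). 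This will furnish the contradiction in the vanishing alternative.

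Next, I would argue by contradiction and assume vanishing, i.e.\ that
\[
\lim_{n\to\infty}\sup_{y\in\R^3}\int_{B(y,R)}|u_n|^2\,dx=0\qquad\text{for every }R>0.
\]
By Lions' concentration--compactness lemma, since $\{u_n\}$ is bounded in $H^1(\R^3)$, this implies $u_n\to 0$ strongly in $L^q(\R^3)$ for every $q\in(2,6)$. Taking $q=p$ gives $C(u_n)\to 0$, and taking $q=12/5$ together with the Hardy--Littlewood--Sobolev inequality $B(u)\leq C\|u\|_{12/5}^{4}$ gives $B(u_n)\to 0$. Plugging both into $Q(u_n)=A(u_n)+\tfrac14 B(u_n)+\tfrac{3(p-2)}{2p}C(u_n)=o(1)$ forces $A(u_n)\to 0$, which contradicts the lower bound derived in the previous paragraph.

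Hence non-vanishing holds: there exist $R>0$, $\delta>0$ and a sequence $\{y_n\}\subset\R^3$ such that
\[
\int_{B(y_n,R)}|u_n|^2\,dx\geq \delta>0
\]
for all $n$ large. Setting $v_n(x):=u_n(x+y_n)$, translation invariance of $F$ and $Q$ and of the $H^1$-norm preserves all the hypotheses. The sequence $\{v_n\}$ is bounded in $H^1(\R^3)$, so up to a subsequence $v_n\rightharpoonup \bar u$ weakly in $H^1$, and by the compactness of the embedding $H^1(B(0,R))\hookrightarrow L^2(B(0,R))$ the lower bound on the $L^2$ mass over $B(0,R)$ passes to $\bar u$, yielding $\bar u\neq 0$.

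The only delicate step is ruling out vanishing; the main obstacle to watch is the nonlocal term $B(u_n)$, but the HLS inequality reduces it cleanly to an $L^{12/5}$ bound, which vanishes under the Lions alternative, so no extra argument is needed there. The rest is routine.
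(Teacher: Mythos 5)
Your proposal is correct and follows essentially the same route as the paper: both rule out vanishing by combining Lions' lemma (which gives $C(u_n)=o(1)$) with the hypothesis $Q(u_n)=o(1)$ and the identity (\ref{keyest}) to contradict $F(u_n)\to\gamma(c)>0$. The only cosmetic difference is that you kill $B(u_n)$ directly via Hardy--Littlewood--Sobolev, whereas the paper deduces $A(u_n)+\tfrac14 B(u_n)=o(1)$ from $Q(u_n)=o(1)$ and $C(u_n)=o(1)$ alone; both are valid and the extra step is harmless.
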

\begin{proof}
If the lemma does not hold it means by standard arguments that $\{u_n\} \subset S(c)$ is vanishing and thus that $C(u_n) = o(1)$ (see \cite{L}). Thus let us argue by contradiction assuming that $C(u_n) = o(1)$,
i.e. that, since $Q(u_n) = o(1)$, $A(u_n)+\frac{1}{4}B(u_n)=o(1)$. Now from \eqref{keyest} we immediately deduce that $F(u_n)=o(1)$ and this contradicts the assumption that $F(u_n) \to \gamma(c) >0.$
\end{proof}

\begin{lem}\label{pohoz}
Let $p \in (\frac{10}{3},6)$, $\lambda \in \R$. If $v\in H^1(\R^3)$  is a weak solution of
\begin{equation}\label{pde2}
-\Delta v + \left( |x|^{-1}*|v|^2\right)v -|v|^{p-2}v=\lambda v
\end{equation}
then $Q(v) =0$. Moreover if $\lambda \geq 0$, there exists a constant $c_0>0$ independent on $\lambda \in \R$ such that the only solution of \eqref{pde2} fulfilling $||v||_2^2\leq c_0$ is the null function.
\end{lem}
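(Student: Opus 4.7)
My plan is to derive $Q(v)=0$ by combining the Pohozaev and Nehari identities satisfied by $v$, and then, assuming $\lambda\ge 0$, to use these together with Hardy--Littlewood--Sobolev and Gagliardo--Nirenberg to produce two competing estimates on $A(v)$ in terms of $D(v)$ which will pin $D(v)$ away from zero.

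For the first assertion, $v$ is a critical point of the unconstrained functional $J(u)=\tfrac{1}{2}A(u)-\tfrac{\lambda}{2}D(u)+\tfrac{1}{4}B(u)-\tfrac{1}{p}\int_{\R^3}|u|^p\,dx$. Under the dilation $v_t(x)=v(x/t)$ one has $A(v_t)=t A(v)$, $D(v_t)=t^3 D(v)$, $B(v_t)=t^5 B(v)$, and $\int|v_t|^p=t^3\int|v|^p$, so $\frac{d}{dt}J(v_t)|_{t=1}=0$ yields the Pohozaev identity
\begin{equation*}
(P):\qquad \tfrac{1}{2}A(v)-\tfrac{3\lambda}{2}D(v)+\tfrac{5}{4}B(v)-\tfrac{3}{p}\int_{\R^3}|v|^p\,dx=0,
\end{equation*}
while testing \eqref{pde2} against $v$ gives the Nehari identity
\begin{equation*}
(N):\qquad A(v)-\lambda D(v)+B(v)-\int_{\R^3}|v|^p\,dx=0.
\end{equation*}
The linear combination $2(P)-3(N)$ annihilates $\lambda D(v)$ and collapses to $-2A(v)-\tfrac{1}{2}B(v)+\tfrac{3(p-2)}{p}\int_{\R^3}|v|^p\,dx=-2Q(v)$, proving $Q(v)=0$.

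For the second statement, assume $\lambda\ge 0$. From $(N)$ we get $\int_{\R^3}|v|^p\,dx\le A(v)+B(v)$, while $Q(v)=0$ reads $\int_{\R^3}|v|^p\,dx=\tfrac{2p}{3(p-2)}[A(v)+\tfrac{1}{4}B(v)]$. Since $\tfrac{2p}{3(p-2)}>1$ for $p<6$, combining these yields, after a short computation, $A(v)\le K_1(p)B(v)$ with $K_1(p)=\tfrac{5p-12}{2(6-p)}>0$ on $(\tfrac{10}{3},6)$. The chain Hardy--Littlewood--Sobolev followed by Gagliardo--Nirenberg gives $B(v)\le C\|v\|_{12/5}^4\le C'D(v)^{3/2}A(v)^{1/2}$, which combined with the previous bound produces, for $v\not\equiv 0$,
\begin{equation*}
A(v)\le K_2(p)D(v)^3.
\end{equation*}
Independently, $Q(v)=0$ together with the Gagliardo--Nirenberg bound $\int_{\R^3}|v|^p\,dx\le C_pA(v)^{3(p-2)/4}D(v)^{(6-p)/4}$ and the trivial inequality $A(v)\le\tfrac{3(p-2)}{2p}\int_{\R^3}|v|^p\,dx$ force, because $1-\tfrac{3(p-2)}{4}=\tfrac{10-3p}{4}<0$,
\begin{equation*}
A(v)\ge K_3(p)D(v)^{-(6-p)/(3p-10)}.
\end{equation*}
Matching the two estimates on $A(v)$ yields $D(v)^{8(p-3)/(3p-10)}\ge K_3/K_2$, with a strictly positive exponent since $p>\tfrac{10}{3}>3$. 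All of $K_1,K_2,K_3$ depend only on $p$ and on universal Sobolev/GN constants (never on $\lambda$), so the threshold $c_0:=(K_3/K_2)^{(3p-10)/(8(p-3))}>0$ works uniformly in $\lambda\ge 0$.

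The main delicate point I anticipate is the rigorous justification of $(P)$ for a general $H^1$ weak solution: the formal scaling argument requires $x\cdot\nabla v\in H^1$, so one must pass through a regularization or truncation, using $H^2_{\mathrm{loc}}$ regularity of $v$ (coming from elliptic theory applied to \eqref{pde2}) together with decay of $|x|^{-1}*|v|^2$. Once $(P)$ is in hand, everything that follows is algebraic manipulation of the two identities combined with standard interpolation.
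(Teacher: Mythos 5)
Your proof is correct and follows essentially the same route as the paper: the Pohozaev and Nehari identities yield $Q(v)=0$ and, for $\lambda\ge 0$, the same inequality $A(v)\le \frac{5p-12}{2(6-p)}B(v)$ that the paper extracts from its second relation, which combined with Hardy--Littlewood--Sobolev and Gagliardo--Nirenberg forces $D(v)$ away from zero. Your final step merely makes the paper's concluding remark quantitative by matching the exponents $A\le K_2D^3$ and $A\ge K_3D^{-(6-p)/(3p-10)}$ explicitly.
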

\begin{proof}
The following Pohozaev type identity holds for $v\in H^1(\R^3)$ weak solution of \eqref{pde2}, see \cite{DM},
\begin{equation*}
\frac{1}{2}\int_{\mathbb{R}^3} |\nabla v|^2dx  +\frac{5}{4}\int_{\mathbb{R}^3}\int_{\mathbb{R}^3}\frac{\left | v(x) \right |^2\left | v(y) \right |^2}{\left | x-y \right |}dxdy-\frac{3}{p}\int_{\mathbb{R}^3}\left | v\right |^pdx=\frac{3 \lambda}{2} \int_{\mathbb{R}^3}\left | v\right |^2dx.
\end{equation*}
By multiplying \eqref{pde2} by $v$ and integrating we derive a second identity
\begin{equation*}
\int_{\mathbb{R}^3} |\nabla v|^2dx  +\int_{\mathbb{R}^3}\int_{\mathbb{R}^3}\frac{\left | v(x) \right |^2\left | v(y) \right |^2}{\left | x-y \right |}dxdy-\int_{\mathbb{R}^3}\left | v\right |^pdx=\lambda \int_{\mathbb{R}^3}\left | v\right |^2dx.
\end{equation*}
With simple calculus we obtain the following relations
\begin{equation}\label{casolambda1}
\begin{split}
&A(v)+\frac{1}{4}B(v)+3\left( \frac{p-2}{2p}\right) C(v)=0,\\
&  (\frac{p-6}{3p-6})A(v)+(\frac{5p-12}{3p-6})\frac{B(v)}{2}=\lambda D(v).
\end{split}
\end{equation}
The first relation of \eqref{casolambda1} is $Q(v)=0$.  This identity together with the Gagliardo-Nirenberg inequality assures the existence of a constant $C(p)$ such that
\begin{equation}
A(v)-C(p) A(v)^{\frac{3(p-2)}{4}}D(v)^{\frac{6-p}{4}}  \leq A(v)+3\left( \frac{p-2}{2p}\right) C(v) \leq 0,
\end{equation}
i.e
\begin{equation}\label{crr}
A(v)^{\frac{10-3p}{4}}\leq C(p)D(v)^{\frac{6-p}{4}}.
\end{equation}
Now we recall that by the Hardy-Littlehood-Sobolev inequality and the
Gagliardo-Nirenberg inequality we have
\begin{equation}\label{impo}
B(v)\leq C A(v)^{\frac{1}{2}}D(v)^{\frac{3}{2}},
\end{equation}
then, from the second relation of \eqref{casolambda1}  we obtain
\begin{equation}\label{crrr}
\lambda D(v) \leq (\frac{p-6}{3p-6})A(v)+\tilde{C}(p)A(v)^{\frac{1}{2}}D(v)^{\frac{3}{2}}.
\end{equation}
Notice that \eqref{crr} tells us that, for any solution $u$ of \eqref{pde2} with small $L^2$-norm, $A(u)$ must be large. This fact assures that the left hand side of \eqref{crrr} cannot be non negative when $D(v)$ is sufficiently small.
\end{proof}
\begin{lem}\label{main-abs1}
Let $p \in (\frac{10}{3}, 6)$. Assume that the bounded Palais-Smale sequence $\{u_{n}\} \subset S(c)$ given by Lemma  \ref{apriori} is weakly convergent, up to translations, to the nonzero function $\bar u.$ Moreover assume that
\begin{equation}\label{crucialcond}
 \forall c_1 \in (0,c), \ \ \gamma(c_1) > \gamma(c).
\end{equation}
%
Then
 $\|u_n-\bar u\|\rightarrow 0$.
 In particular it follows that $\bar u\in S(c)$ and $F(\bar u)=\gamma(c).$
\end{lem}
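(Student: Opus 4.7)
The plan is to combine a Brezis--Lieb type splitting with the strict monotonicity hypothesis (\ref{crucialcond}) and the variational characterization $\gamma(c)=\inf_{V(c)}F$ of Lemma \ref{mpestimate} to force strong convergence. After translating if necessary, assume $u_n\rightharpoonup\bar u\neq 0$ in $H^1(\R^3)$. By Proposition \ref{prop1.3}, there exists $\lambda_c\in\R$ with $\lambda_n\to\lambda_c$ such that $\bar u$ is a weak solution of (\ref{pde2}), and hence by Lemma \ref{pohoz} we have $Q(\bar u)=0$. In particular, setting $c_1:=\|\bar u\|_2^2$, by weak lower semicontinuity $0<c_1\le c$, and $\bar u\in V(c_1)$, so by (\ref{mini}) we get $F(\bar u)\ge \gamma(c_1)$.

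Next, set $v_n:=u_n-\bar u$ and invoke the Brezis--Lieb lemma to split each of the four functionals $A$, $B$, $C$, $D$: the splittings for $A$, $C$, $D$ are standard, while for the nonlocal term one has $B(u_n)=B(v_n)+B(\bar u)+o(1)$ by the usual argument based on weak $L^{6/5}$-continuity of the Riesz potential applied to $|u_n|^2$. Consequently,
\begin{equation*}
F(u_n)=F(v_n)+F(\bar u)+o(1),\qquad Q(u_n)=Q(v_n)+Q(\bar u)+o(1),
\end{equation*}
and since $Q(u_n)=o(1)$ and $Q(\bar u)=0$, this yields $Q(v_n)=o(1)$. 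Plugging into the key identity (\ref{keyest}) applied to $v_n$,
\begin{equation*}
F(v_n)=\frac{2}{3(p-2)}Q(v_n)+\frac{3p-10}{6(p-2)}A(v_n)+\frac{3p-8}{12(p-2)}B(v_n)\ge o(1),
\end{equation*}
so $\liminf_{n\to\infty}F(v_n)\ge 0$.

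Now suppose for contradiction that $c_1<c$. Then by (\ref{crucialcond}) we have $\gamma(c_1)>\gamma(c)$, and combining the bounds above,
\begin{equation*}
0\le \liminf_{n\to\infty}F(v_n)=\gamma(c)-F(\bar u)\le \gamma(c)-\gamma(c_1)<0,
\end{equation*}
which is impossible. Hence $c_1=c$, i.e.\ $\|v_n\|_2\to 0$. Since $\{v_n\}$ is bounded in $H^1(\R^3)$, the Gagliardo--Nirenberg inequality gives $\|v_n\|_p\to 0$ (because of the $L^2$ factor $\|v_n\|_2^{(6-p)/2}$), and the Hardy--Littlewood--Sobolev inequality (\ref{impo}) gives $B(v_n)\to 0$. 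Returning to $Q(v_n)=o(1)$, we extract $A(v_n)\to 0$, so $v_n\to 0$ in $H^1(\R^3)$. Thus $u_n\to\bar u$ strongly, $\bar u\in S(c)$, and $F(\bar u)=\gamma(c)$.

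The main obstacle is the dichotomy step, where strict monotonicity of $\gamma$ is needed precisely to rule out $c_1<c$; the nonlocal Brezis--Lieb splitting for $B$ is technical but standard for Schr\"odinger--Poisson type functionals, while the final passage from $L^2$-convergence to $H^1$-convergence is forced automatically by the constraint $Q(v_n)=o(1)$ together with the subcritical Gagliardo--Nirenberg estimates.
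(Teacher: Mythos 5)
Your proof is correct and follows essentially the same route as the paper: Brezis--Lieb splitting of $F$ and $Q$, the fact that $\bar u\in V(c_1)$ via Proposition \ref{prop1.3} and Lemma \ref{pohoz} so that $F(\bar u)\ge\gamma(c_1)$, the coercivity identity (\ref{keyest}) to get $F(u_n-\bar u)\ge o(1)$, and the strict monotonicity (\ref{crucialcond}) to exclude $c_1<c$. The only (immaterial) difference is in the last step: the paper deduces $A(u_n-\bar u)=o(1)$ from $F(u_n-\bar u)\le o(1)$ together with (\ref{keyest}), whereas you first get $B(v_n),C(v_n)\to 0$ from $\|v_n\|_2\to 0$ via Gagliardo--Nirenberg and Hardy--Littlewood--Sobolev and then use $Q(v_n)=o(1)$; both are valid.
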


\begin{proof}
Let $T(u):=\frac{1}{4}B(u)+\frac{1}{p}C(u)$ such that
\begin{equation}\label{functional}
F(u):= \frac 12   \|\nabla u \|_2^2 +T(u).
\end{equation}
In \cite{BS1} or \cite{ZZ} it is shown that the nonlinear term $T$ fulfills the following splitting properties of Brezis-Lieb type (see \cite{BRLI}),
\begin{equation}\label{i}
T(u_n-\bar u) + T(\bar u)=T(u_n)+ o(1).
\end{equation}
We argue by contradiction and assume that $c_1=||\bar u||_2^2 < c$. Since $u_{n}-\bar u\rightharpoonup0$,
$$
\| u_n-\bar u\|_2^2+\|\bar u\|_2^2=\| u_n\|_2^2+o(1).
$$
Since $\{u_{n}\} \subset H^1(\R^3)$ is a bounded PS sequence at the mountain pass level, we get
$$
\frac 12 \| \nabla u_{n} \|_2^2 +T(u_n)=\gamma(c)+o(1)
$$
and by \eqref{i}, we deduce also
$$
\frac 12 \| \nabla (u_n-\bar u) \|_2^2 +\frac 12 \|\nabla \bar u\|_2^2+T(u_n-\bar u)+T(\bar u)=\gamma(c)+o(1).
$$
Thanks to Lemmas \ref{prop1.3} and \ref{pohoz}, $\bar u \in V(c_1)$ and by Lemma \ref{mpestimate} we get
\begin{equation}\label{quasi}
F(u_n-\bar u)+ \gamma(c_1) \leq \gamma(c)+o(1).
\end{equation}
On the other hand,
\begin{equation}\label{sisi}
F(u_n-\bar u)-\frac{2}{3(p-2)}Q(u_n-\bar u)=\frac{3p-10}{6(p-2)}A(u_n-\bar u)+\frac{3p-8}{12(p-2)}B(u_n-\bar u)
\end{equation}
and
\begin{equation}\label{sisi3}
Q(u_n-\bar u)=Q(u_n-\bar u)+Q(\bar u)=Q(u_n)+o(1)=o(1).
\end{equation}

>From \eqref{sisi} and \eqref{sisi2} we deduce that $F(u_n-\bar u) \geq o(1)$. But then from (\ref{quasi}) we obtain a contradiction with \eqref{crucialcond}. This contradiction proves that $\| \bar u \|_2^2= c$ and $F(\bar u)\geq \gamma(c)$. Now still by  \eqref{quasi} we get $F(u_n -\bar u)\leq o(1)$ and thanks to \eqref{sisi} and \eqref{sisi3} $A(u_n -\bar u)=o(1)$. i.e $\| \nabla (u_{n} - \bar
u )\|_2 = o(1)$.
\end{proof}
\begin{lem}\label{main-abs2}
Let $p \in (\frac{10}{3},6)$. Assume that the bounded Palais-Smale sequence $\{u_{n}\} \subset S(c)$ given by Lemma  \ref{apriori} is weakly convergent, up to translations, to the nonzero function $\bar u.$  Moreover assume that
\begin{equation}\label{crucialcond2}
 \forall c_1 \in (0,c), \ \gamma(c_1) \geq \gamma(c)
\end{equation}
and that the Lagrange multiplier given by Proposition \ref{prop1.3} fulfills
$$\lambda_c \neq 0. $$
Then
 $\|u_n-\bar u\|\rightarrow 0$.
In particular it follows that $\bar u\in S(c)$ and $F(\bar u)=\gamma(c).$
\end{lem}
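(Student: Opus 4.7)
My strategy mirrors the proof of Lemma \ref{main-abs1}, with the hypothesis $\lambda_c \neq 0$ replacing the strict monotonicity \eqref{crucialcond}. Set $w_n := u_n - \bar u$ and $c_1 := \|\bar u\|_2^2 \in (0,c]$; by Brezis-Lieb, $\|w_n\|_2^2 = c - c_1 + o(1)$. Arguing by contradiction, the task is to rule out the case $c_1 < c$.

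First, I repeat the argument of Lemma \ref{main-abs1} to show that $F(w_n), A(w_n), B(w_n), C(w_n)$ all tend to zero. The Brezis-Lieb splittings of $F$ and $Q$ from \cite{BS1, ZZ} give $F(u_n) = F(w_n) + F(\bar u) + o(1)$ and $Q(u_n) = Q(w_n) + Q(\bar u) + o(1)$. By Proposition \ref{prop1.3} and Lemma \ref{pohoz}, $\bar u$ satisfies $Q(\bar u) = 0$, so $\bar u \in V(c_1)$ and $Q(w_n) = o(1)$. Lemma \ref{mpestimate} together with the assumption \eqref{crucialcond2} yields $F(\bar u) \geq \gamma(c_1) \geq \gamma(c)$, whence $F(w_n) \leq o(1)$. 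On the other hand, identity \eqref{keyest} applied to $w_n$ gives $F(w_n) \geq \frac{2}{3(p-2)}Q(w_n) = o(1)$. Therefore $F(w_n) = o(1)$, and since the right-hand side of \eqref{keyest} is a nonnegative combination of $A(w_n)$ and $B(w_n)$, both vanish. Then from $Q(w_n) \to 0$ we also conclude $C(w_n) \to 0$, i.e.\ $\|w_n\|_p \to 0$.

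Second, I use $\lambda_c \neq 0$ to force $c_1 = c$. By Proposition \ref{prop1.3} we have $F'(u_n) - \lambda_n u_n = o(1)$ in $H^{-1}$ with $\lambda_n \to \lambda_c$. Testing this relation against the bounded sequence $w_n$ yields
$$
\langle F'(u_n), w_n\rangle = \lambda_n (u_n, w_n)_2 + o(1).
$$
The left-hand side splits into a gradient, a nonlocal and a power term; each is $o(1)$, using the weak convergence $w_n \rightharpoonup 0$ in $H^1$ combined with $A(w_n), B(w_n), \|w_n\|_p \to 0$ from the previous step (for the nonlocal term one uses the Brezis-Lieb splitting of the form $B$). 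The right-hand side equals $\lambda_n [\|w_n\|_2^2 + (\bar u, w_n)_2] = \lambda_n (c - c_1) + o(1)$. Passing to the limit gives $\lambda_c (c - c_1) = 0$, and since $\lambda_c \neq 0$ we deduce $c_1 = c$. Then Brezis-Lieb forces $\|w_n\|_2 \to 0$, and combined with $A(w_n) \to 0$ we obtain $u_n \to \bar u$ in $H^1$, so $\bar u \in S(c)$ and $F(\bar u) = \gamma(c)$.

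The principal technical point is the vanishing of $\langle F'(u_n), w_n\rangle$ in the second step; the most delicate piece is the cubic nonlocal cross term $\int\!\int \frac{|u_n(x)|^2 u_n(y) w_n(y)}{|x-y|}\,dx\,dy$. The cleanest route is to expand along $u_n = \bar u + w_n$ and control each resulting piece: the contributions involving only $w_n$ vanish because $B(w_n) \to 0$ together with the Hardy-Littlewood-Sobolev bound in $L^{12/5}$, while the mixed pieces vanish by the weak convergence $w_n \rightharpoonup 0$ in $H^1$, hence in $L^{12/5}$. Once this is granted the whole argument falls into place and gives the desired strong convergence.
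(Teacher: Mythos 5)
Your proof is correct and follows essentially the same route as the paper: the first step (using \eqref{crucialcond2}, the Brezis--Lieb splittings and the identity \eqref{keyest} to force $A(u_n-\bar u)$, $B(u_n-\bar u)$, $C(u_n-\bar u)\to 0$) is identical, and the second step exploits the equation together with $\lambda_c\neq 0$ exactly as the paper does, the only cosmetic difference being that you test the Palais--Smale relation against $w_n=u_n-\bar u$ while the paper tests it against $u_n$ and $\bar u$ separately and subtracts, using the splitting of $A,B,C,D$ to arrive at $\lambda_c D(u_n)=\lambda_c D(\bar u)+o(1)$. One small imprecision: for the mixed nonlocal pieces that are quadratic in $w_n$, weak convergence in $L^{12/5}$ is not enough; you should instead note that $\|w_n\|_{12/5}\to 0$ strongly, by interpolating $\|w_n\|_p\to 0$ with the $L^2$-bound.
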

\begin{proof}
Let us argue as in Lemma \ref{main-abs1}. We obtain again
\begin{equation*}
F((u_n-\bar u))+ \gamma(c_1) \leq \gamma(c)+o(1),
\end{equation*}
\begin{equation*}
F(u_n-\bar u)-\frac{2}{3(p-2)}Q(u_n-\bar u)=\frac{3p-10}{6(p-2)}A(u_n-\bar u)+\frac{3p-8}{12(p-2)}B(u_n-\bar u)
\end{equation*}
and
\begin{equation*}
Q(u_n-\bar u)=Q(u_n-\bar u)+Q(\bar u)=Q(u_n)+o(1)=o(1).
\end{equation*}
Thanks to \eqref{crucialcond2} we conclude that
$$\frac{3p-10}{6(p-2)}A(u_n-\bar u)+\frac{3p-8}{12(p-2)}B(u_n-\bar u)=o(1).$$
Then
\begin{equation}\label{stimacruc}
A(u_n-\bar u)=o(1), B(u_n-\bar u)=o(1)\ \mbox{and also}\ C(u_n-\bar u)=o(1),
\end{equation}
since $Q(u_n-\bar u)=o(1)$. Now we use (5) of Proposition \ref{prop1.3}, i.e
$$A(u_n)-\lambda_cD(u_n)+B(u_n)+C(u_n)=A(\bar u)-\lambda_cD(\bar u)+B(\bar u)+C(\bar u)+o(1).$$
Thanks to the splitting properties of $A(u),B(u), C(u)$ and to \eqref{stimacruc}
we get
$$-\lambda_cD(u_n)=-\lambda_cD(\bar u)+o(1),$$
which implies $D(u_n-\bar u)=o(1)$, i.e $||u_n-\bar u||_2=o(1)
$. From this point we conclude as in the proof of Lemma \ref{main-abs1}.
\end{proof}

Admitting for the moment that $c \to \gamma(c)$ is non-increasing
(we shall prove it in the next section) we can now complete the proof of Theorem \ref{mainvero}.
\begin{proof}[Proof of Theorem \ref{mainvero}]
By Lemmas \ref{apriori} and \ref{compact}
there exists a bounded Palais-Smale sequence
$\{u_n\} \subset S(c)$ such that, up to translation,
$u_n \rightharpoonup u_c\neq 0.$ Thus, by Proposition \ref{prop1.3} there exists a $\lambda_c \in \R$ such that $(u_c,\lambda_c) \in H^1(\R^3)\backslash \{0\} \times \R$ solves (\ref{eq}). Now by Lemma \ref{pohoz} there exists a $c_0 >0$ such that $\lambda_c <0$ if $c \in (0,c_0)$. Also we know from Theorem \ref{maingamma} (ii) that
(\ref{crucialcond2}) holds. At this point the proof follows from Lemma \ref{main-abs2}.
\end{proof}

\section{The behaviour of  $c \to \gamma(c)$} \label{Section5}

In this section we give the proof of Theorem \ref{maingamma}. Let us denote
\begin{eqnarray}\label{1}
\gamma_1(c)=\inf_{u\in S(c)} \max_{t>0}F(u^t),
\end{eqnarray}
and
\begin{eqnarray}\label{2}
\gamma_2(c)=\inf_{u\in V(c)}F(u).
\end{eqnarray}

\begin{lem}\label{lm1} For $p \in (\frac{10}{3},6)$, we have:
$$\gamma(c)=\gamma_1(c)=\gamma_2(c).$$
\end{lem}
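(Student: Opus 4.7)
The equality $\gamma(c)=\gamma_2(c)$ is already Lemma \ref{mpestimate}, so the real content is to identify these with the min-max quantity $\gamma_1(c)$. The entire argument will be driven by the fibering analysis of Lemma \ref{growth}: for every $u\in S(c)$ there is a unique $t^\star(u)>0$ with $u^{t^\star}\in V(c)$, and moreover $t\mapsto F(u^t)$ attains its global maximum on $(0,\infty)$ precisely at $t=t^\star(u)$ (points (1), (4), (6) of Lemma \ref{growth}).

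The plan is to prove the two inequalities $\gamma_1(c)\leq\gamma_2(c)\leq\gamma_1(c)$ separately.

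For $\gamma_1(c)\leq\gamma_2(c)$: pick any $v\in V(c)$. By Lemma \ref{growth}(4) we have $t^\star(v)=1$, and then by Lemma \ref{growth}(6) the map $t\mapsto F(v^t)$ attains its maximum at $t=1$, giving $\max_{t>0}F(v^t)=F(v)$. Since $v\in S(c)$ is admissible for the outer infimum in the definition \eqref{1} of $\gamma_1(c)$, we conclude $\gamma_1(c)\leq F(v)$. Taking the infimum over $v\in V(c)$ yields $\gamma_1(c)\leq\gamma_2(c)$.

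For $\gamma_2(c)\leq\gamma_1(c)$: pick any $u\in S(c)$. Using Lemma \ref{growth}(1) the scaled function $u^{t^\star(u)}$ lies in $V(c)$, and by Lemma \ref{growth}(6) we have $\max_{t>0}F(u^t)=F(u^{t^\star(u)})\geq \inf_{w\in V(c)}F(w)=\gamma_2(c)$. Taking the infimum over $u\in S(c)$ gives $\gamma_1(c)\geq\gamma_2(c)$.

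Combining these two bounds with Lemma \ref{mpestimate} finishes the proof: $\gamma(c)=\gamma_2(c)=\gamma_1(c)$. There is no genuine obstacle here; the only subtlety is recognizing that Lemma \ref{growth} gives both the existence of the distinguished scaling $t^\star(u)$ and the fact that it realizes the supremum of $F$ along the fiber, so that the two min-max formulations match.
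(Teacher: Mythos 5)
Your proposal is correct and follows essentially the same route as the paper: both arguments invoke Lemma \ref{mpestimate} for $\gamma(c)=\gamma_2(c)$ and then use the fibering properties of Lemma \ref{growth} (unique $t^\star(u)$ with $u^{t^\star}\in V(c)$ realizing $\max_{t>0}F(u^t)$, and $t^\star=1$ on $V(c)$) to obtain the two inequalities between $\gamma_1(c)$ and $\gamma_2(c)$. No gaps.
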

\begin{proof} When $p \in (\frac{10}{3},6)$, from  Lemma \ref{mpestimate}, we know that $\gamma(c)=\gamma_2(c)$.
In addition, by  Lemma \ref{growth}, it is clear that for any $u \in S(c)$, there exists a unique $t_0>0$, such that $u^{t_0} \in V(c)$ and $\max_{t>0}F(u^t)=F(u^{t_0})\geq \gamma_2(c)$, thus we get $\gamma_1(c)\geq \gamma_2(c)$. Meanwhile, for any $u \in V(c)$, $\max_{t>0}F(u_t)=F(u)$ and this readily implies that $\gamma_1(c) \leq \gamma_2(c)$. Thus we conclude that $\gamma_1(c)= \gamma_2(c)$.
\end{proof}

\begin{lem} \label{lemma2} We denote
$$f(a,b,c)=\max_{t>0} \left \{ a\cdot t^2+b\cdot t - c\cdot t^{\frac{3}{2}(p-2)} \right \},$$
where $p \in (\frac{10}{3},6)$ and $a>0, b\geq 0, c>0$ which are totally independent of $t$. Then the function: $(a,b,c) \longmapsto f(a,b,c)$ is continuous in $\mathbb{R}^+\times \mathbb{R}_-^c\times \mathbb{R}^+$ (here we denote $\mathbb{R}_-^c$ the non negative real number set).
\end{lem}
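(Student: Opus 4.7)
The plan is to reduce the problem to the classical fact that the maximum of a jointly continuous function over a fixed compact interval depends continuously on the parameters. Set $\alpha := \frac{3(p-2)}{2}$; since $p > \frac{10}{3}$, we have $\alpha > 2$. Write $g_{t}(a,b,c) := at^{2} + bt - ct^{\alpha}$. For any admissible $(a,b,c)$ (meaning $a>0$, $b\geq 0$, $c>0$) one has $g_{0}(a,b,c)=0$, $g_{t}(a,b,c) > 0$ for small $t>0$ (because $a>0$), and $g_{t}(a,b,c) \to -\infty$ as $t \to +\infty$ (because $c>0$ and $\alpha>2$). Hence the supremum defining $f(a,b,c)$ is a positive maximum attained at some $t^{\ast}(a,b,c) > 0$.

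Next, I would fix a target point $(a_{0}, b_{0}, c_{0})$ with $a_{0}, c_{0} > 0$ and $b_{0} \geq 0$ and choose a compact neighbourhood $K = [a_{-}, a_{+}] \times [0, b_{+}] \times [c_{-}, c_{+}]$ of it inside the admissible domain, with $a_{-}, c_{-} > 0$. Using $\alpha > 2$, the one-variable polynomial $t \mapsto a_{+}t^{2} + b_{+}t - c_{-}t^{\alpha}$ tends to $-\infty$, so there exists $R>0$, depending only on $K$, such that $a_{+} t^{2} + b_{+} t - c_{-} t^{\alpha} \leq 0$ for every $t \geq R$. For every $(a,b,c) \in K$ and every $t \geq R$ we then have
$$ g_{t}(a,b,c) \;\leq\; a_{+} t^{2} + b_{+} t - c_{-} t^{\alpha} \;\leq\; 0 \;<\; f(a,b,c), $$
so the maximum defining $f(a,b,c)$ is attained in $(0,R]$. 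Consequently
$$ f(a,b,c) \;=\; \max_{t \in [0,R]} g_{t}(a,b,c) \qquad \text{for all } (a,b,c) \in K. $$

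Finally, $(t,a,b,c) \mapsto g_{t}(a,b,c)$ is jointly continuous on the compact set $[0,R] \times K$, hence uniformly continuous there. A standard $\varepsilon/\delta$ argument (or Berge's maximum theorem) then yields that $(a,b,c) \mapsto \max_{t \in [0,R]} g_{t}(a,b,c)$ is continuous on $K$, so $f$ is continuous at $(a_{0}, b_{0}, c_{0})$. Since this point was arbitrary, the lemma follows. The one potentially delicate step in the argument is the uniform bound $R$ on the location of the maximizers; it genuinely uses both the strict positivity of $a$ and $c$ on $K$ (so that $a_{+}$ is finite and $c_{-}>0$) and the inequality $\alpha > 2$ coming from $p > \frac{10}{3}$.
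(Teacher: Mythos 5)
Your proof is correct, and it takes a genuinely different route from the paper. The paper argues pointwise via the implicit function theorem: it computes $\partial_t g$ and $\partial^2_{tt} g$, shows that for each admissible $(a_0,b_0,c_0)$ there is a unique nondegenerate critical point $t_1$ (with $\partial_t g = 0$ and $\partial^2_{tt} g < 0$), and then obtains a continuous branch $t=t(a,b,c)$ of maximizers near $(a_0,b_0,c_0)$, from which continuity of $f = g(\cdot,\cdot,\cdot,t(\cdot,\cdot,\cdot))$ follows by composition. You instead localize the maximizer uniformly: on a compact parameter box $K$ you exhibit a single $R>0$ such that $g_t(a,b,c)\le 0 < f(a,b,c)$ for all $t\ge R$ and all $(a,b,c)\in K$, so that $f$ coincides on $K$ with the maximum over the fixed compact interval $[0,R]$, and then invoke the standard fact (uniform continuity or Berge's maximum theorem) that such a maximum is continuous in the parameters. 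Your argument is more elementary and more robust, since it needs neither uniqueness nor nondegeneracy of the critical point — only that $a$ and $c$ stay bounded away from the boundary on $K$ and that $\alpha=\tfrac{3}{2}(p-2)>2$; the paper's approach buys the extra information that the maximizer itself varies continuously (indeed smoothly), which is not needed for this lemma. Both proofs are complete; you have correctly isolated the one point where positivity of $a,c$ and the condition $p>\tfrac{10}{3}$ are genuinely used.
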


\begin{proof} Let
$g(a,b,c,t)= a\cdot t^2+b\cdot t - c\cdot t^{\frac{3}{2}(p-2)}$, then
$$\partial_t g(a,b,c,t)=2a\cdot t+b - \frac{3}{2}(p-2)\cdot c\cdot t^{\frac{3p-8}{2}},$$
$$\partial_{tt}^2 g(a,b,c,t)=2a - \frac{3p-6}{2}\cdot \frac{3p-8}{2}\cdot c\cdot t^{\frac{3p-10}{2}}.$$
It's not difficult to see that for any $(a_0,b_0,c_0)$ with $a_0>0, b_0\geq 0, c_0>0$, there exists a unique $t_1>0$, such that $\partial_t g(a_0,b_0,c_0,t_1)=0$ and $\partial_{tt}^2 g(a_0,b_0,c_0,t_1)<0$, thus $f(a_0,b_0,c_0)=g(a_0,b_0,c_0,t_1)$. Then applying the Implicit Function Theorem to the function $\partial_t g(a,b,c,t)$, we deduce the existence of a continuous function $t=t(a,b,c)$ in some neighborhood $O$ of $(a_0,b_0,c_0)$ that satisfies $\partial_t g(a,b,c,t(a,b,c))=0$, $\partial_{tt}^2 g(a,b,c,t(a,b,c))<0$. Thus $f(a,b,c)=g(a,b,c,t(a,b,c))$ in $O$. Now since the function $g(a,b,c,t)$ is continuous in $(a,b,c,t)$, it follows that $f(a,b,c)$ is continuous in $(a_0,b_0,c_0)$. The point $(a_0,b_0,c_0)$ being arbitrary this concludes the proof.
\end{proof}

\begin{lem}\label{prop1}
When $p \in (\frac{10}{3},6)$, the function $c \mapsto \gamma(c)$ is non increasing for $c>0$.
\end{lem}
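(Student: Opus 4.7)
The strategy, as foreshadowed in Remark \ref{proofnonin}, is not to use scaling (the three pieces of $F$ scale differently and no single rescaling can be controlled) but to exhibit, from any almost-optimal path in $\Gamma_{c_1}$, a competitor path in $\Gamma_{c_2}$ whose maximum energy is only marginally larger. The core analytic input is that by shifting a small bump very far away, the nonlocal Coulomb interaction between the bump and the rest decays like $1/|y|$, so the energy of the sum is asymptotically the sum of the energies, while the $L^2$-masses add exactly as soon as the supports are disjoint.

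Fix $c_2>c_1>0$ and $\varepsilon>0$; pick $g\in\Gamma_{c_1}$ with $\max_t F(g(t))\leq \gamma(c_1)+\varepsilon$. First I would prepare the path: since $g([0,1])$ is compact in $H^1(\R^3)$, for $R$ large the truncation $\bar g(t):=\mu(t)\chi_R g(t)$ with $\chi_R$ a standard cut-off and $\mu(t)=\sqrt{c_1/\|\chi_R g(t)\|_2^2}$ stays in $S(c_1)$, is supported in $B(0,2R)$ uniformly in $t$, and $F(\bar g(t))\to F(g(t))$ uniformly. Next, to buy room at the right endpoint, I extend $\bar g$ past $t=1$ along the Cazenave ray $s\mapsto \bar g(1)^s$: because $Q(\bar g(1))<0$ (Lemma \ref{lm1.1}(3)), Lemma \ref{growth} gives $s\mapsto F(\bar g(1)^s)$ strictly decreasing for $s\geq 1$, so this extension drives $F$ as negative as we wish without raising $\max_t F(\bar g(t))$, and keeps supports inside $B(0,2R)$.

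Now I choose the extra mass. Fix $\phi\in C_c^\infty(\R^3)$ with $\|\phi\|_2^2=c_2-c_1$ and set $\xi_\lambda(x)=\lambda^{3/2}\phi(\lambda x)$ for small $\lambda>0$; then $\|\xi_\lambda\|_2^2=c_2-c_1$, $A(\xi_\lambda)=\lambda^2 A(\phi)\to 0$, $B(\xi_\lambda)=\lambda B(\phi)\to 0$, and $|C(\xi_\lambda)|=\lambda^{3(p-2)/2}|C(\phi)|\to 0$ since $p>10/3$; thus $F(\xi_\lambda)\to 0$. Pick $\lambda$ so small that $F(\xi_\lambda)\leq \varepsilon$, and call $R_1$ the radius of the (compact) support of $\xi:=\xi_\lambda$. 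For $|y|\geq 4R+4R_1$ the supports of $\bar g(t)$ and $\xi(\cdot-y)$ are disjoint, so
\[
\tilde g(t):=\bar g(t)+\xi(\cdot-y)\in S(c_2),
\]
the quantities $A,C$ decompose exactly, while the only nontrivial term is the cross piece in $B$, which is bounded by $4c_1(c_2-c_1)/|y|$ by pushing $|x-z|\geq |y|/2$ under the integral. Taking $|y|$ large enough gives $F(\tilde g(t))\leq F(\bar g(t))+F(\xi)+\varepsilon\leq \gamma(c_1)+4\varepsilon$ uniformly in $t$.

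It remains to check $\tilde g\in\Gamma_{c_2}$: continuity is inherited from $\bar g$; $F(\tilde g(1))<0$ holds because we drove $F(\bar g(1))$ very negative in the extension step while $F(\xi)$ and the cross error are each $\leq\varepsilon$; and $\tilde g(0)\in A_{K_{c_2}}$ holds for $K_{c_2}$ chosen large enough (the definition of the mountain pass geometry allows any sufficiently large $K_{c_2}$, cf.\ Remark \ref{allok}, and $A(\tilde g(0))=A(\bar g(0))+A(\xi)$ is bounded independently of our construction). Consequently $\gamma(c_2)\leq \max_t F(\tilde g(t))\leq \gamma(c_1)+4\varepsilon$, and letting $\varepsilon\to 0$ yields $\gamma(c_2)\leq \gamma(c_1)$. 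The main technical difficulty I anticipate is keeping all the approximations (truncation, tail extension, Coulomb cross term) uniform in $t\in[0,1]$ and simultaneously preserving the two endpoint conditions of $\Gamma_{c_2}$; once one realizes that the Cazenave scaling can be used to pay for the loss at $t=1$ and that the bump can be made arbitrarily spread out to pay for the loss at $t=0$, everything fits.
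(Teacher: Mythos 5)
Your core analytic idea is exactly the one the paper uses: truncate a near-optimal object to compact support, graft on a widely spread-out bump of mass $c_2-c_1$ placed far away so that $A$, $C$ and the $L^2$-norms add exactly, and control the only coupling, the Coulomb cross term, by the inverse of the separation. Where you differ is the packaging. The paper never manipulates whole paths: by Lemmas \ref{mpestimate} and \ref{lm1} it has $\gamma(c)=\inf_{u\in V(c)}F(u)=\inf_{u\in S(c)}\max_{t>0}F(u^t)$, so it suffices to take a single $\varepsilon$-optimal $u_1\in V(c_1)$, form $w_\lambda=\widetilde u_1+v_0^{\lambda}\in S(c_2)$, and bound $\max_{t>0}F(w_\lambda^t)$, the comparison of maxima over the two Cazenave rays being handled by the elementary continuity Lemma \ref{lemma2}. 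This removes any need to verify the endpoint conditions of $\Gamma_{c_2}$, which is precisely where your argument has a gap.

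The gap is your claim that $\tilde g(0)\in A_{K_{c_2}}$ because ``the definition of the mountain pass geometry allows any sufficiently large $K_{c_2}$.'' It is the opposite: Definition \ref{defMP} requires $\gamma(c)>\max\{F(g(0)),F(g(1))\}$, so $K_{c_2}$ must be \emph{small} enough that $\sup_{A_{K_{c_2}}}F<\gamma(c_2)$ (Remark \ref{allok} even normalizes this to $\gamma(c_2)/2$). If you enlarge $K_{c_2}$ at will, $A_{K_{c_2}}$ may contain points with $F<0$ and $Q<0$, the class $\Gamma_{c_2}$ strictly grows, the inf-max can drop below $\inf_{V(c_2)}F$, and the quantity you are bounding is no longer $\gamma(c_2)$. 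Moreover the admissible range of $K_c$ shrinks as $c$ grows (the lower bound $\beta_{k}\geq k/4$ in the proof of Theorem \ref{thm1.2} needs $k$ small depending on $c$ through $D(u)=c$), so there is no reason that $A(\tilde g(0))\approx A(g(0))\leq K_{c_1}$ places $\tilde g(0)$ in an admissible $A_{K_{c_2}}$. The step is repairable: since $Q(\tilde g(0))=Q(\bar g(0))+Q(\xi)+\tfrac14(\mbox{cross term})>0$ for your choice of parameters, Lemma \ref{growth} shows $s\mapsto F(\tilde g(0)^s)$ is increasing on $(0,1]$, so you may prepend the ray $s\mapsto \tilde g(0)^s$, $s\in[s_0,1]$, with $s_0^2A(\tilde g(0))\leq K_{c_2}$, without raising the maximum of $F$ along the path. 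But the cleaner repair is to drop the path formalism and conclude directly from $\gamma(c_2)\leq\max_{t>0}F(w^t)$ for the single function $w\in S(c_2)$ obtained from a near-minimizer of $F$ on $V(c_1)$, as the paper does.
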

\begin{proof}
To show that  $c\mapsto \gamma(c)$ is non increasing, it is enough to verify that: for any $c_1<c_2$ and $\varepsilon >0$ arbitrary, we have
\begin{eqnarray}\label{33}
\gamma(c_2)\leq \gamma(c_1)+\varepsilon.
\end{eqnarray}
By definition of $\gamma_2(c_1)$, there exists $u_1 \in V(c_1)$ such that $F(u_1)\leq  \gamma_2(c_1)+ \frac{\varepsilon}{2}$. Thus by Lemma \ref{lm1}, we have
\begin{eqnarray}\label{44}
F(u_1)\leq \gamma(c_1)+\frac{\varepsilon}{2}
\end{eqnarray}
and also
\begin{equation}\label{444}
F(u_1)=\max_{t>0}F(u_1^t).
\end{equation}
We truncate $u_1$ into a function with compact support $\widetilde{u}_1$ as follows. Let $\eta \in C_0^{\infty}(\mathbb{R}^3)$ be radial and such that
$$\eta (x)=\left \{ \begin{matrix}
1, \qquad  \qquad \left | x \right |\leq 1,\\
\in [0,1], \ 1<\left | x \right |<2, \\
0, \qquad \qquad \left | x \right |\geq 2.
\end{matrix} \right.$$
For any small $\delta>0$, let
\begin{eqnarray}\label{55}
\widetilde{u}_1(x)=\eta(\delta x)\cdot u_1(x).
\end{eqnarray}
It is standard to show that
$\widetilde{u}_1(x) \rightarrow u_1(x)$  in $ H^1(\mathbb{R}^3)$ as $ \delta \to 0.$ Then, by continuity, we have, as $\delta \to 0$,
\begin{equation}\label{avv}
A(\widetilde{u}_1)  \to A(u_1),\,
B(\widetilde{u}_1) \to B(u_1) \, \mbox{ and }
C(\widetilde{u}_1)  \to C(u_1).
\end{equation}
At this point applying Lemma \ref{lemma2}, we deduce that there exists $\delta >0$ small enough, such that
\begin{eqnarray}\label{77}
\max_{t>0}F(\widetilde{u}_1^t)&=& \max_{t>0} \Big\{ \frac{t^2}{2}A(\widetilde{u}_1)+ t B(\widetilde{u}_1) + t^{\frac{3}{2}(p-2)}C(\widetilde{u}_1)\Big\} \nonumber \\
&\leq & \max_{t>0} \Big\{ \frac{t^2}{2}A(u_1)+ t B(u_1) + t^{\frac{3}{2}(p-2)}C(u_1)\Big\}   + \frac{\varepsilon}{4} \nonumber \\
&= & \max_{t>0} F(u_1^t)   + \frac{\varepsilon}{4}.
\end{eqnarray}
Now let $v(x)\in C_0^{\infty}(\mathbb{R}^3)$ be radial and such that $supp \ v \subset B_{2R_{\delta}+1}\backslash B_{2R_{\delta}}$. Here $supp \ v$ denotes the support of $v$ and $R_{\delta}=\frac{2}{\delta}$. Then we define
$$ v_0=(c_2- \|\widetilde{u}_1\|_2^2)/\|v\|_2^2 \cdot v$$
for which we have $\|v_0\|_2^2=c_2- \|\widetilde{u}_1\|_2^2$. Finally letting $v_0^{\lambda}=\lambda^{\frac{3}{2}}v_0(\lambda x)$, for $\lambda \in (0,1)$, we have $\|v_0^{\lambda}\|_2^2=\|v_0\|_2^2$ and
\begin{equation}\label{7}
A( v_0^{\lambda} )= \lambda^2\cdot A( v_0 ), \,
B(v_0^{\lambda})  = \lambda\cdot B(v_0) \mbox{ and }
C(v_0^{\lambda} ) = \lambda^{\frac{3}{2}(p-2)}\cdot C(v_0 ).
\end{equation}
Now for any $\lambda \in (0,1)$ we define $w_{\lambda}=\widetilde{u}_1 + v_0^{\lambda}$. We observe that
\begin{equation}\label{suport}
dist\{supp\ \widetilde{u}_1, supp\ v_0^{\lambda} \} \geq \frac{2R_{\delta}}{\lambda}-R_{\delta}=\frac{2}{\delta}(\frac{2}{\lambda}-1).
\end{equation}
Thus $\|w_{\lambda}\|_2^2=\|\widetilde{u}_1\|_2^2+\|v_0^{\lambda}\|_2^2$ and $w_{\lambda} \in S(c_2)$. Also
\begin{equation}\label{11}
A( w_{\lambda})=A(\widetilde{u}_1)+A(v_0^{\lambda})\ \mbox{and } C(w_{\lambda})=C(\widetilde{u}_1) + C(v_0^{\lambda}).
\end{equation}
We claim that, for any $\lambda \in (0,1)$ ,
\begin{equation}\label{12}
\Big| B(w_{\lambda}) -  B(\widetilde{u}_1)- B(v_0^{\lambda})\Big| \leq \lambda \, \|\widetilde{u}_1\|_2^2\cdot \|  v_0^{\lambda}\|_2^2.
\end{equation}
Indeed, from (\ref{suport}),
$$\left ( \widetilde{u}_1+v_0^{\lambda } \right )^2(x)=\widetilde{u}_1^2(x)+\left ( v_0^{\lambda } \right )^2(x),\\
\left ( \widetilde{u}_1+v_0^{\lambda } \right )^2(y)=\widetilde{u}_1^2(y)+\left ( v_0^{\lambda } \right )^2(y).
$$
Thus
\begin{eqnarray*}
B(w_{\lambda})&=&\int_{\mathbb{R}^3} \int_{\mathbb{R}^3}\frac{\left ( \widetilde{u}_1+v_0^{\lambda } \right )^2(x)\cdot \left ( \widetilde{u}_1+v_0^{\lambda } \right )^2(y)}{|x-y|}dxdy \\
&=&\int_{\mathbb{R}^3} \int_{\mathbb{R}^3}\frac{ \widetilde{u}_1^2(x)\cdot \widetilde{u}_1^2(y)}{|x-y|}dxdy +2 \int_{\mathbb{R}^3} \int_{\mathbb{R}^3}\frac{ \widetilde{u}_1^2(x)\cdot \left ( v_0^{\lambda } \right )^2(y)}{|x-y|}dxdy \\
&+&\int_{\mathbb{R}^3} \int_{\mathbb{R}^3}\frac{ \left ( v_0^{\lambda } \right )^2(x)\cdot \left ( v_0^{\lambda } \right )^2(y)}{|x-y|}dxdy \\
&=&B(\widetilde{u}_1)+B(v_0^{\lambda })+2 \int_{\mathbb{R}^3} \int_{\mathbb{R}^3}\frac{ \widetilde{u}_1^2(x)\cdot \left ( v_0^{\lambda } \right )^2(y)}{|x-y|}dxdy
\end{eqnarray*}
with
\begin{eqnarray*}
\int_{\mathbb{R}^3} \int_{\mathbb{R}^3}\frac{ \widetilde{u}_1^2(x)\cdot \left ( v_0^{\lambda } \right )^2(y)}{|x-y|}dxdy
&=&\int_{supp\ \widetilde{u}_1 } \int_{supp\ v_0^{\lambda }}\frac{ \widetilde{u}_1^2(x)\cdot \left ( v_0^{\lambda } \right )^2(y)}{|x-y|}dxdy \\
&\leq& \frac{\delta \lambda }{2(2-\lambda )} \int_{supp\ \widetilde{u}_1 }\int_{supp\ v_0^{\lambda }}\widetilde{u}_1^2(x)\cdot \left ( v_0^{\lambda } \right )^2(y)dxdy \\
&\leq&\frac{\delta \lambda }{2(2-\lambda )}\left \| \widetilde{u}_1 \right \|_2^2\cdot \left \|v_0^{\lambda }  \right \|_2^2 \\
&\leq& \frac{\lambda }{2}\left \| \widetilde{u}_1 \right \|_2^2\cdot \left \|v_0^{\lambda }  \right \|_2^2
\end{eqnarray*}
and then (\ref{12}) holds. Now from (\ref{11}), (\ref{12}) and using (\ref{7}) we see that
\begin{equation}\label{14}
A(w_{\lambda}) \to A(\widetilde{u}_1 ), \, B(w_{\lambda}) \to B(\widetilde{u}_1 ) \mbox{ and } C(w_{\lambda}) \to C(\widetilde{u}_1 ), \ \mbox{as }\lambda \to 0.
\end{equation}
Thus from Lemma \ref{lemma2} we have that, fixing $\lambda >0$ small enough,
\begin{equation}\label{15}
\max_{t>0}F(w_{\lambda}^t) \leq \max_{t>0} F(\widetilde{u}_1^t)+\frac{\varepsilon}{4}.
\end{equation}
Now, using Lemma \ref{lm1}, (\ref{15}), (\ref{77}), \eqref{444} and \eqref{44} we have that
\begin{eqnarray*}
\gamma(c_2)\leq \max_{t>0}F(w_{\lambda}^t)&\leq& \max_{t>0} F(\widetilde{u}_1^t)+\frac{\varepsilon}{4} \\
&\leq &\max_{t >0} F(u_1^t) + \frac{\varepsilon}{2} \\
&=& F(u_1) + \frac{\varepsilon}{2}  \leq \gamma(c_1)+\varepsilon
\end{eqnarray*}
and this ends the proof.
\end{proof}

\begin{lem}\label{continuity}
When $p \in (\frac{10}{3},6)$, $c \mapsto \gamma(c)$ is continuous at each $c>0$.
\end{lem}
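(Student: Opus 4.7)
My plan is to combine the monotonicity from Lemma \ref{prop1}, which supplies one of the two semicontinuity inequalities for free, with a linear $L^2$-rescaling argument that transports near-optimal functions between $S(c_0)$ and $S(c_n)$, relying on the continuity of the scalar maximizer from Lemma \ref{lemma2}. For any sequence $c_n \to c_0 > 0$ the task is then to produce matching upper and lower bounds on $\gamma(c_n)$.

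For upper semicontinuity I would fix $\varepsilon > 0$ and, via Lemma \ref{lm1}, pick $u \in V(c_0)$ with $F(u) \leq \gamma(c_0) + \varepsilon$, so that $F(u) = \max_{t>0} F(u^t)$ by Lemma \ref{growth}. Setting $u_n(x) := \sqrt{c_n/c_0}\, u(x) \in S(c_n)$, the triple $(A(u_n), B(u_n), -C(u_n))$ is an explicit power of $c_n/c_0$ times $(A(u), B(u), -C(u))$ and converges to the latter. By formula \eqref{1.1} and Lemma \ref{lemma2}, $\max_{t>0} F(u_n^t) \to F(u)$, whence
\[
\gamma(c_n) \leq \max_{t>0} F(u_n^t) \leq \gamma(c_0) + 2\varepsilon \quad \text{for large } n,
\]
yielding $\limsup_n \gamma(c_n) \leq \gamma(c_0)$.

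For lower semicontinuity, if $c_n \leq c_0$ eventually then monotonicity gives $\gamma(c_n) \geq \gamma(c_0)$ directly. In the remaining case $c_n \geq c_0$ I would take $v_n \in V(c_n)$ with $F(v_n) \leq \gamma(c_n) + 1/n$ and rescale $\tilde v_n := \sqrt{c_0/c_n}\, v_n \in S(c_0)$. Since $Q(v_n) = 0$, identity \eqref{keyest} reduces to $F(v_n) = \frac{3p-10}{6(p-2)} A(v_n) + \frac{3p-8}{12(p-2)} B(v_n)$; combined with $\gamma(c_n) \leq \gamma(c_0)$ (monotonicity in the range $c_n \geq c_0$) this bounds $A(v_n)$ and $B(v_n)$ from above, and Gagliardo--Nirenberg then bounds $|C(v_n)|$. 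Using $\gamma(c_0) > 0$ together with $Q(v_n) = 0$ and Gagliardo--Nirenberg I can also bound $A(v_n)$ and $-C(v_n)$ away from $0$, so $(A(v_n), B(v_n), -C(v_n))$ remains in a compact subset $K \subset (0,\infty) \times [0,\infty) \times (0,\infty)$. The rescaled triple $(\beta_n^2 A(v_n), \beta_n^4 B(v_n), -\beta_n^p C(v_n))$ with $\beta_n = \sqrt{c_0/c_n} \to 1$ stays near $K$, so uniform continuity of $f$ from Lemma \ref{lemma2} on a compact neighbourhood of $K$ gives $\max_{t>0} F(\tilde v_n^t) - \max_{t>0} F(v_n^t) \to 0$, and using $v_n \in V(c_n)$ together with Lemma \ref{growth} one has $\max_{t>0} F(v_n^t) = F(v_n)$. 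Therefore
\[
\gamma(c_0) \leq \max_{t>0} F(\tilde v_n^t) = F(v_n) + o(1) \leq \gamma(c_n) + o(1),
\]
which produces $\liminf_n \gamma(c_n) \geq \gamma(c_0)$.

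The main obstacle I anticipate is the uniform lower bound on $A(v_n)$ and $-C(v_n)$ needed to keep the triples inside $K$; without this, Lemma \ref{lemma2} would not apply uniformly and the comparison $\max_t F(\tilde v_n^t) \approx F(v_n)$ could fail. Once that is handled via the positivity $\gamma(c_0) > 0$ coming from the proof of Theorem \ref{thm1.2}, the $V(c)$-identity, and Gagliardo--Nirenberg, the two semicontinuity inequalities combine to give continuity at $c_0$.
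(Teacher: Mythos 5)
Your proposal is correct and follows essentially the same route as the paper: rescale near-minimizers of $F$ on $V(\cdot)$ multiplicatively in $L^2$, control the triple $(A,B,-C)$ using $Q=0$, the identity \eqref{keyest} and Gagliardo--Nirenberg, and pass to the limit via the continuity of the max function in Lemma \ref{lemma2} together with the characterization $\gamma=\gamma_1=\gamma_2$. The only (harmless, in fact slightly advantageous) difference is organizational: you prove the upper semicontinuity inequality directly by rescaling a fixed near-minimizer of $V(c_0)$, whereas the paper invokes monotonicity to reduce to the single one-sided inequality for $c_n \to c^+$ and runs the rescaling argument only there.
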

\begin{proof}
Since, by Lemma \ref{prop1}, $c \to \gamma(c)$ is non increasing proving that it is continuous at $c>0$ is equivalent to show that for any sequence $c_n \to c^+$
\begin{equation}\label{c1}
\gamma(c) \leq \lim_{c_n \to c^+}\gamma(c_n).
\end{equation}
Let $\varepsilon >0$ be arbitrary but fixed. By Lemma \ref{mpestimate} we know that there exists  $u_n \in V(c_n)$ such that
\begin{equation}\label{c2}
F(u_n) \leq \gamma(c_n) + \frac{\varepsilon }{2}.
\end{equation}
We define $\widetilde{u}_n=\frac{c}{c_n}\cdot u_n := \rho _n \cdot u_n$. Then $\widetilde{u}_n \in S(c)$  and  $\rho _n  \to 1^-$. In addition
\begin{eqnarray}\label{add}
\gamma(c)&\leq& \max_{t>0}F(\widetilde{u}_n^t) \nonumber \\
&=&\max_{t>0}\{\frac{t^2}{2}\rho _n^2 A(u_n)+\frac{t}{4}\rho _n^4B(u_n)+\frac{t^{\frac{3p-6}{2}}}{p}\rho_n^p C(u_n) \}.
\end{eqnarray}
Since $u_n \in V(c_n)$ and $c_n \to c^+$, using the identity
\begin{eqnarray}\label{identity}
F(u_n)-\frac{2}{3(p-2)}Q(u_n)=\frac{3p-10}{6(p-2)}A(u_n)+\frac{3p-8}{12(p-2)}B(u_n),
\end{eqnarray}
it is not difficult to check that $A(u_n), B(u_n)$ and $C(u_n)$ are bounded both from above and from zero. Thus without restriction we can get that
\begin{equation*}
A(u_n) \to A >0, \, B(u_n) \to B \geq 0 \quad \mbox{and} \quad C(u_n) \to C<0.
\end{equation*}
Indeed, $A\geq 0, B\geq 0, C \leq 0$ are trivial and it is also easy to verify by contradiction that $A \neq 0, C \neq 0$ from \eqref{impo}, (\ref{identity}) and the fact
$$Q(u_n)=A(u_n)+\frac{1}{4}B(u_n)+\frac{3p-6}{2p}C(u_n)=0.$$
Now recording that $\rho_n \to 1^-$, using Lemma \ref{lemma2} twice, we get from (\ref{add}), for any $n \in \N$ sufficiently large
\begin{eqnarray}\label{c5}
\max_{t>0}F(\widetilde{u}_n^t)&\leq &\max_{t>0}\{ (\frac{A}{2})t^2 +  (\frac{B}{4}) t - (- \frac{C}{p}) t^{\frac{3}{2}(p-2)}\} + \frac{\varepsilon}{4}\nonumber \\
&\leq&\max_{t>0} \{ (\frac{A(u_n)}{2}) t^2 + (\frac{B(u_n)}{4}) t- (- \frac{C(u_n)}{p}) t^{\frac{3}{2}(p-2)}\} + \frac{\varepsilon}{2}\nonumber \\
& = & \max_{t>0}F(u_n^t) + \frac{\varepsilon}{2}
= F(u_n)+ \frac{\varepsilon }{2}.
\end{eqnarray}
Now from (\ref{c2}) and (\ref{c5}) it follows that
$
\gamma(c) \leq \gamma(c_n)+ \varepsilon$
for $n \in \N$ large enough and since $\varepsilon >0$ is arbitrary (\ref{c1}) holds.
\end{proof}

\begin{lem}\label{strictdecrease} Let  $p \in (\frac{10}{3},6)$ and $(u_c, \lambda_c) \in H^1(\mathbb{R}^3) \times \mathbb{R}^*$ solves
\begin{equation*}
-\Delta v - \lambda v + (\left | x \right |^{-1}\ast \left | v \right |^2) v-|v|^{p-2}v=0  \mbox{ in }  \mathbb{R}^{3},
\end{equation*}
with $F(u_c)=\inf_{u \in V(c)}F(u)=\gamma(c)$. Then if $\lambda_c <0$ ($\lambda_c >0)$ the function  $c \to \gamma(c)$ is strictly decreasing (increasing) in a neighborhood of $c$.
\end{lem}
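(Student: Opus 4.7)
The plan is to perturb $u_c$ by an $L^{2}$-scaling and identify the derivative of the mountain pass value with $\lambda_c c/2$. By Proposition~\ref{prop1.3} and Lemma~\ref{pohoz}, $F(u_c)=\gamma(c)=\inf_{V(c)}F$ forces $u_c\in V(c)$. For $s>0$ near $1$ set $w_s(x):=s^{1/2}u_c(x)$, so $w_s\in S(sc)$; by Lemma~\ref{lm1},
\begin{equation*}
\gamma(sc)\leq g(s):=\max_{t>0}F(w_s^t).
\end{equation*}

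First I would analyse $g$ via the implicit function theorem. With $\phi(s,t):=F(w_s^t)$, the Cazenave scaling and the homogeneity of the nonlocal term give
\begin{equation*}
\phi(s,t)=\frac{t^{2}s}{2}A(u_c)+\frac{t s^{2}}{4}B(u_c)+\frac{t^{3(p-2)/2}s^{p/2}}{p}C(u_c).
\end{equation*}
Since $u_c\in V(c)$, $\partial_t\phi(1,1)=Q(u_c)=0$, and the concavity analysis of Lemma~\ref{growth} gives $\partial_{tt}^{2}\phi(1,1)<0$ while $t=1$ is the unique global maximum of $\phi(1,\cdot)$. The IFT then produces a smooth $s\mapsto t(s)$ with $t(1)=1$ solving $\partial_t\phi(s,t(s))=0$; by the uniqueness statement of Lemma~\ref{growth}(1) and continuous dependence on the coefficients, $t(s)$ remains the unique global maximizer for $s$ close to $1$, so $g(s)=\phi(s,t(s))$ is smooth and the envelope identity yields $g'(1)=\partial_s\phi(1,1)=\tfrac12\bigl[A(u_c)+B(u_c)+C(u_c)\bigr]$.

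Pairing equation~(\ref{eq}) with $u_c$ and integrating gives $A(u_c)+B(u_c)+C(u_c)=\lambda_c\|u_c\|_2^{2}=\lambda_c c$, whence $g'(1)=\lambda_c c/2$. If $\lambda_c<0$, then $g(s)<g(1)=F(u_c)=\gamma(c)$ for every $s>1$ sufficiently close to $1$, and $\gamma(sc)\leq g(s)<\gamma(c)$; combined with the non-increasing Lemma~\ref{prop1}, this produces strict decrease of $c\mapsto\gamma(c)$ in a neighborhood of $c$. The case $\lambda_c>0$ is handled symmetrically by using $s<1$: the same inequality $g(s)<\gamma(c)$ now holds on the left, giving $\gamma(sc)<\gamma(c)$ for $sc<c$ close, i.e. strict increase in a neighborhood of $c$.

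The one subtle point to guard against is that the $t(s)$ produced by the IFT is only locally defined, whereas $g(s)$ is a global supremum; this is circumvented by quoting Lemma~\ref{growth}(1),(6), which ensures that for every function near $u_c$ in $S(c)$ the map $t\mapsto F(\,\cdot\,^t)$ has a single critical point, which is its global maximum. Once this is in place, the computation of $g'(1)$ and the reading of the sign of $\lambda_c$ proceed directly.
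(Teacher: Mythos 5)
Your proof is correct and follows essentially the same route as the paper's: your two-parameter family $t^{3/2}s^{1/2}u_c(tx)$ is exactly the paper's $u_{t,\theta}$, the key computation $\partial_s\phi(1,1)=\tfrac12\left(A(u_c)+B(u_c)+C(u_c)\right)=\tfrac12\lambda_c c$ is the same, and your implicit-function-theorem step applied to $\partial_t\phi=0$ is equivalent to the paper's applied to $Q(u_{t,\theta})=0$, since $Q(u^t)=t\,\partial_t F(u^t)$. The only cosmetic difference is that you organize the comparison through the envelope theorem and the characterization $\gamma=\gamma_1$, whereas the paper constructs the element of $V(tc)$ explicitly and uses $\gamma=\gamma_2=\inf_{V(\cdot)}F$ with a Lagrange-form Taylor expansion.
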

\begin{proof}
The proof follows as a consequence of the implicit function theorem.\\
Let us consider the following rescaled functions $u_{t, \theta}(x)=\theta^{\frac{3}{2}}t^{\frac{1}{2}}u_c(\theta x)\in S(tc)$ with $\theta\in (0, \infty)$ and $t \in (0,\infty)$.
We define the following quantities
\begin{equation}\label{alpha}
\alpha(t, \theta)= F(u_{t, \theta}),
\end{equation}
\begin{equation}\label{alpha}
\beta(t, \theta)= Q(u_{t, \theta}).
\end{equation}
Simple calculus shows that
\begin{equation}
\frac{\partial \alpha(t,\theta)}{\partial t}_{|_{(1,1)}}=\frac{1}{2}\left ( A(u_c)+B(u_c)+C(u_c) \right ) =\frac{1}{2} \lambda_c c
\end{equation}
\begin{equation}
\frac{\partial \alpha(t,\theta)}{\partial \theta}_{|_{(1,1)}}=0, \quad \frac{\partial^2 \alpha(t,\theta)}{\partial^2 \theta}_{|_{(1,1)}}<0.
\end{equation}
Following the classical Lagrange Theorem we get, for any $\delta_t \in \R$, $ \delta_{\theta} \in \R$,
\begin{equation}
\alpha(1+\delta_t, 1+\delta_{\theta})=\alpha(1,1)+\delta_t \frac{\partial \alpha(t,\theta)}{\partial t}_{|_{(\bar t,\bar \theta)}}+\delta_{\theta} \frac{\partial \alpha(t,\theta)}{\partial \theta}_{|_{(\bar t,\bar \theta)}}
\end{equation}
where $|1- \bar t| \leq |\delta _t|$ and $|1- \bar \theta| \leq |\delta_{\theta}|$,
and by continuity, for sufficiently small $\delta_{t} >0$  and sufficiently small  $|\delta_{\theta}|,$
\begin{equation}\label{crucial}
\alpha(1+\delta_t, 1+\delta_{\theta})<\alpha(1,1) \quad \text{ if } \quad \lambda_c <0
\end{equation}
\begin{equation}\label{crucialbis}
\alpha(1-\delta_t, 1+\delta_{\theta})<\alpha(1,1) \quad \text{ if } \quad \lambda_c >0.
\end{equation}
To conclude the proof it is enough to show that $\beta(t,u)=0$ in a neighborhood of $(1,1)$ is the graph of a function $g:[1-\varepsilon, 1+\varepsilon]\rightarrow \R$ with $\varepsilon>0$, such that $\beta(t,g(t))=0$ for $t \in [1-\varepsilon, 1+\varepsilon]$. Indeed
in this case we have when $\lambda_c <0$  by (\ref{crucial})
$$\gamma((1+\varepsilon)c)=\inf_{u \in V((1+\varepsilon)c)}F(u)\leq F(u_{1+\varepsilon, g(1+\varepsilon)})<F(u_c)=\gamma(c)$$
and when $\lambda_c >0$ we have by (\ref{crucialbis})
$$\gamma((1-\varepsilon)c)=\inf_{u \in V((1-\varepsilon)c)}F(u)\leq F(u_{1-\varepsilon, g(1-\varepsilon)})<F(u_c)=\gamma(c).$$
To show the graph property by the Implicit Function Theorem it is sufficient to show that
\begin{equation}\label{implicit}
\frac{\partial \beta(t,\theta)}{\partial \theta}_{|_{(1,1)}}\neq 0.
\end{equation}
By simple calculus we get
$$\frac{\partial \beta(t,\theta)}{\partial \theta}_{|_{(1,1)}}=2A(u_c)+\frac{B(u_c)}{4}+\frac{1}{p}(\frac{3}{2}(p-2))^2C(u_c).$$
Using the fact that $Q(u_c)=0$ we then obtain
$$\frac{\partial \beta(t,\theta)}{\partial \theta}_{|_{(1,1)}} = (5 - \frac{3}{2}p) A(u_c) + (1 - \frac{3}{8}p) B(u_c).$$
Then, since $p > \frac{10}{3}$ we see that to have
$$ \frac{\partial \beta(t,\theta)}{\partial \theta}_{|_{(1,1)}} =0$$ necessarily  $A(u_c)=0$ and $B(u_c)=0$. Thus the derivative is never zero.
\end{proof}

\begin{lem}\label{limzero}
We have $\gamma(c) \to  \infty$ as $c \to 0$.
\end{lem}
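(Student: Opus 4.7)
The plan is to exploit the variational characterization $\gamma(c) = \inf_{u \in V(c)} F(u)$ established in Lemma \ref{mpestimate}, combined with the algebraic identity \eqref{keyest}:
\begin{equation*}
F(u) - \frac{2}{3(p-2)} Q(u) = \frac{3p-10}{6(p-2)} A(u) + \frac{3p-8}{12(p-2)} B(u).
\end{equation*}
Since $Q(u) = 0$ on $V(c)$ and both coefficients are strictly positive when $p \in (\frac{10}{3}, 6)$, this immediately yields
\begin{equation*}
F(u) \geq \frac{3p-10}{6(p-2)} A(u), \qquad \forall\, u \in V(c).
\end{equation*}
Therefore it suffices to show that $\inf_{u \in V(c)} A(u) \to +\infty$ as $c \to 0^+$.

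To that end, I would invoke the Gagliardo-Nirenberg inequality in the form $-C(u) = \|u\|_p^p \leq K(p)\, A(u)^{3(p-2)/4}\, D(u)^{(6-p)/4}$ valid for every $u \in H^1(\R^3)$. The constraint $Q(u) = 0$ on $V(c)$ rewrites as $A(u) + \tfrac{1}{4} B(u) = \tfrac{3(p-2)}{2p}(-C(u))$, and dropping the nonnegative Coulomb term yields
\begin{equation*}
A(u) \leq \tfrac{3(p-2)}{2p}(-C(u)) \leq K'(p)\, A(u)^{3(p-2)/4}\, c^{(6-p)/4}.
\end{equation*}
Note that $A(u) > 0$ for any $u \in V(c)$, since $A(u) = 0$ would force $u \equiv 0$, contradicting $\|u\|_2^2 = c > 0$. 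Because $\frac{3(p-2)}{4} > 1$ precisely when $p > \frac{10}{3}$, we may divide through to obtain a uniform lower bound of the shape
\begin{equation*}
A(u) \geq \kappa(p)\, c^{-\frac{6-p}{3p-10}},
\end{equation*}
and the exponent $\frac{6-p}{3p-10}$ is strictly positive throughout $(\frac{10}{3},6)$.

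Combining the two displays gives $\gamma(c) \geq \tilde{C}(p)\, c^{-(6-p)/(3p-10)}$, which tends to $+\infty$ as $c \to 0^+$. I do not anticipate any serious obstacle; the only care required is tracking the Gagliardo-Nirenberg exponents, and it is exactly the hypothesis $p > \frac{10}{3}$ that ensures both the positivity of the coefficient $(3p-10)/(6(p-2))$ in \eqref{keyest} and the supercritical exponent $3(p-2)/4 > 1$ needed to turn the Gagliardo-Nirenberg estimate into a useful lower bound on $A(u)$.
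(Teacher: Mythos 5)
Your proof is correct, and the core computation is the same as the paper's: combine the constraint $Q(u)=0$ with the Gagliardo--Nirenberg inequality to force $A(u)\geq \kappa(p)\,c^{-(6-p)/(3p-10)}$, then feed this into the identity \eqref{keyest} to bound $F$ from below. The one genuine difference is in the framing: the paper applies these estimates to the actual minimizer $u_c$ whose existence is supplied by Theorem \ref{mainvero} (hence only for $c$ small), whereas you apply them uniformly to every $u\in V(c)$ and invoke only the variational characterization $\gamma(c)=\inf_{V(c)}F$ from Lemma \ref{mpestimate}. Your route is logically lighter --- it does not depend on the existence theory at all, it yields the explicit quantitative rate $\gamma(c)\gtrsim c^{-(6-p)/(3p-10)}$, and it is valid for every $c>0$ rather than only in the range where a minimizer is known to exist. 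All the individual steps check out: $A(u)>0$ on $V(c)$ is needed before dividing and you justify it, and the sign of the exponent $1-\tfrac{3(p-2)}{4}=-\tfrac{3p-10}{4}<0$ is exactly what turns the Gagliardo--Nirenberg upper bound into a lower bound on $A(u)$.
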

\begin{proof}

By Theorem \ref{mainvero} we know that for any $c>0$ sufficiently small there exists a couple $(u_{c}, \lambda_c)\in H^1(\R^3)\times \R^-$ solution of \eqref{eq} with $||u_c||_2^2=c$ and $F(u_c)=\gamma(c)$. In addition by Lemma
\ref{pohoz}, $Q(u_c)=0$. Thus $u_c \in H^1(\R^3)$ fulfills \\
\begin{equation}\label{22}
0 = Q(u_c) = A(u_c) + \frac{1}{4}B(u_c) + \frac{3(p-2)}{2p}C(u_c)
\end{equation}
\begin{equation}\label{23}
\gamma(c)=F(u_c) = \frac{1}{2} A(u_c) +\frac{1}{4}B(u_c)+\frac{1}{p}C(u_c).
\end{equation}
We deduce from (\ref{22}) that
$A(u_c) \leq - \frac{3(p-2)}{2p}C(u_c)$ and thus it follows from Gagliardo-Nirenberg inequality that
$$\|\nabla u_c\|_2^2 \leq \frac{3(p-2)}{2p} \|u_c\|_p^p \leq \widetilde{C}(p)\cdot \|\nabla u_c\|_2^{\frac{3(p-2)}{2}}\cdot \| u_c\|_2^{\frac{6-p}{2}},$$
i.e
\begin{eqnarray}\label{115}
1\leq \widetilde{C}(p)\cdot \|\nabla u_c\|_2^{\frac{3p-10}{2}}\cdot c^{\frac{6-p}{4}}.
\end{eqnarray}
Since $p \in (\frac{10}{3},6)$, we obtain that
\begin{eqnarray}\label{31}
\|\nabla u_c\|_2^2 \rightarrow \infty, \quad \mbox{as } \ c \to 0.
\end{eqnarray}
Now from (\ref{22}) and (\ref{23}) we deduce that
\begin{eqnarray}\label{113}
\gamma(c)=F(u_c) = \frac{3p-10}{6(p-2)}A(u_c)+\frac{3p-8}{12(p-2)}B(u_c).
\end{eqnarray}
and thus from (\ref{31}) we get immediately  that
$\gamma(c) \rightarrow \infty$  as $ c\to 0.$
\end{proof}

\section{Proof of Theorem \ref{naturalconstraint} and Lemma \ref{description}}\label{Section51}

In this section we prove Theorem \ref{naturalconstraint}. Let us first show

\begin{lem}\label{minimizer}
Let $p\in (\frac{10}{3}, 6)$, for each $u_c  \in \mathcal{M}_c $ there exists a $\lambda_c \in \mathbb{R}$ such that $(u_c, \lambda_c) \in H^1(\R^3) \times \R$ solves (\ref{eq}).
\end{lem}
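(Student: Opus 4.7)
The plan is to prove that $V(c)$ acts as a natural constraint for $F$ restricted to $S(c)$, so that any minimizer $u_c$ of $F$ on $V(c)$ is in fact a critical point of $F|_{S(c)}$ and therefore solves (\ref{eq}). Since $u_c$ minimizes $F$ subject to the two constraints $D(u) = c$ and $Q(u) = 0$, the Lagrange multiplier rule furnishes $\lambda_c, \mu_c \in \mathbb{R}$ with
\[
F'(u_c) = \lambda_c u_c + \mu_c Q'(u_c) \quad \text{in } H^{-1}(\mathbb{R}^3),
\]
provided $u_c$ and $Q'(u_c)$ are linearly independent in $H^{-1}$. The task reduces to showing $\mu_c = 0$.

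The decisive test direction is the infinitesimal generator of the Cazenave scaling, $w := \frac{d}{ds}u_c^s\big|_{s=1}$. Since $s \mapsto u_c^s$ is a $C^1$ curve lying in $S(c)$, we have $(u_c,w)_2 = 0$. Setting $\phi(s) := F(u_c^s)$, Lemma \ref{growth}(7) gives $\phi'(s) = Q(u_c^s)/s$, and therefore
\[
\langle F'(u_c), w\rangle = \phi'(1) = Q(u_c) = 0
\]
because $u_c \in V(c)$. Differentiating once more and using $Q(u_c) = 0$ to kill the boundary term, we obtain $\langle Q'(u_c), w\rangle = \phi''(1) = \frac{d}{ds}Q(u_c^s)\big|_{s=1}$. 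A direct computation from
\[
Q(u_c^s) = s^{2}A(u_c) + \tfrac{s}{4}B(u_c) + \tfrac{3(p-2)}{2p}\, s^{3(p-2)/2}C(u_c),
\]
followed by substitution of $C(u_c) = -\tfrac{2p}{3(p-2)}\bigl(A(u_c) + \tfrac{1}{4}B(u_c)\bigr)$ coming from $Q(u_c)=0$, yields
\[
\langle Q'(u_c), w\rangle \;=\; \frac{10-3p}{2}\,A(u_c) \;-\; \frac{3p-8}{8}\,B(u_c).
\]
For $p \in (\tfrac{10}{3},6)$ both coefficients are strictly negative, and Lemma \ref{lm1.1}(2) (applied with $Q(u_c)=0$) forces $A(u_c)>0$, while $B(u_c) > 0$ trivially. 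Hence $\langle Q'(u_c), w\rangle < 0$.

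Plugging $w$ into the Lagrange equation now gives $0 = \langle F'(u_c), w\rangle = \lambda_c(u_c,w)_2 + \mu_c\langle Q'(u_c),w\rangle = \mu_c\,\phi''(1)$, and since $\phi''(1) \neq 0$ we conclude $\mu_c = 0$, so that $(u_c,\lambda_c)$ solves (\ref{eq}). The same inequality also confirms the constraint qualification: if $Q'(u_c)$ were proportional to $u_c$, then $\langle Q'(u_c),w\rangle$ would vanish against any $w \perp u_c$, contradicting $\phi''(1)<0$. The main technical obstacle is precisely this strict negativity, which rests on two facts that cannot be loosened: the Pohozaev-type identity $Q(u_c)=0$ (tying the three quantities $A,B,C$ together), and the assumption $p > \tfrac{10}{3}$ (which makes the coefficient $10-3p$ strictly negative). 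Both correspond exactly to the working range of the paper, so the argument is sharp.
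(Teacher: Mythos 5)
Your strategy is the classical two-multiplier ``natural constraint'' argument: write $F'(u_c)=\lambda_c u_c+\mu_c Q'(u_c)$ and kill $\mu_c$ by testing against the generator of the Cazenave scaling. This is genuinely different from the paper, which never introduces a second multiplier: it assumes $\|F'|_{S(c)}(u_c)\|_{H^{-1}}\neq 0$, builds a quantitative deformation $\eta$ on $S(c)$ localized near $u_c$, pushes the path $g(t)=u_c^{(1-t)\lambda_1+t\lambda_2}$ of Lemma \ref{mpestimate} below the level $\gamma(c)$ (using Lemma \ref{growth}(6) to handle the points of the path away from $u_c$), and contradicts $\gamma(c)=\inf_{V(c)}F$. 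Your formal computations are correct and consistent with \eqref{azzollini}: with $Q(u_c)=0$ one indeed gets $\frac{d}{ds}Q(u_c^s)|_{s=1}=\frac{10-3p}{2}A(u_c)+\frac{8-3p}{8}B(u_c)<0$ for $p>\frac{10}{3}$.

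However, there is a genuine gap at the decisive step. The test direction $w=\frac{d}{ds}u_c^s|_{s=1}=\frac{3}{2}u_c+x\cdot\nabla u_c$ need not belong to $H^1(\R^3)$ --- for a generic $u_c\in H^1(\R^3)$ the function $x\cdot\nabla u_c$ is not even in $L^2(\R^3)$ --- so the pairings $\langle F'(u_c),w\rangle$ and $\langle Q'(u_c),w\rangle$ are not defined. Equivalently, the dilation group is strongly continuous but not differentiable on $H^1$, so the curve $s\mapsto u_c^s$ is not $C^1$ into $H^1$ and the chain-rule identities $\phi'(1)=\langle F'(u_c),w\rangle$ and $\phi''(1)=\langle Q'(u_c),w\rangle$ are only formal; the scalar function $\phi(s)=F(u_c^s)$ is of course smooth in $s$, but that does not transfer to pairings in $H^{-1}\times H^1$. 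This is exactly the Pohozaev-identity subtlety: such identities are legitimate for functions already known to solve an elliptic equation (whence regularity and decay), which is what you are trying to prove, so the argument as written is circular. The same defect infects your verification of the constraint qualification, which also pairs $Q'(u_c)$ against $w$. The argument can be repaired --- from the two-multiplier relation, $u_c$ weakly solves $-(1+2\mu_c)\Delta u_c+(1+\mu_c)(|x|^{-1}*|u_c|^2)u_c-(1+\mu_c\frac{3(p-2)}{2})|u_c|^{p-2}u_c=\lambda_c u_c$; provided $1+2\mu_c\neq 0$ elliptic regularity applies, the Pohozaev identity for \emph{that} equation holds rigorously, and combined with $Q(u_c)=0$ and the sign conditions it forces $\mu_c=0$ (the degenerate case $1+2\mu_c=0$ must be excluded separately) --- but none of this is in your write-up. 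The paper's deformation proof is designed precisely to bypass this regularity issue.
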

\begin{proof} From Lagrange multiplier theory, to prove the lemma, it suffices to show that any $u_c  \in \mathcal{M}_c $ is a critical point of $F(u)$ constrained on $S(c)$. 

Let $u_c \in \mathcal{M}_c $ and assume, by contradiction, that $\|F'|_{S(c)}(u_c)\|_{H^{-1}(\R ^3)} \neq 0$.  Then, by the continuity of $F'$, there exist $\delta >0, \mu >0$ such that
$$ v \in B_{u_c}(3\delta)\ \Longrightarrow \ \|F'|_{S(c)}(v)\|_{H^{-1}(\R ^3)} \geq \mu,$$
where $B_{u_c}(\delta) : =\{v\in S(c)  \ :\ \|v-u_c\| \leq \delta \}$.

Let $\varepsilon:= \min\{\gamma(c)/4, \mu \delta /8 \}$. We claim that it is possible to construct a deformation on $S(c)$ such that
\begin{itemize}
  \item [(i)] $\eta(1,v)=v$ if $v \notin F^{-1}([\gamma(c)-2\varepsilon, \gamma(c)+2\varepsilon])$,
  \item [(ii)] $\eta(1,F^{\gamma(c)+\varepsilon}\bigcap B_{u_c}(\delta)) \subset F^{\gamma(c)-\varepsilon}$,
  \item [(iii)] $F(\eta(1,v)) \leq F(v)$, $\forall\ v \in S(c)$.
\end{itemize}
Here, $F^d:= \{u\in S(c) :  F(u)\leq d \}$. For this we use the pseudo gradient flow on $S(c)$ defined in (\ref{Flow}) but where now  $g: S(c) \rightarrow [0,\delta]$ satisfies
\begin{equation*}
g(v)=\left\{
\begin{array}{ll}
\delta\  \text{ if }  v \in B_{u_c}(2\delta) \bigcap F^{-1}([\gamma(c)-\varepsilon, \gamma(c)+\varepsilon]) \\
0\   \text{ if }  v \notin F^{-1}([\gamma(c)-2\varepsilon, \gamma(c)+2\varepsilon]). 
\end{array}
\right.
\end{equation*}
With this definition clearly (i) and (iii) hold. To prove (ii) first observe that  if $v \in F^{\gamma(c)+\varepsilon}\bigcap B_{u_c}(\delta)$, then $\eta(t,v) \in B_{u_c}(2\delta)$ for all $ t\in [0,1]$. Indeed
\begin{eqnarray*}
\left \| \eta (t,v)-v \right \|&=& \left \| \int_{0}^{t}-g(\eta (s,v))\frac{Y(\eta (s,v))}{\left \| Y(\eta (s,v)) \right \|}ds \right \| \\
&\leq& \int_{0}^{t} \left \| g(\eta (s,v)) \right \|ds \leq t \delta \leq \delta.
\end{eqnarray*}
In particular for $ s \in [0,1], g(\eta(s,v)) = \delta$ as long as $F(\eta(s,v)) \geq \gamma(c) - \varepsilon.$ Thus if we assume that there exists a $v \in F^{\gamma(c)+\varepsilon}\bigcap B_{u_c}(\delta)$ such that $F(\eta(1,v)) > \gamma(c) - \varepsilon$ we have
\begin{eqnarray*}
F(\eta (1,v))&=& F(v)+ \int_{0}^{1} \frac{d}{dt}F(\eta (t,v))dt \\
&=&  F(v)+ \int_{0}^{1} \langle dF(\eta (t,v)), -g(\eta (t,v)) \frac{Y(\eta (t,v))}{\left \| Y(\eta (t,v)) \right \|} \rangle dt  \\
&\leq & F(v)- \frac{\mu \delta}{4}\leq \gamma(c) + \varepsilon - \frac{\mu \delta }{4} \leq \gamma(c) - \varepsilon,
\end{eqnarray*}
i.e. $\eta (1,v) \in F^{\gamma(c)-\varepsilon} $. This contradiction proves that (ii) also hold.\\

Now let $g \in \Gamma_c$ be the path constructed in the proof of Lemma \ref{mpestimate} by choosing $v= u_c \in V(c)$. We claim that
\begin{eqnarray}\label{www}
\max_{t \in [0,1]}F(\eta(1,g(t)))<\gamma(c).
\end{eqnarray}
By (i) and Remark \ref{allok} we have $\eta(1,g(t)) \in \Gamma_c$. Thus if (\ref{www}) holds, it contradicts the definition of $\gamma(c)$. To prove (\ref{www}), we distinguish three cases:\\
a) If $g(t) \in S(c) \setminus B_{u_c}(\delta)$, then using (iii) and Lemma \ref{growth} (6),
$$F(\eta(1,g(t)))\leq F(g(t))< F(u_c)=\gamma(c).$$
b) If $g(t) \in F^{\gamma(c)-\varepsilon}$, then by (iii)
$$F(\eta(1,g(t)))\leq F(g(t))\leq \gamma(c)-\varepsilon.$$
c) If $g(t) \in F^{-1}([\gamma(c)-\varepsilon, \gamma(c)+ \varepsilon])\bigcap B_{u_c}(\delta)$, then by (ii)
$$F(\eta(1,g(t)))\leq \gamma(c)-\varepsilon.$$
Note that since $F(g(t)) \leq \gamma(c)$, for all $t \in [0,1]$ one of the three cases above must occurs. This proves that (\ref{www}) hold and the proof of the lemma is completed.
\end{proof}

\begin{proof} [Proof of Theorem \ref{naturalconstraint}]
We know from Lemma \ref{minimizer} that to each $u_c \in \mathcal{M}_c $ is associated a $\lambda_c \in \R$ such that $(u_c, \lambda_c) \in H^1(\R^3) \times \R$ is  solution of (\ref{eq}). Now using Lemmas \ref{prop1} and \ref{strictdecrease} we deduce that necessarily $\lambda_c \leq 0$.
\end{proof}

In view of Theorem \ref{naturalconstraint} it is reasonable to wonder if a solution of (\ref{eq}) may exists for a $\lambda > 0$. In that direction we have

\begin{lem}\label{lm1.6} We assume that $p \in (3,6)$ and $\lambda >0$, then the equation
$$-\Delta v-\lambda v+(\left | x \right |^{-1} \ast \left | v \right |^2) v-|v|^{p-2}v = 0\ \mbox{ in }\ \R^3$$
has only  trivial solution in $H_r^1(\mathbb{R}^3)$.
\end{lem}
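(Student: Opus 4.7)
Plan: I would argue by contradiction: suppose $v\in H^1_r(\R^3)\setminus\{0\}$ is a real solution for some $\lambda>0$. Testing the equation against $v$ and against $x\cdot\nabla v$ produces, exactly as in the proof of Lemma \ref{pohoz}, the Nehari identity $A(v)+B(v)+C(v)=\lambda D(v)$ and the Pohozaev identity $\tfrac12 A(v)+\tfrac54 B(v)+\tfrac{3}{p}C(v)=\tfrac{3\lambda}{2}D(v)$. Eliminating $C(v)$ gives $Q(v)=0$ together with
$$\frac{p-6}{3(p-2)}A(v)+\frac{5p-12}{6(p-2)}B(v)=\lambda D(v),$$
and for $p\in(3,6)$ both identities are perfectly consistent with $\lambda>0$. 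Thus these variational relations cannot, by themselves, rule out $v\neq 0$, and the radial structure must be exploited.

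By the Strauss radial embedding one has $|v(x)|\leq C\|v\|_{H^1}|x|^{-1}$ for $|x|\geq 1$, so $|v(x)|^{p-2}=o(|x|^{-1})$ because $p>3$. Newton's theorem for the Coulomb convolution yields
$$(|x|^{-1}\ast|v|^2)(x)=\frac{1}{|x|}\int_{|y|\leq|x|}|v|^2\,dy+\int_{|y|>|x|}\frac{|v(y)|^2}{|y|}\,dy=\frac{\|v\|_2^2}{|x|}+o\!\left(\tfrac{1}{|x|}\right)$$
as $|x|\to\infty$. Writing $W(x):=(|x|^{-1}\ast|v|^2)(x)-|v(x)|^{p-2}$, the equation becomes $-\Delta v+W(x)v=\lambda v$ with $W(x)\to 0$, the leading part of $W$ being a positive (repulsive) Coulomb-like tail $\|v\|_2^2/|x|$.

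Setting $u(r):=rv(r)$, one has $u\in L^2(0,\infty)$ and $u$ satisfies the one-dimensional Schr\"odinger equation $u''+[\lambda - W(r)]u=0$. A Pr\"ufer transformation $u=R\sin\theta$, $u'=\sqrt{\lambda}R\cos\theta$ converts this into
$$\theta'=\sqrt{\lambda}-\frac{W(r)}{\sqrt{\lambda}}\sin^2\theta,\qquad (\log R)'=\frac{W(r)}{2\sqrt{\lambda}}\sin 2\theta.$$
Since $\theta'(r)\to\sqrt{\lambda}>0$ and $W(r)\sim c_0/r$, one integration by parts (Dirichlet's test) shows that $\int^{\infty}W(r)\sin 2\theta(r)\,dr$ converges; hence $\log R$ admits a finite limit and $R(r)\to R_{\infty}>0$. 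Because $\theta(r)=\sqrt{\lambda}\,r+O(\log r)$, the function $\sin^2\theta$ averages to $1/2$ on large intervals, forcing $\int_{R_0}^{\infty}u^2\,dr=\int_{R_0}^{\infty}R^2\sin^2\theta\,dr=+\infty$ and contradicting $u\in L^2$.

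The main obstacle is exactly this last step. Because the nonlocal term makes $W$ decay only as $1/r$, one cannot invoke Kato's classical absence-of-positive-eigenvalues theorem off the shelf, and the convergence of the oscillatory integral $\int W\sin 2\theta\,dr$ rests on a careful use of the repulsive Coulomb asymptotics of $W$ together with the slow, essentially linear behaviour of the Pr\"ufer phase $\theta(r)$. Making those asymptotics and the resulting Dirichlet-type cancellation rigorous is the technical heart of the proof.
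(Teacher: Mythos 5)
Your proposal reaches the correct conclusion but by a genuinely different final step than the paper. The first half is identical: the paper also observes that the variational identities are inconclusive here, and it derives the same asymptotics of the effective potential from the Strauss radial decay lemma (giving $|v|^{p-2}=O(|x|^{-(p-2)})$ with $p-2>1$) and Newton's theorem (giving $|x|^{-1}\ast|v|^2\leq \|v\|_2^2/|x|$). Where you diverge is at the end: the paper simply invokes a lemma of Kato stating that $-\Delta v+q(r)v=mv$, $m>0$, has no nontrivial $H^1_r(\R^3)$ solution when $q(r)=o(r^{-1})$, whereas you re-derive the non-$L^2$ conclusion from scratch via the reduction $u=rv$, a Pr\"ufer transformation, and a Dirichlet-test argument showing the amplitude $R(r)$ tends to a positive limit. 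Your instinct that Kato's $o(r^{-1})$ hypothesis cannot be invoked ``off the shelf'' is in fact well-founded and flags an imprecision in the paper's own write-up: the bound $|q(r)|\leq C/|x|$ established there is only $O(r^{-1})$, and by Newton's theorem the nonlocal term is genuinely asymptotic to $\|v\|_2^2/|x|$, not $o(r^{-1})$, unless $v\equiv 0$. What saves the argument in either formulation is the sign: the Coulomb tail is positive (repulsive), so it cannot create a positive eigenvalue, and your Pr\"ufer computation makes this quantitative. The trade-off is that your route is self-contained and handles the honest $O(1/r)$ tail, but the convergence of $\int^{\infty}W(r)\sin 2\theta(r)\,dr$ is only sketched; to close it one should note that the nonlocal part $\Phi(r)=(|x|^{-1}\ast|v|^2)(r)$ is positive and monotone decreasing (its derivative is $-r^{-2}\int_{|y|\leq r}|v|^2\,dy$), so Dirichlet's test applies to it directly, while the local part $|v|^{p-2}$ is absolutely integrable. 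With that addition your argument is complete and arguably more robust than the paper's citation of Kato.
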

To prove Lemma \ref{lm1.6} we use the following result due to Kato.
\begin{lem}(Kato \cite{TK})\label{lm1.7}
If we assume that the radial function $q(r)$ satisfies $q(r)=o(r^{-1})$ as $r \to \infty$ and that $m>0$ is a constant then the equation
\begin{eqnarray}\label{1.27}
-\Delta v+q(r)v= m v ,
\end{eqnarray}
has no nontrivial solution in $H_r^1(\mathbb{R}^3)$.
\end{lem}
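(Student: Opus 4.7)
The plan is to reduce the PDE to a second-order linear ODE for the radial profile, and then to show via an energy-plus-oscillation argument that the hypothesis $q(r)=o(1/r)$ (strictly sharper than $O(1/r)$) together with the positivity of $m$ forces any $L^{2}$ solution of the ODE to be identically zero. First I would pass to radial coordinates, writing $v=v(r)$ and $w(r):=r\,v(r)$. A direct computation turns \eqref{1.27} into the one-dimensional Schr\"odinger equation
\begin{equation*}
w''(r)+\bigl(m-q(r)\bigr)w(r)=0,\qquad r>0,
\end{equation*}
with $w(0)=0$. The assumption $v\in H_{r}^{1}(\R^{3})$ translates into $w,w'\in L^{2}(0,\infty)$, and by interior regularity $w\in C^{2}((0,\infty))$.

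Next I would set up the energy $\mu(r):=|w(r)|^{2}+|w'(r)|^{2}/m$ and differentiate, using the ODE, to obtain
\begin{equation*}
\mu'(r)=\frac{2q(r)}{m}\,\mathrm{Re}\bigl(w(r)\overline{w'(r)}\bigr),
\end{equation*}
hence $|\mu'(r)|\leq C_{m}\,|q(r)|\,\mu(r)$ for a constant $C_{m}$ depending only on $m$. Fixing any $\varepsilon>0$, the hypothesis $q(r)=o(1/r)$ provides $R_{\varepsilon}$ with $|q(r)|\leq\varepsilon/r$ for $r\geq R_{\varepsilon}$, and Gronwall's inequality then yields
\begin{equation*}
\mu(R_{\varepsilon})\left(\tfrac{r}{R_{\varepsilon}}\right)^{-C_{m}\varepsilon}\leq \mu(r)\leq \mu(R_{\varepsilon})\left(\tfrac{r}{R_{\varepsilon}}\right)^{C_{m}\varepsilon},\qquad r\geq R_{\varepsilon}.
\end{equation*}
In particular $\mu$ decays no faster than $r^{-C_{m}\varepsilon}$.

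The crucial step is then to exploit the oscillatory nature of $w$ via a Pr\"ufer transformation $w=\rho\sin\theta$, $w'/\sqrt{m}=\rho\cos\theta$ (so that $\rho^{2}=\mu$). From the ODE one checks $\theta'(r)=\sqrt{m}+O(|q(r)|)\to\sqrt{m}$, so $\theta$ sweeps arbitrarily many full oscillations as $r\to\infty$, while $\rho$ varies by only a factor $1+O(\varepsilon)$ over any single oscillation period of length $\pi/\sqrt{m}$. Averaging $\int\sin^{2}\theta\sim 1/2$ on each such period and using the lower bound on $\mu$ would give
\begin{equation*}
\int_{R_{\varepsilon}}^{\infty}|w(r)|^{2}\,dr\geq c\int_{R_{\varepsilon}}^{\infty}\rho(r)^{2}\,dr\geq c'\,\mu(R_{\varepsilon})\int_{R_{\varepsilon}}^{\infty}r^{-C_{m}\varepsilon}\,dr,
\end{equation*}
for constants $c,c'>0$. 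Choosing $\varepsilon<1/C_{m}$ makes the right-hand side infinite unless $\mu(R_{\varepsilon})=0$. Since $w\in L^{2}(0,\infty)$, we must conclude $\mu(R_{\varepsilon})=0$, so $w(R_{\varepsilon})=w'(R_{\varepsilon})=0$, and uniqueness for the linear ODE gives $w\equiv 0$, i.e.\ $v\equiv 0$.

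I expect the main obstacle to be the averaging step: one must verify that the slow variation of $\rho$ is quantitative enough relative to the oscillations of $\sin\theta$ that the successive half-period contributions to $\int w^{2}$ cannot be systematically cancelled by unlucky phase alignment. This is exactly where $o(1/r)$ (as opposed to only $O(1/r)$) is essential, since inverse-linear Wigner--von Neumann type potentials are known to produce genuine embedded eigenvalues at positive $m$. An alternative would be to invoke directly Kato's 1959 multiplier argument from \cite{TK}, but the radial Pr\"ufer approach sketched above is more elementary and self-contained.
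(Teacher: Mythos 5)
Your argument is essentially correct, but note that the paper does not prove this lemma at all: it is quoted verbatim from Kato's 1959 paper \cite{TK} and used as a black box in the proof of Lemma 7.1. So there is nothing in the text to compare against; what you have supplied is a self-contained proof of the cited result in the radial setting, which is exactly the generality the paper needs. Your reduction $w=rv$, the energy $\mu=|w|^{2}+|w'|^{2}/m$ with $\mu'=\tfrac{2q}{m}\,\mathrm{Re}(w\overline{w'})$, and the Gronwall bound $\mu(r)\geq\mu(R_{\varepsilon})(r/R_{\varepsilon})^{-\varepsilon/\sqrt{m}}$ all check out, and choosing $\varepsilon<\sqrt{m}$ makes $\mu\notin L^{1}(R_{\varepsilon},\infty)$ unless $\mu(R_{\varepsilon})=0$. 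Two remarks. First, the obstacle you flag at the end is not actually there: since $|w|^{2}=\rho^{2}\sin^{2}\theta\geq 0$, successive half-periods can only add, never cancel, and over any interval on which $\theta$ advances by $\pi$ one has $\int\sin^{2}\theta\,dr\geq c/\sqrt{m}$ regardless of the phase at the endpoints, while $\rho^{2}$ moves by a factor $1+o(1)$ there because $\int_{I_{k}}|q|\leq\varepsilon\log(r_{k+1}/r_{k})\to 0$; so $\int_{I_{k}}|w|^{2}\geq c'\int_{I_{k}}\mu$ and the contradiction with $w\in L^{2}$ is immediate. (If you prefer to avoid the Pr\"ufer transformation altogether, integrate $\int|w'|^{2}$ by parts against the ODE to get $\int_{R}^{r_{n}}|w'|^{2}\leq C+ (m+o(1))\int_{R}^{\infty}|w|^{2}$ along a sequence $r_{n}$ with $\mu(r_{n})\to\liminf\mu$, which shows directly that $\mu\in L^{1}$.) Second, two small points of hygiene: the Pr\"ufer substitution requires real $w$, so apply it separately to the real and imaginary parts (the coefficients are real); and the lemma implicitly assumes enough local regularity of $q$ for the ODE reduction and uniqueness step, which is harmless in the application since there $q=|x|^{-1}*|v|^{2}-|v|^{p-2}$ is continuous. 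Your closing observation that $o(r^{-1})$ is sharp because of Wigner--von Neumann potentials is exactly the right justification for why the hypothesis cannot be relaxed to $O(r^{-1})$. Kato's theorem in \cite{TK} is considerably more general (non-radial solutions in exterior domains), but your radial version is all that Lemma 7.1 uses, and the elementary proof is a worthwhile addition.
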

\begin{proof}[Proof of the Lemma \ref{lm1.6}] We change the equation into the form of (\ref{1.27}):
\begin{eqnarray}\label{1.28}
-\Delta v+q(r)v=\lambda_c v,
\end{eqnarray}
where here $q(r)=|x|^{-1} * |v|^2-|v|^{p-2}$. To prove this lemma it suffices to verify the condition $q(r)=o(r^{-1})$ as $r \to \infty$. By the results of \cite{S} on the decay of radial functions there exists a $\widetilde{c}>0$, such that
$$|v(x)|\leq \widetilde{c} \cdot \|v\| \cdot \frac{1}{|x|}, \quad |x| \geq 1.$$
Thus since $p\geq 3$ it follows that
\begin{eqnarray*}
|v(x)|^{p-2}\leq \widetilde{c} \cdot \|v\|^{p-2} \cdot \frac{1}{|x|},\ \mbox{ when } \ |x|>0 \mbox{ is large.}
\end{eqnarray*}
Now for any radial function $f(x)=f(|x|)$ by the \emph{Newton's Theorem}, see \cite{LB},
$$\int_{\mathbb{R}^N}\frac{f(|y|)}{|x-y|^{N-2}}dy=\int_{\mathbb{R}^N}\frac{f(|y|)}{\max \{|x|,|y|\}}dy.$$
Thus we have
\begin{eqnarray*}
|x|^{-1}* |v|^2 &=&\int_{\mathbb{R}^3}\frac{|v(|y|)|^2}{|x-y|}dy=\int_{\mathbb{R}^3}\frac{|v(|y|)|^2}{\max \{|x|,|y|\}}dy\\
&\leq& \int_{\mathbb{R}^3}\frac{|v(|y|)|^2}{|x|}dy=\frac{\|v\|_2^2}{|x|}.
\end{eqnarray*}
Hence
\begin{eqnarray*}
|q(r)|&\leq& ||x|^{-1}* |v|^2|+|v|^{p-2}\\
&\leq&\frac{1}{|x|}(\|v\|_2^2+\widetilde{c}\cdot \|v\|^{p-2}),\ \mbox{ for } \ |x|>0 \mbox{ large},
\end{eqnarray*}
and $q(r)=o(r^{-1})$ as $r \to \infty$.
\end{proof}

\begin{proof}[Proof of Lemma \ref{description}]
Let $u_c \in H^1(\mathbb{R}^3, \mathbb{C})$ with $u_c\in V(c)$. Since $\|\nabla |u_c|\|_2\leq \|\nabla u_c \|_2$ we have that $F(|u_c|)\leq F(u_c)$ and $Q(|u_c|)\leq Q(u_c)=0$. In addition, by Lemma 2.2, there exists $t_0\in (0,1]$ such that $Q(|u_c|^{t_0})=0$. We claim that
\begin{equation}\label{below}
F(|u_c|^{t_0})\leq t_0 \cdot F(u_c).
\end{equation}
Indeed, due (\ref{sisi2}) and since $Q(|u_c|^{t_0})=Q(u_c)=0$, we have
\begin{eqnarray*}
F(|u_c|^{t_0}) &=& t_0^2\cdot \frac{3p-10}{6(p-2)}\|\nabla |u_c|\|_2^2 + t_0 \cdot \frac{3p-8}{12(p-2)}T(|u_c|) \\
&=& t_0\cdot \left ( t_0 \cdot \frac{3p-10}{6(p-2)}\|\nabla |u_c|\|_2^2 + \frac{3p-8}{12(p-2)}T(u_c)  \right ) \\
&\leq& t_0\cdot \left ( \frac{3p-10}{6(p-2)}\|\nabla u_c\|_2^2 + \frac{3p-8}{12(p-2)}T(u_c)  \right ) \\
&=& t_0 \cdot F(u_c).
\end{eqnarray*}
Thus if $u_c \in H^1(\mathbb{R}^3, \mathbb{C})$ is a minimizer of $F(u)$ on $V(c)$ we have
$$F(u_c)=\inf_{u \in V(c)}F(u)\leq F(|u_c|^{t_0})\leq t_0 \cdot F(u_c),$$
which implies $t_0=1$ since $t_0 \in (0,1]$. Then $Q(|u_c|)=0$ and we conclude that
\begin{eqnarray}
\|\nabla |u_c|\|_2 =  \|\nabla u_c \|_2\quad \mbox{and\quad } F(|u_c|)=F(u_c).
\end{eqnarray}
Thus point (i) follows. Now since $|u_c|$ is a minimizer of $F(u)$ on $V(c)$ we know by Theorem \ref{naturalconstraint} that it satisfies (\ref{eq}) for some $\lambda_c \leq 0$. By elliptic regularity theory and the maximum principle it follows that
$|u_c| \in C^1(\R^3, \R)$ and $|u_c|>0$. At this point, using that $\|\nabla |u_c|\|_2 =  \|\nabla u_c \|_2$ the rest of the proof of point (ii) is exactly the same as in the proof of Theorem 4.1 of \cite{HAST}.
\end{proof}

\section{Proof of Theorems \ref{maingamma} and  \ref{th3.1}  }\label{Section6}

 In \cite{IARU} the authors consider the functional $F(u)$ as a free functional defined in the real space
\begin{eqnarray*}
E := \{ u \in D^{1,2}(\R^3) : \int_{\R^3}\int_{\R^3} \frac{u^2(x)u^2(y)}{|x-y|}dxdy < \infty \}
\end{eqnarray*}
equipped with the norm
$$||u||_E := \Big( \int_{\R^3} |\nabla u(x)|^2 dx + \Big( \int_{\R^3}\int_{\R^3} \frac{u^2(x)u^2(y)}{|x-y|}dxdy\Big)^{\frac{1}{2}}\Big)^{\frac{1}{2}}.$$
Clearly $H^1(\R^3, \R) \subset E$. They show, see Theorem 1.1 and Proposition 3.4 in \cite{IARU}, that $F(u)$ has in $E$ a least energy solution whose energy is given by the mountain pass level
\begin{equation}\label{ruizlevel}
m := \inf_{\gamma \in \Gamma} \max_{t \in [0,1]}F(\gamma(t)) >0
\end{equation}
where
$$\Gamma:= \{ \gamma \in C([0,1], E), \gamma(0) =0, F(\gamma(1)) < 0\}.$$
\begin{lem}\label{boundbelow}
For any $c>0$ we have $\gamma(c) \geq m$ where $m >0$ is given in (\ref{ruizlevel}).
\end{lem}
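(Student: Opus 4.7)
The plan is to show that every path $g\in \Gamma_c$ can be prolonged by a prefix joining $0$ to $g(0)$ inside $E$ along which the energy $F$ never exceeds $F(g(0))$. The concatenated path will lie in $\Gamma$ and have the same maximum as $g$, so passing to infima yields $m\leq\gamma(c)$.

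First, I would observe that we may assume the constant $K_c$ in Definition \ref{defMP} is as small as we wish; in particular, by Lemma \ref{lm1.1}(2) we may take $K_c\leq k_0$, so that $Q(u)>0$ for every $u\in A_{K_c}$. In view of Remark \ref{allok} we can also assume $F(u)\leq\gamma(c)/2$ on $A_{K_c}$. Now, for $g\in\Gamma_c$, let $u_0:=g(0)$ and consider the Cazenave scaling $u_0^t(x)=t^{3/2}u_0(tx)$, extended by $u_0^0:=0$. Using $A(u_0^t)=t^2A(u_0)$ and $B(u_0^t)=tB(u_0)$, a direct computation in the norm of $E$ (which is controlled by $A$ and $B^{1/2}$) shows that $t\mapsto u_0^t$ is continuous from $[0,1]$ into $E$ with $u_0^0=0$ and $u_0^1=u_0$.

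Next, since $Q(u_0)>0$, Lemma \ref{growth}(3)--(4) gives $t^*(u_0)>1$, so by Lemma \ref{growth}(5) we have $Q(u_0^t)>0$ for every $t\in(0,1]$. Combined with Lemma \ref{growth}(7), this yields
\begin{equation*}
\frac{d}{dt}F(u_0^t)=\frac{1}{t}Q(u_0^t)>0,\qquad t\in(0,1],
\end{equation*}
so $t\mapsto F(u_0^t)$ is strictly increasing on $[0,1]$ (using also $F(u_0^t)\to 0$ as $t\to 0^+$, by the explicit expression for $F(u_0^t)$). Consequently
\begin{equation*}
\max_{t\in[0,1]}F(u_0^t)=F(u_0)=F(g(0)).
\end{equation*}

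Finally, I would concatenate: define $\tilde g\colon[0,1]\to E$ by $\tilde g(s)=u_0^{2s}$ for $s\in[0,\tfrac12]$ and $\tilde g(s)=g(2s-1)$ for $s\in[\tfrac12,1]$. This path is continuous in $E$ (noting $H^1(\R^3)\hookrightarrow E$), satisfies $\tilde g(0)=0$ and $F(\tilde g(1))=F(g(1))<0$, hence $\tilde g\in\Gamma$. Moreover, by the previous step,
\begin{equation*}
\max_{s\in[0,1]}F(\tilde g(s))=\max\bigl\{F(g(0)),\,\max_{t\in[0,1]}F(g(t))\bigr\}=\max_{t\in[0,1]}F(g(t)),
\end{equation*}
since $F(g(0))\leq\gamma(c)/2\leq\max_tF(g(t))$. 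Therefore $m\leq\max_sF(\tilde g(s))=\max_tF(g(t))$, and taking the infimum over $g\in\Gamma_c$ gives $m\leq\gamma(c)$.

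The main subtlety is not the geometric construction itself but verifying that we are free to shrink $K_c$ without destroying the mountain pass geometry, and that the scaling prefix indeed stays in $E$ and is continuous there; both follow, respectively, from Remark \ref{allok}/the proof of Theorem \ref{thm1.2}, and from the scaling formulas for $A$ and $B$ together with the inclusion $H^1(\R^3)\subset E$.
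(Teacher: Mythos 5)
Your overall strategy coincides with the paper's: turn a path $g\in\Gamma_c$ into an admissible path for $m$ by prepending a prefix from $0$ to $g(0)$ whose energy never exceeds $\max_t F(g(t))$, then pass to infima. The prefix itself is built differently, and your version is arguably cleaner. The paper goes $0\to v^{\theta}\to g(0)\to g(1)$, using a linear segment $t\mapsto 4tv^{\theta}$ followed by an (asserted) connecting path inside $A_{K_c}$, and controls the energy on the prefix by the bound $F\leq \gamma(c)/2$ on $A_{K_c}$ (Remark \ref{allok}). You instead scale $g(0)$ itself to $0$ along $t\mapsto u_0^t$ and control the energy by the monotonicity $\frac{d}{dt}F(u_0^t)=\frac1t Q(u_0^t)>0$ coming from Lemma \ref{growth}; this avoids having to produce a path joining $v^{\theta}$ to $g(0)$ inside $A_{K_c}$, a step the paper does not justify. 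Your monotonicity argument is correct: $Q(g(0))>0$ forces $t^*(g(0))>1$, hence $Q(u_0^t)>0$ on $(0,1]$, hence $\max_{[0,1]}F(u_0^t)=F(g(0))\leq\max_t F(g(t))$ (the last inequality is even trivial since $g(0)$ lies on $g$). Two remarks on your preliminary reduction: you do not actually need to shrink $K_c$, since the $K_c=k_1$ fixed in the proof of Theorem \ref{thm1.2} already satisfies $k_1<k_2$ with $Q>0$ on $\{A(u)<k_2\}$ by \eqref{dadim}; and if you do shrink it you should note that this shrinks $\Gamma_c$ and a priori raises the infimum, so you must invoke Lemma \ref{mpestimate} ($\gamma(c)=\inf_{V(c)}F$, independent of the admissible $K_c$) to see that the value is unchanged.

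There is, however, one genuine omission. The class $\Gamma$ and the level $m$ from \cite{IARU} live in the \emph{real} space $E$, whereas $S(c)\subset H^1(\R^3,\C)$ and a path $g\in\Gamma_c$ is a priori complex-valued; your concatenated path $\tilde g$ therefore need not belong to $\Gamma$ as defined, and the inequality $m\leq\max_s F(\tilde g(s))$ does not follow. This is precisely the first step of the paper's proof: using Lemma \ref{description} (the infimum of $F$ on $V(c)$ is attained by real functions) together with $\gamma(c)=\gamma_1(c)=\gamma_2(c)$, one first reduces the computation of $\gamma(c)$ to paths in $H^1(\R^3,\R)$, and only then performs the concatenation. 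With that reduction inserted at the start, your argument goes through.
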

\begin{proof}
We fix an arbitrary $c >0$. From Lemma \ref{description} we know
that the infimum of $F(u)$ on $V(c)$ is reached by real functions.
As a consequence in the definition of $\gamma(c)$, see in particular
(\ref{1}), we can restrict ourself to paths in $H^1(\R^N, \R)$ instead
of $H^1(\R^N, \C)$. To prove the lemma it suffices to show that for
any $g \in \Gamma_c$ there exists a $\gamma \in \Gamma$ such that
\begin{equation}\label{control}
\max_{t \in [0,1]}F(g(t)) \geq \max_{t \in [0,1]}F(\gamma(t)).
\end{equation}
Let $v\in S(c)$ be arbitrary but fixed. Letting  $v^{\theta}(x)=\theta^{\frac{3}{2}}v(\theta x)$ we have $v^{\theta}\in S(c)$ for any $\theta>0$. Also taking $\theta>0$ sufficiently small, $v^{\theta} \in A_{K_c}$. Now for $g \in \Gamma_c$ 
arbitrary but fixed, let $\gamma_{\theta}(t)\in
C([\frac{1}{4},\frac{1}{2}],A_{K_c})$ satisfies
$\gamma_{\theta}(\frac{1}{4})=v^{\theta}, \gamma_{\theta}(\frac{1}{2})=g(0)$, and consider $\gamma(t)$ given by
$$ \gamma(t) = \left \{
\begin{matrix}
\ 4tv^{\theta},\qquad \ 0 \leq t \leq \frac{1}{4}, \\
\ \gamma_{\theta}(t),\qquad  \frac{1}{4} \leq t \leq \frac{1}{2}, \\
\ g(2t-1), \quad \frac{1}{2} \leq t \leq 1.
\end{matrix}
\right.$$
Since $S(c)\subset H^1(\R^3) \subset E$ by construction $\gamma \in \Gamma$. Now direct calculations show that, taking $\theta>0$ small enough,  $F(4tv^{\theta})\leq F(v^{\theta})$ for any $t\in[0,\frac{1}{4}]$. Thus
$$\max_{t \in [0,1]}F(\gamma(t)) = \max_{t \in [\frac{1}{4},1]} F(\gamma(t)).$$
Recalling that $\gamma_{\theta}(t) \in A_{K_c}$ for any $t \in [ \frac{1}{4},
\frac{1}{2}]$, we conclude from Theorem \ref{thm1.2} that
$$\max_{t \in [0,1]}F(\gamma(t)) = \max_{t \in [\frac{1}{2},1]} F(\gamma(t))  = \max_{t \in [0,1]}F(g(t))$$
and (\ref{control}) holds. This proves the lemma.

\end{proof}

\begin{lem}\label{liminf}
There exists $\gamma(\infty) >0$ such that $\gamma(c) \to \gamma(\infty)$ as $c \to \infty.$
\end{lem}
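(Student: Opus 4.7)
The plan is to observe that the conclusion is essentially a direct consequence of two results already established in the excerpt, namely the monotonicity of the mountain pass level in $c$ and its uniform positive lower bound. Concretely, Lemma \ref{prop1} asserts that the map $c \mapsto \gamma(c)$ is non-increasing on $(0,\infty)$, while Lemma \ref{boundbelow} provides the uniform estimate $\gamma(c) \geq m > 0$ for every $c > 0$, where $m$ is the mountain pass level of $F$ on the free space $E$ defined in \eqref{ruizlevel}.

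Given these two ingredients, the real-valued function $c \mapsto \gamma(c)$ is monotone non-increasing and bounded below by the strictly positive constant $m$. By the standard monotone convergence property of real-valued functions of a real variable, the limit
\[
\gamma(\infty) \, := \, \lim_{c \to \infty} \gamma(c) \, = \, \inf_{c > 0} \gamma(c)
\]
exists in $\mathbb{R}$, and the lower bound yields $\gamma(\infty) \geq m > 0$, which is exactly the conclusion of the lemma.

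There is no genuine obstacle here: both inputs have been proved earlier, so the proof reduces to citing them and invoking monotone convergence. The only point worth making explicit in the write-up is that one should indeed take $\gamma(\infty) := \inf_{c > 0} \gamma(c)$, which by non-increasing monotonicity coincides with the limit along any sequence $c_n \to \infty$, and that the positivity $\gamma(\infty) > 0$ is ensured by Lemma \ref{boundbelow} rather than being deduced a posteriori.
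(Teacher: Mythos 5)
Your proposal is correct and follows exactly the paper's own argument: the paper's proof of this lemma likewise combines the monotonicity from Lemma \ref{prop1} with the uniform lower bound $\gamma(c) \geq m > 0$ from Lemma \ref{boundbelow} and invokes the monotone convergence of a bounded non-increasing function. Nothing is missing.
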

\begin{proof}
The existence of a limit follows directly from the fact that $c \to \gamma(c)$ is non-increasing. Now because of Lemma \ref{boundbelow} the limit is strictly positive.
\end{proof}

\begin{proof} [Proof of Theorem \ref{th3.1}] As we already mentioned this proof is largely due to L. Dupaigne. It also uses arguments from \cite{TC} and \cite{FO}. We  divide the proof into two steps. \\

\emph{Step 1}: Regularity and vanishing: let $(u, \lambda)$ with $u \in E$ and $\lambda \leq 0$ solves (\ref{eq}), then $u \in L^{\infty}(\R^{3})\bigcap C^1(\R^{3})$ and $u(x) \rightarrow 0$, as $|x| \to \infty$.\\

We set $\phi_{u}(x)= \frac{1}{4 \pi |x|}* u^2.$ Clearly since $u \in E$ then $\phi_{u} \in D^{1,2}(\R^3)$. We denote $H=-\Delta + ( 1 - \lambda )$. Since $\lambda \leq 0$, $H^{-1}$ exists in $L^{\eta}(\R^3)$ for all $\eta \in (1,\infty)$. The operators $H$ and $-\Delta$ being closed in $L^{\eta}(\R^3)$ with domain $D(H)\subset D(-\Delta)$, it follows from the Closed Graph Theorem that there exists a constant $\widetilde{C} >0$ such that
\begin{eqnarray}\label{va1}
\|\Delta u \|_{\eta} \leq \widetilde{C} \|H u\|_{\eta},
\end{eqnarray}
 for any  $u \in D(H)$. Now we write (\ref{eq}) as
\begin{eqnarray}\label{va3}
u=H^{-1}u-H^{-1}(\phi_{u} u)+ H^{-1}(|u|^{p-2}u)
\end{eqnarray}
and we claim that
\begin{equation}\label{keys}
H^{-1} u \in L^3 \cap L^{\infty}(\mathbb{R}^3) \mbox{ and } H^{-1}(\phi_{u} u) \in L^2 \cap L^{\infty}(\mathbb{R}^3).
\end{equation}
Indeed, $u\in L^q(\mathbb{R}^3)$ for all $q\in [3,6]$, see \cite{R2}, and from (\ref{va1}) and Sobolev's embedding theorem, we obtain
\begin{eqnarray}\label{va4}
H^{-1}u \in W^{2,q}(\mathbb{R}^3)\hookrightarrow L^{\infty}(\mathbb{R}^3),\ \forall\ q\in [3,6].
\end{eqnarray}
Now since $\phi_{u}\in \mathcal{D}^{1,2}(\R^3)\hookrightarrow L^6(\R^3)$, by H\"older inequality,  $\phi_{u}u\in L^t(\mathbb{R}^3)$ holds for any $t\in [2,3]$ and we have
\begin{eqnarray}\label{va5}
H^{-1}(\phi_{u}u) \in W^{2,t}(\mathbb{R}^3)\hookrightarrow L^{\infty}(\mathbb{R}^3),\ \forall\ t\in [2,3].
\end{eqnarray}
At this point the claim is proved.
Next we denote
\begin{eqnarray}\label{va6}
v:=u+H^{-1}(\phi_{u}u)-H^{-1}u.
\end{eqnarray}
By interpolation, and using (\ref{keys}), we see that $v\in L^q(\mathbb{R}^3)$ for all $q\in [3,6]$. Now since $u\in L^q(\R^3)$, for all $ q\in [3,6]$, (\ref{va3}) implies that
\begin{eqnarray}\label{va7}
H v=|u|^{p-2}u \in L^{\frac{q}{p-1}}(\mathbb{R}^3).
\end{eqnarray}
By (\ref{va1}) and Sobolev's embedding theorem, we conclude from (\ref{va7}) that
\begin{eqnarray}\label{va8}
v\in L^r(\mathbb{R}^3),\ \mbox{ for all } r\geq \frac{q}{p-1} \mbox{ such that } \frac{1}{r}\geq \frac{p-1}{q}-\frac{2}{3}.
\end{eqnarray}
Next we follow the arguments of Cazenave \cite{TC} to increase the index $r$.

For $j\geq 0$, we define $r_j$ as:
$$\frac{1}{r_j}=-\delta (p-1)^j+\frac{2}{3(p-2)},\ \mbox{ with } \delta= \frac{2}{3(p-2)}-\frac{1}{p}.$$
Since $p\in [3,6)$, then $\delta>0$ and $\frac{1}{r_j}$ is decreasing with $\frac{1}{r_j} \to -\infty$ as $j \to \infty$. Thus there exists some $k>0$ such that
$$\frac{1}{r_i}>0\ \mbox{ for }\ 0\leq i \leq k;\ \frac{1}{r_{k+1}}\leq 0.$$
Now we claim that $v\in L^{r_k}(\mathbb{R}^3)$. Indeed, $r_0=p$ as $j=0$ and it is trivial that $v\in L^{r_0}(\mathbb{R}^3)$. If we assume that 
$v\in L^{r_{i}}(\mathbb{R}^3)$ for $0 \leq i < k$, then by (\ref{va6}) and (\ref{keys}), we have $u\in L^{r_{i}}(\mathbb{R}^3)$. Thus following (\ref{va8}) we obtain
$$v\in L^r(\mathbb{R}^3),\ \mbox{ for all } r\geq \frac{r_{i}}{p-1} \mbox{ such that } \frac{1}{r}\geq \frac{p-1}{r_{i}}-\frac{2}{3}=\frac{1}{r_{i+1}}.$$
In particular, $v\in L^{r_{i+1}}(\mathbb{R}^3)$ and we conclude this claim by induction. Now
since $v\in L^{r_k}(\mathbb{R}^3)$ it follows from (\ref{va6}) and (\ref{keys})  that $u\in L^{r_k}(\mathbb{R}^3)$ and we get that
$$v\in L^r(\mathbb{R}^3),\ \mbox{ for all } r\geq \frac{r_k}{p-1} \mbox{ such that } \frac{1}{r}\geq \frac{p-1}{r_k}-\frac{2}{3}=\frac{1}{r_{k+1}}.$$
Since $1/r_{k+1}<0$ we obtain that $v\in \bigcap_{3\leq \alpha \leq \infty}L^{\alpha}(\mathbb{R}^3)$ and thus also $u\in \bigcap_{3\leq \alpha \leq +\infty}L^{\alpha}(\mathbb{R}^3)$.\\

At this point we have shown that
$$H u= u-\phi_{u} u + |u|^{p-2}u$$
with for all $\alpha \in [3,\infty] $,
$$ u\in L^{\alpha}\cap L^{\infty}(\mathbb{R}^3), \quad \phi_{u} u\in L^{\frac{6\alpha}{6+\alpha}}(\mathbb{R}^3) \quad \mbox{and} \quad  |u|^{p-2}u \in L^{\frac{\alpha}{p-1}}\cap L^{\infty} (\mathbb{R}^3).$$  Since $ \frac{6\alpha}{6+\alpha} \in [3,6] $ for $\alpha \in [6,\infty]$, by interpolation and (\ref{va1}) we obtain that
\begin{eqnarray}\label{va9}
u \in W^{2,\frac{6\alpha}{6+\alpha}}(\mathbb{R}^3)\ \mbox{ for any }\ \alpha \in [6,+\infty].
\end{eqnarray}
Thus by Sobolev's embedding, $u \in L^{\infty}(\mathbb{R}^3)\bigcap C^1(\mathbb{R}^3)$. Also there exists a sequence $\{u_n\}\subset C_c^1 (\mathbb{R}^3)$ such that $u_n \rightarrow u$ in $W^{2,\frac{6\alpha}{6+\alpha}}(\mathbb{R}^3)$. When $\alpha>6$, $W^{2,\frac{6\alpha}{6+\alpha}}(\mathbb{R}^3)\hookrightarrow  L^{\infty}(\mathbb{R}^3)$. Thus $u_n \rightarrow u$ uniformly in $\mathbb{R}^3$ and we conclude that $u(x)\rightarrow 0$ as $|x| \to \infty $.\\

\emph{Step 2:} Exponential decay estimate.\\

First we show that $\phi_{u} \in C^{0, \gamma}(\mathbb{R}^3)$, $\forall \gamma \in (0,1)$ and that there exists a constant $C_0>0$ such that
\begin{eqnarray}\label{va10}
\phi_{u} \geq \frac{C_0}{|x|},\ \ \mbox{ for all  }\ |x| \geq 1.
\end{eqnarray}
Since $\phi_{u} \in \mathcal{D}^{1,2}(\mathbb{R}^3)$ solves the equation $-\Delta \Phi=4 \pi |u|^2$ and $u \in L^6(\R^3)$ by elliptic regularity  $\phi_{u}\in W_{loc}^{2,3}(\mathbb{R}^3)$,
Thus by Sobolev's embedding, $\phi_{u} \in C^{0, \gamma}(\mathbb{R}^3)$, $\forall \gamma \in (0,1)$. In particular
$$C_0= \min_{\partial B_1}\phi_{u}(x) >0$$
where  $B_R=\{x\in \mathbb{R}^3: |x|\leq R \}$.
Indeed, if $\phi_{u}(x_0)=0$ at some point $x_0\in \mathbb{R}^3$ with $|x_0|=1$, then $u(x)=0$ a.e. in $\mathbb{R}^3$.

Now for an arbitrary $R_0>0$, let $w_1=\phi_{u}-\frac{C_0}{|x|}$. Then
$$\left\{\begin{matrix}
-\Delta w_1 =4 \pi u^2\geq 0, &\mbox{ in }& B_{R_0} \setminus B_1;\\
w_1 \geq 0, &\mbox{ on }& \partial B_1;\\
w_1 \geq -\frac{C_0}{R}, &\mbox{ on }& \partial B_{R_0}
\end{matrix}\right.$$
and the maximum principle yields that
$$ w_1 \geq -\frac{C_0}{R_0},\ \mbox{ in } B_{R_0} \setminus B_1. $$
Letting $R_0 \to \infty$, it follows that $w_1 \geq 0$ in $\mathbb{R}^3\setminus B_1$ and thus (\ref{va10}) holds.\\

Now we denote by $u^+ (u^-)$ the positive (negative) part of $u$, namely $u^+(x)=\max\{u(x),0\}$ and $u^-(x)=\max\{-u(x),0\}$.

By Kato's inequality, we know that $\Delta u^+\geq \chi[u\geq 0]\Delta u$, see \cite{BP}. Thus
\begin{eqnarray}\label{dy1.1}
-\Delta u^+ -\lambda u^+ + \phi_{u}\cdot u^+ \leq (u^+)^{p-1}\ \mbox{  in  }\ \mathbb{R}^3.
\end{eqnarray}
Let us show that there exist constants $\widetilde{C}>0$ and $R_1>0$ such that
\begin{eqnarray}\label{dy2}
u^+(x) \leq  \widetilde{C} \phi_{u}(x)\ \ \mbox{for } |x|>R_1.
\end{eqnarray}
To prove this, we consider $w_2=u^+ - \phi_{u}- \frac{d}{|x|},$ for a constant $d>0$. Then (\ref{dy1.1}) and $\lambda \leq 0$ imply that
$$-\Delta w_2\leq (u^+)^{p-1}-4 \pi u^2,\ \ \mbox{in } |x|\geq 1.$$

Since $\lim_{|x|\to \infty}u(x)\to 0$ and $p>3$, then
$(u^+)^{p-1}-4 \pi u^2 \leq 0 $ holds in $|x|\geq R_1$ for some $R_1 >0$ large enough. Thus for any $R \geq R_1$ and taking  $d>0$ large enough we have
\begin{eqnarray*}
\left\{\begin{matrix}
-\Delta w_2 \leq 0,  &\mbox{ in }&\  B_R\setminus B_{R_1}; \\
w_2 \leq 0, &\mbox{  on  }&\ \partial B_{R_1};\\
w_2 \leq \max_{\partial B_R} u^+-\frac{d}{R},\  &\mbox{  on  }&\  \partial B_{R}.
\end{matrix}\right.
\end{eqnarray*}
Then by the maximum principle, we have  $w_2 \leq \max_{\partial B_{R}}u^+-\frac{d}{R}$ in $B_R\setminus B_{R_1}$. Letting $R\to \infty$ we conclude that $w_2\leq 0$ in $\mathbb{R}^3 \setminus B_{R_1}$. This, together with (\ref{va10}), implies (\ref{dy2}).

>From (\ref{dy1.1}) we have for any $\sigma >0$ and since $\lambda \leq 0$,
\begin{eqnarray}\label{dy5}
-\Delta u^+ + \frac{\sigma}{|x|}u^+&\leq&  \frac{\sigma}{|x|}u^+ - \phi_{u}u^+ +\lambda u^+ + (u^+)^{p-1}  \nonumber \\
&\leq&\left (\frac{\sigma}{|x|}- \phi_{u} +(u^+)^{p-2} \right )u^+.
\end{eqnarray}
Using (\ref{va10}) and (\ref{dy2}), for $|x|\geq R_1>1$, by choosing $0<\sigma <C_0$, we have
\begin{eqnarray*}
\frac{\sigma}{|x|}- \phi_{u}  + (u^+)^{p-2} &\leq& \frac{\sigma}{C_0}\cdot \phi_{u}- \phi_{u} +(u^+)^{p-2} \\
&\leq& -(1-\frac{\sigma}{C_0})\widetilde{C}^{-1}u^+ + (u^+)^{p-2}\\
&=& \left(-(1-\frac{\sigma}{C_0})\widetilde{C}^{-1} + (u^+)^{p-3}  \right)\cdot u^+,
\end{eqnarray*}
where $(1-\frac{\sigma}{C_0})\widetilde{C}^{-1}>0$. Since $p\geq 3$ and $u(x)\to 0$ as $|x|\to \infty$, for $R_1>1$ sufficiently large, we obtain that $-(1-\frac{\sigma}{C_0})\widetilde{C}^{-1} + (u^+)^{p-3}\leq 0$ in $|x|\geq R_1$. Thus it follows from (\ref{dy5}) that
\begin{eqnarray}\label{dy6}
-\Delta u^+ + \frac{\sigma}{|x|}u^+ \leq 0,\quad \mbox{in } \mathbb{R}^3 \setminus B_{R_1}.
\end{eqnarray}
If we denote $\bar{C}_1=\max_{\partial B_{R_1}}u^+$, applying the maximum principle, we thus obtain
\begin{eqnarray}\label{dy7}
u^+ \leq \bar{C}_1\cdot \bar{w},\quad \mbox{in } \mathbb{R}^3 \setminus B_{R_1}
\end{eqnarray}
where $\bar{w}$ is the radial solution of
$$\left\{\begin{matrix}
-\Delta \bar{w}+\frac{\sigma}{|x|}=0,\ &\mbox{ if }& |x|>R_1;   \\
\bar{w}(x)=1,\ &\mbox{ if }& |x|=R_1;  \\
\bar{w}(x)\to 0,\ &\mbox{ if }& |x|\to \infty.
\end{matrix}\right.$$
Now $\bar{w}$  satisfies (cfr. \cite{AMR} Section 4),
\begin{eqnarray}\label{dy8}
\bar{w}(x) \leq \frac{C}{|x|^{3/4}}e^{-2C'\sqrt{|x|}},\ \ \forall\ |x|>R'
\end{eqnarray}
for some $C>0, C'>0$ and $R'>0$.\\

Finally we observe that if $u$ is a solution of (\ref{eq}), then $-u$ is also a solution. Thus since $u^-=(-u)^+$,  following the same arguments, we obtain that there exists a constant $\bar{C}_2>0$ such that
\begin{eqnarray}\label{dy9}
u^- \leq \bar{C}_2\cdot \bar{w},\quad \mbox{in } \mathbb{R}^3 \setminus B_{R_1}.
\end{eqnarray}

Hence $|u|=u^++u^-\leq (\bar{C}_1+\bar{C}_2)\bar{w},\ \mbox{ in } \mathbb{R}^3 \setminus B_{R_1}$ for $R_1>0$ sufficiently large.
At this point we see from (\ref{dy8})  that $u \in E$ satisfies the exponential decay (\ref{decay}). In particular  $u\in L^2(\R^3)$ and then also $u \in H^1(\R^3)$.
\end{proof}

\begin{lem}\label{constant}
There exists $c_{\infty}>0$ such that for all $c \geq c_{\infty}$ the function $c \to \gamma(c)$ is constant. Also if for a $c \geq c_{\infty}$ there exists a couple $(u_c, \lambda_c) \in H^1(\R^3) \times \R$ solution of \eqref{eq} with $||u_c||_2^2 = c$ and $F(u_c) = \gamma(c)$ then necessarily $\lambda_c =0.$
\end{lem}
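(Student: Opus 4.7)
The plan is to use the ground state of $F$ on $E$ furnished by \cite{IARU}, upgraded to $H^1(\R^3)$ via Theorem \ref{th3.1}, as a bridge between the free mountain pass level $m$ in (\ref{ruizlevel}) and the constrained level $\gamma(c)$. First I would invoke Theorem 1.1 of \cite{IARU} (which applies since $p \in (\frac{10}{3},6) \subset (3,6)$) to produce a real $\bar u \in E$ solving \eqref{eqfree} with $F(\bar u) = m$. Since $\bar u$ is a solution of \eqref{eq} with $\lambda = 0$, Theorem \ref{th3.1} gives $\bar u \in H^1(\R^3)$, so one can set
$$ c_\infty := \|\bar u\|_2^2 > 0. $$

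Next, since $\bar u \in H^1(\R^3)$ solves \eqref{eqfree}, Lemma \ref{pohoz} yields $Q(\bar u) = 0$, that is $\bar u \in V(c_\infty)$. Combining this with Lemma \ref{mpestimate} gives
$$ \gamma(c_\infty) = \inf_{u \in V(c_\infty)} F(u) \leq F(\bar u) = m, $$
whereas Lemma \ref{boundbelow} gives $\gamma(c_\infty) \geq m$; hence $\gamma(c_\infty) = m$. For any $c \geq c_\infty$, monotonicity (Lemma \ref{prop1}) forces $\gamma(c) \leq \gamma(c_\infty) = m$, and again Lemma \ref{boundbelow} forces $\gamma(c) \geq m$. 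Therefore $\gamma(c) \equiv m$ on $[c_\infty, \infty)$, proving the first statement.

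For the second statement, assume $(u_c,\lambda_c) \in H^1(\R^3) \times \R$ solves \eqref{eq} with $\|u_c\|_2^2 = c \geq c_\infty$ and $F(u_c) = \gamma(c) = m$. Lemma \ref{pohoz} (with general $\lambda$) gives $Q(u_c) = 0$, so $u_c \in V(c)$, and since $F(u_c) = \gamma(c) = \inf_{V(c)} F$ by Lemma \ref{mpestimate}, we have $u_c \in \mathcal{M}_c$. Theorem \ref{naturalconstraint} then forces $\lambda_c \leq 0$. If $\lambda_c < 0$, Lemma \ref{strictdecrease} would imply that $c' \mapsto \gamma(c')$ is strictly decreasing in a neighborhood of $c$; for $c' > c$ close to $c$ this would give $\gamma(c') < m$, contradicting Lemma \ref{boundbelow}. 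Consequently $\lambda_c = 0$.

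The main obstacle is the very existence of the bridging threshold $c_\infty$, which rests entirely on Theorem \ref{th3.1}: \emph{a priori} the ground state of \cite{IARU} lies only in $E$, and without the exponential decay estimate (hence the $L^2$ integrability) one could neither define $c_\infty$ nor compare $m$ with $\gamma(c)$ through $V(c)$. Everything after that reduction is a short bookkeeping argument that juxtaposes Lemmas \ref{boundbelow}, \ref{prop1}, \ref{mpestimate} and \ref{strictdecrease}.
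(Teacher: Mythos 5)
Your proposal is correct and follows essentially the same route as the paper: take a real ground state of the free functional from \cite{IARU}, use Theorem \ref{th3.1} to place it in $H^1(\R^3)$ and define $c_\infty$ as its squared $L^2$-norm, squeeze $\gamma(c)$ between $m$ (Lemma \ref{boundbelow}) and $\gamma(c_\infty)=m$ via monotonicity, and then rule out $\lambda_c\neq 0$ through Lemma \ref{strictdecrease}. The only cosmetic difference is that you route the exclusion of $\lambda_c>0$ through Theorem \ref{naturalconstraint}, whereas the paper applies Lemma \ref{strictdecrease} directly to both signs; the substance is identical.
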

\begin{proof}
>From Lemma \ref{description} and \cite{IARU} we know that there exists grounds states of the free functional $F(u)$ which are real. From  Theorem \ref{th3.1} we know that any ground state belongs to $H^1(\R^3)$. Let $u_0 \in H^1(\R^3)$ be one of these ground states and set $c_0 = ||u_0||^2_2$. Then, by Lemma \ref{pohoz}, $u_0 \in V(c_0)$ and using Lemma \ref{boundbelow} we get
$$F(u_0) \geq \gamma(c_0) \geq m = F(u_0).$$
Thus necessarily $\gamma(c_0) = m$. Now since $c \to \gamma(c)$ is non increasing, still by Lemma \ref{boundbelow}, we deduce that $\gamma(c) = \gamma(c_0)$ for all $c \geq c_0$. Now let $(u_c, \lambda_c) \in H^1(\R^3) \times \R$ be a solution of (\ref{eq}) with $||u_c||_2^2 = c$ and $F(u_c) = \gamma(c)$. It is not possible to have $\lambda_c >0$ otherwise, by Lemma \ref{strictdecrease}, the function $c\to \gamma(c)$ would be strictly increasing around $c>0$ contradicting Lemma \ref{prop1}.
Also $\lambda_c <0$ is not possible since by Lemma \ref{strictdecrease} it would imply that $c \to \gamma(c)$ is strictly decreasing around $c>0$ in  contradiction with the fact that it is constant. Then necessarily $\lambda_c=0$.
\end{proof}
\begin{remark}\label{versnonexits}
We see, from Theorem \ref{naturalconstraint} and Lemma \ref{constant}, that if $\gamma(c)$ is reached, say by a $u_c \in H^1(\R^3)$ with $c >0$ large enough, then $u_c$ is a ground state of $F(u)$ defined on $E$. It is unlikely that ground states exist for an infinity of value of $c>0$. So we conjecture that there exists a $c_{lim} >0$ such that for  $c \geq c_{lim}$ there are no critical points for $F(u)$ constrained to $S(c)$ at the ground state level $\gamma(c)$.
\end{remark}

\begin{proof}[Proof of Theorem \ref{maingamma}]
Obviously, points (i), (ii), (iv), (v) of Theorem \ref{maingamma} follow directly from Lemmas \ref{prop1}, \ref{continuity}, \ref{limzero}, \ref{liminf}, \ref{constant} and Lemmas \ref{pohoz}, \ref{strictdecrease} conclude point (iii).
\end{proof}


\section{Global existence and strong instability} \label{Section8}

We introduce the following result about the locally well-posedness of the Cauchy problem to the equation \eqref{main} (see Cazenave \cite{TC}, \emph{Theorem 4.4.6} and \emph{Propostion 6.5.1} or Kikuchi's Doctoral thesis \cite{HK}, \emph{Chapter 3}).\\

\begin{prop}\label{prop2.1}
Let $p \in (2,6)$, for any $u_0 \in H^1(\mathbb{R}^{3},\mathbb{C})$, there exists $T=T(\left \| u_0\right \|_{H^1})>0$ and a unique solution $u(t) \in C([0,T), H^1(\mathbb{R}^{3},\mathbb{C}))$ of the equation \eqref{main} with initial datum $u(0)=u_0$ satisfying
\begin{eqnarray*}
F(u(t))=F(u_0), \,  \left \| u(t) \right \|_2=\left \| u_0 \right \|_2 \mbox{ for any }   t \in [0,T).
\end{eqnarray*}
In addition, if $u_0 \in H^1(\mathbb{R}^{3},\mathbb{C})$ satisfies $| x| u_0 \in L^2(\mathbb{R}^{3},\mathbb{C})$, then the virial identity
$$\frac{\mathrm{d}^2}{\mathrm{d} t^2} \left \|xu(t)\right\|_2^2=8Q(u),$$
holds for any $ t\in [0,T)$.
\end{prop}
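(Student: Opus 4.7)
The plan is to follow the standard Strichartz/contraction scheme for nonlinear Schr\"odinger equations, treating the Hartree-type nonlocal term and the power nonlinearity separately. Recast \eqref{main} in Duhamel form
$$u(t) = e^{it\Delta} u_0 + i \int_0^t e^{i(t-s)\Delta}\Bigl[ -(|x|^{-1} * |u(s)|^2) u(s) + |u(s)|^{p-2} u(s) \Bigr]\, ds,$$
and set up a fixed-point argument in a ball of the Strichartz space $X_T := C([0,T]; H^1) \cap L^q([0,T]; W^{1,r})$ for a well-chosen $H^1$-admissible pair $(q,r)$ (with $r$ slightly above $2$, so $q$ is large). The local existence then reduces to verifying a Lipschitz estimate of the nonlinearity on bounded sets of $X_T$ with a contraction factor that is $o(1)$ as $T \to 0^+$.

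For the local term $|u|^{p-2} u$, since $p \in (2,6)$ the problem is $H^1$-subcritical in dimension $3$, and the required Lipschitz bounds are the standard ones of Kato type, see Cazenave \cite{TC}. For the nonlocal term $(|x|^{-1} \ast |u|^2) u$, the key estimates come from Hardy--Littlewood--Sobolev: the map $v \mapsto |x|^{-1} \ast |v|^2$ sends $H^1(\R^3)$ continuously into $L^\infty \cap W^{1,6}$, so $v \mapsto (|x|^{-1} \ast |v|^2) v$ is locally Lipschitz from $H^1$ to $H^1$. These two pieces combine to make the Duhamel map a contraction on $X_T$ for $T = T(\|u_0\|_{H^1})$ small, yielding existence and uniqueness in $C([0,T); H^1)$. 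Blow-up alternative follows in the standard way.

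Conservation of the $L^2$-norm and of the energy $F$ are formally obtained by testing the equation against $\bar u$ and against $\partial_t \bar u$ respectively; since $u$ has only $H^1$ regularity these manipulations are not immediately licit, so I would justify them by a regularization argument: approximate $u_0$ by a sequence $u_0^n \in H^2$, use the higher regularity of the approximating solutions (propagated by the equation in $H^2$) to obtain the conservation laws rigorously for each $n$, and then pass to the limit using continuous dependence on initial data in $H^1$, which is a by-product of the contraction argument.

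For the virial identity, assume additionally $|x| u_0 \in L^2$. One shows first that this weighted integrability is propagated by the flow, i.e.\ $|x| u(t) \in L^2$ for $t \in [0,T)$, by differentiating $\|xu(t)\|_2^2$ and closing a Gronwall estimate. Then a direct computation using the equation \eqref{main} gives
$$\frac{d}{dt}\|xu(t)\|_2^2 = 4\,\mathrm{Im}\!\int_{\R^3} \bar u\, x\cdot\nabla u\, dx,$$
and differentiating once more, using integration by parts together with the identity
$$\int x\cdot\nabla\bigl((|x|^{-1}\ast|u|^2)|u|^2\bigr)\, dx = -\tfrac{1}{2}B(u), \qquad \int x\cdot\nabla(|u|^p)\, dx = -3\int|u|^p\, dx,$$
produces $\frac{d^2}{dt^2}\|xu(t)\|_2^2 = 8\,Q(u)$. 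The main obstacle is the rigorous justification of these formal differentiations in $t$ at merely $H^1$ regularity; as above I would carry them out first for $H^2$ data with $|x|^2 u_0 \in L^2$, where everything is classical, and then approximate, using the propagation of the weighted norm to pass to the limit.
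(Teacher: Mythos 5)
The paper does not prove this proposition at all: it is quoted verbatim from Cazenave \cite{TC} (Theorem 4.4.6 and Proposition 6.5.1) and from Kikuchi's thesis \cite{HK}, and the Duhamel/contraction scheme with Strichartz estimates, the regularization argument for the conservation laws, and the approximation by data with $|x|u_0\in L^2(\R^3)$ and higher regularity for the virial identity are exactly what those references carry out. So your outline reconstructs the cited proof rather than offering a different route, and it is sound in structure; the one genuine slip is the intermediate identity for the Hartree term. As written,
$$\int_{\R^3} x\cdot\nabla\bigl((|x|^{-1}\ast|u|^2)|u|^2\bigr)\,dx=-3\,B(u)$$
by the divergence theorem, not $-\tfrac12 B(u)$; the quantity that actually enters the Morawetz-type computation is
$$\int_{\R^3}(|x|^{-1}\ast|u|^2)\,x\cdot\nabla|u|^2\,dx=-3B(u)+\tfrac12 B(u)=-\tfrac52 B(u),$$
obtained by symmetrizing in $x$ and $y$, and it is the symmetrized correction $\tfrac12 B(u)$ that produces the coefficient $2B(u)=8\cdot\tfrac14 B(u)$ in $8Q(u)$. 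With that identity corrected, the final formula $\frac{d^2}{dt^2}\|xu(t)\|_2^2=8Q(u)$ comes out as stated.
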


\begin{proof}[Proof of Theorem \ref{global}]
Let $u(x,t)$ be the solution of \eqref{main} with $u(x,0)=u_0$ and $T_{max} \in (0, \infty]$ its maximal time of existence. Then classically we have either
$$T_{max}=+\infty$$
or
\begin{equation}\label{blowup}
T_{max} < + \infty \quad \mbox{ and } \lim_{t \rightarrow T_{max}}||\nabla u(x,t)||_2^2=\infty.
\end{equation}
Since
$$
F(u(x,t))-\frac{2}{3(p-2)}Q(u(x,t))=\frac{3p-10}{6(p-2)}A(u(x,t))+\frac{3p-8}{12(p-2)}B(u(x,t))
$$
and  $F(u(x,t))=F(u_0)$ for all $t<T_{max}$, if \eqref{blowup} happens then,
we get
$$\lim_{t \rightarrow T_{max}}Q(u(x,t))=-\infty.$$
By continuity it exists $t_0 \in (0, T_{max})$ such that $Q(u(x, t_0))=0$ with
$F(u(x,t_0))=F(u_0)<\gamma(c)$. This contradicts the definition $\gamma(c) = \inf_{u\in V(c)}F(u)$.
\end{proof}

\begin{remark}\label{rem1.3'}
For $p \in (\frac{10}{3},6)$ and any $c>0$ the set $\mathcal{O}$ is not empty. Indeed for an arbitrary  but fixed  $u\in S(c)$ $u^{t}(x)=t^{\frac{3}{2}}u(t x)$. Then $u^{t} \in S(c)$ for all $ t>0$ and
$$Q(u^{t})= t^2A(u)+\frac{t}{4}B(u)-\frac{3(p-2)}{2p}t^{\frac{3(p-2)}{2}}C(u),$$
$$F(u^{t})= \frac{t^2}{2}A(u)+\frac{t}{4}B(u)-\frac{t^{\frac{3(p-2)}{2}}}{p}C(u).$$
We observe that $ F(u^t) \to 0$ as $t \to 0$. Also, since $\frac{3(p-2)}{2} >1$, we have $Q(u^t) >0$ when $t>0$ is sufficiently small. This proves that $\mathcal{O}$ is not empty.
\end{remark}



\begin{proof}[Proof of the Theorem \ref{th2.1}] For any $c>0$, let $u_c \in \mathcal{M}_c $
and define the set
\begin{eqnarray*}
\Theta= \left \{v \in H^1(\mathbb{R}^3)\setminus \{0\}:\  F(v)<F(u_c),\ \left \| v \right \|_2^2=\left \| u_c \right \|_2^2, \ Q(v)<0   \right \}.
\end{eqnarray*}
The set $\Theta$ contains elements arbitrary close to $u_c$ in $H^1(\R^3)$. Indeed, letting $v_0(x)=u_c^{\lambda}=\lambda^{\frac{3}{2}}u_c(\lambda x)$, with $\lambda <1$, we see from Lemma \ref{growth} that $v_0 \in \Theta$ and that $v_0 \to u_c$ in $H^1(\mathbb{R}^3)$ as $\lambda \to 1$. \medskip

Let $v(t)$ be the maximal solution of \eqref{main} with initial datum $v(0)=v_0$ and  $T \in (0, \infty]$ the maximal time of existence. Let us show that $v(t) \in \Theta$ for all $ t \in [0,T)$. From the conservation laws
$$\left \| v(t) \right \|_2^2 = \left \| v_0 \right \|_2^2 =\left \| u_c \right \|_2^2,$$ and  $$F(v(t))=F(v_0)<F(u_c).$$ Thus it is enough to verify $Q(v(t))<0$. But $Q(v(t)) \neq 0$  for any $t \in (0, T)$. Otherwise, by the definition of $\gamma(c)$, we would get for a $t_0 \in (0,T)$ that $F(v(t_0)) \geq F(u_c)$ in contradiction with $F(v(t)) < F(u_c)$. Now by continuity of $Q$ we get that $Q(v(t))<0$ and thus that $v(t) \in \Theta$ for all $ t \in [0,T)$.
Now we claim that there exists $\delta >0$, such that
\begin{eqnarray}\label{le2.4}
Q(v(t))\leq - \delta , \ \forall t \in [0,T).
\end{eqnarray}
Let $t \in [0,T)$ be arbitrary but fixed and set $v = v(t)$. Since $Q(v) <0$ we know by Lemma \ref{growth} that $\lambda^{\star}(v)<1$ and that $\lambda \longmapsto F(v^{\lambda})$ is concave on $[\lambda ^{\star}, 1)$.
Hence
\begin{eqnarray*}
F(v^{\lambda^{\star}})-F(v) &\leq& (\lambda^{\star}-1) \frac{\partial}{\partial \lambda}F(v^{\lambda})\mid _{\lambda =1} \\
&=& (\lambda^{\star}-1) Q(v).
\end{eqnarray*}
Thus, since $Q(v(t))<0$, we have
$$F(v)-F(v^{\lambda^{\star}})\geq (1- \lambda^{\star})Q(v)\geq Q(v).$$
It follows from $F(v)=F(v_0)$ and $v^{\lambda^{\star}} \in V(c)$ that
$$Q(v) \leq F(v)-F(v^{\lambda^{\star}})\leq F(v_0)-F(u_c).$$
Then letting  $\delta=F(u_0)-F(v_0)>0$  the claim is established. To conclude the proof of the theorem we use Proposition \ref{prop2.1}. Since $v_0(x)=u_c^{\lambda}$ we have that
$$\int_{\mathbb{R}^3}| x|^2 |v_0|^2dx=\int_{\mathbb{R}^3}| x|^2 |u_c^{\lambda}|^2dx=\lambda^2 \int_{\mathbb{R}^3}| y|^2 |u_c(y)|^2dy.$$
Thus, from Lemma \ref{minimizer} and Theorem \ref{th3.1}, we obtain that
\begin{eqnarray}\label{2.3}
\int_{\mathbb{R}^3}| x|^2 |v_0|^2dx<\infty.
\end{eqnarray}
Applying Proposition \ref{prop2.1} it follows that
$$\frac{\mathrm{d}^2}{\mathrm{d} t^2} \left \|xv(t)\right\|_2^2=8Q(v).$$
Now by (\ref{le2.4}) we deduce that $v(t)$ must blow-up in finite time, namely that (\ref{blowup}) hold. Recording that $v_0$ has been taken arbitrarily close to $u_c$, this ends the proof of the theorem.
\end{proof}

\begin{proof}[Proof of Theorem \ref{cor1.1}] For $p\in (\frac{10}{3}, 6)$,  let $u_0$ be a ground state of equation (\ref{eqfree}). From Theorem \ref{th3.1} we know that $u_0\in H^{1}(\R^3)$, thus we can set
$$c_0:=\|u_0\|_2^2.$$
>From Lemma \ref{pohoz}, we have $Q(u_0)=0$. Thus $u_0\in V(c_0)$ and it follows from (\ref{mini}) and Lemma \ref{boundbelow} that
$$F(u_0)\geq \gamma(c_0)\geq m=F(u_0).$$
Hence $F(u_0)=\inf_{u\in V(c_0)}F(u)$, which means that $u_0$ minimizes $F(u)$ on $V(c_0)$. Thus applying Theorem \ref{th2.1}, we end the proof.
\end{proof}

\section{Comparison with the nonlinear Schr\"odinger case} \label{Section7}

In \cite{LJ} the existence of critical points of
\begin{eqnarray}
\tilde F(u)=\frac{1}{2}\left \|\triangledown  u \right \|_2^2-\frac{1}{p}\| u \|_p^p,\quad \ u \in H^1(\mathbb{R}^N).
\end{eqnarray}
constrained to $S(c)$ was considered under the condition:
$$ (C) : \frac{2N+4}{N}< p <\frac{2N}{N-2}, \mbox{ if }  N\geq 3 \mbox{ and } \frac{2N+4}{N}< p \mbox{ if } N=1,2.$$
In our notation it is proved in \cite{LJ} that $\tilde F(u)$ has a mountain pass geometry on $S(c)$ in the sense that
$$\tilde \gamma(c)=\inf_{g \in \Gamma_c } \max_{t \in [0,1]}\tilde F(g(t)) > \max\{\tilde F(g(0)), \tilde F (g(1))\}$$
where
$$\tilde \Gamma_c =\{g \in C([0,1],S(c)), \ g(0) \in A_{K_c}, \tilde F (g(1))<0\},$$
and $A_{K_c}= \{u \in S(c):\ \left \| \triangledown u \right \|_2^2 \leq K_c\}$. Also we have
\begin{lem}(\cite{LJ} Theorem 2)\label{lemma3}
For $N\geq 1$ and any $c>0$, under the condition $(C)$, the functional $\tilde F(u)$ admits a critical point $u_c$ at the level $\tilde \gamma(c)$ with $\|u_c\|_2^2=c$ and there exists $\lambda_c<0$ such that $(\lambda_c, u_c)$ solves weakly the following Euler-Lagrange equation associated to the functional $\tilde F(u)$ :
\begin{eqnarray}\label{NLS}
-\Delta u - \lambda u= |u|^{p-2}u.
\end{eqnarray}
\end{lem}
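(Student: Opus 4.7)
The approach follows the four-step program developed in Sections \ref{Section2}--\ref{Section4} for the Schr\"odinger--Poisson functional, adapted to the simpler NLS setting without the nonlocal term. First, I would establish the mountain pass geometry on $S(c)$. Using the Cazenave rescaling $u^t(x)=t^{N/2}u(tx)$, which preserves $\|u\|_2$, one has
\begin{equation*}
\tilde F(u^t)=\frac{t^2}{2}\|\nabla u\|_2^2-\frac{t^{N(p-2)/2}}{p}\|u\|_p^p.
\end{equation*}
Under condition $(C)$ we have $N(p-2)/2>2$, so $\tilde F(u^t)\to-\infty$ as $t\to\infty$, while the Gagliardo--Nirenberg inequality forces $\tilde F$ to be strictly positive and bounded away from zero on a small $\|\nabla u\|_2$-ball in $S(c)$; this yields a strictly positive barrier in the spirit of Theorem \ref{thm1.2} and Lemma \ref{lm1.1}.

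Second, I would introduce the Pohozaev-type functional
\begin{equation*}
\tilde Q(u)=\|\nabla u\|_2^2-\frac{N(p-2)}{2p}\|u\|_p^p,
\end{equation*}
and its constraint set $\tilde V(c)=\{u\in S(c):\tilde Q(u)=0\}$. The direct analogue of Lemma \ref{growth} shows that every $u\in S(c)$ has a unique $t^\star(u)>0$ with $u^{t^\star}\in\tilde V(c)$ realizing $\max_{t>0}\tilde F(u^t)$, whence the characterization $\tilde\gamma(c)=\inf_{\tilde V(c)}\tilde F$. Following the deformation construction of Section \ref{Section3}, I would then produce a Palais--Smale sequence $\{u_n\}\subset S(c)$ at the level $\tilde\gamma(c)$ satisfying additionally $\tilde Q(u_n)=o(1)$. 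The identity
\begin{equation*}
\tilde F(u_n)-\frac{2}{N(p-2)}\tilde Q(u_n)=\frac{N(p-2)-4}{2N(p-2)}\|\nabla u_n\|_2^2
\end{equation*}
immediately yields boundedness of $\{u_n\}$ in $H^1(\R^N)$; since $\tilde\gamma(c)>0$ and $\tilde Q(u_n)=o(1)$, the sequence cannot be vanishing, so up to translation $u_n\rightharpoonup u_c\neq 0$.

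For the compactness step, the NLS case is substantially simpler than the Schr\"odinger--Poisson one thanks to pure scale invariance. Using the two-parameter rescaling $u\mapsto\alpha u(\beta\cdot)$ with $\alpha^2\beta^{-N}=c$, a direct computation gives the explicit law $\tilde\gamma(c)=c^{\sigma}\tilde\gamma(1)$ with
\begin{equation*}
\sigma=\frac{(N-2)(p-2)-4}{N(p-2)-4}<0
\end{equation*}
under $(C)$ (interpreted as $-2/(p-4)$ for $N=2$ and $-(p+2)/(p-6)$ for $N=1$), so $c\mapsto\tilde\gamma(c)$ is strictly decreasing on $(0,\infty)$. This strict monotonicity makes the subadditivity required in a Brezis--Lieb argument automatic, and the reasoning of Lemma \ref{main-abs1} yields $u_n\to u_c$ strongly in $H^1(\R^N)$, hence $u_c\in S(c)$ with $\tilde F(u_c)=\tilde\gamma(c)$. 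The associated Lagrange multiplier $\lambda_c$ is then extracted as in Proposition \ref{prop1.3}, and its sign is handled without extra work: combining the equation with its Pohozaev identity gives
\begin{equation*}
\bigl((N-2)p-2N\bigr)\|\nabla u_c\|_2^2=N(p-2)\,\lambda_c\,\|u_c\|_2^2,
\end{equation*}
and $(N-2)p<2N$ (vacuous for $N=1,2$) together with $\|\nabla u_c\|_2^2>0$ forces $\lambda_c<0$.

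The main obstacle one expects in such a problem is the loss of compactness intrinsic to normalized critical points on $\R^N$, namely the need for a strict subadditivity (or strict monotonicity) of $c\mapsto\tilde\gamma(c)$. Full scale invariance of $\tilde F$ on $S(c)$ circumvents the delicate truncation-and-translation argument developed in Section \ref{Section5} for the nonlocal functional $F$; once the closed-form scaling $\tilde\gamma(c)=c^\sigma\tilde\gamma(1)$ is available, all remaining steps are essentially a streamlined version of what has already been carried out in this paper.
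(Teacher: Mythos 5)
Your proposal is correct, but note that the paper does not actually prove this statement: Lemma \ref{lemma3} is imported verbatim as Theorem 2 of \cite{LJ}, so there is no internal proof to compare against. What you have written is a self-contained reconstruction that transplants the machinery of Sections \ref{Section2}--\ref{Section4} (mountain pass geometry on $S(c)$, the Pohozaev constraint set $\tilde V(c)$, a Palais--Smale sequence localized so that $\tilde Q(u_n)=o(1)$, Brezis--Lieb splitting) to the local functional $\tilde F$, and all the individual steps check out: the identity $\tilde F-\frac{2}{N(p-2)}\tilde Q=\frac{N(p-2)-4}{2N(p-2)}\|\nabla\cdot\|_2^2$ gives boundedness and non-vanishing, and your multiplier computation $((N-2)p-2N)\|\nabla u_c\|_2^2=N(p-2)\lambda_c\|u_c\|_2^2$ is equivalent to the sign argument in the final Remark of Section \ref{Section7}. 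The genuinely distinctive ingredient is the closed-form scaling law $\tilde\gamma(c)=c^{\sigma}\tilde\gamma(1)$ with $\sigma=\frac{(N-2)p-2N}{N(p-2)-4}<0$, which is correct (your exponent equals $\frac{(N-2)(p-2)-4}{N(p-2)-4}$) and buys you strict monotonicity of $c\mapsto\tilde\gamma(c)$ \emph{before} any attainment result; this is exactly what is unavailable for $F$ because the nonlocal term $B(u)$ scales incompatibly with $A$ and $C$, which is why the paper resorts to the truncation-and-gluing argument of Section \ref{Section5} and why its own Lemma \ref{prop2} deduces strict monotonicity of $\tilde\gamma$ \emph{from} Lemma \ref{lemma3} rather than the other way around --- your route avoids that circularity. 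For completeness, the original proof in \cite{LJ} obtains the bounded Palais--Smale sequence by a different device (a mountain pass for the augmented functional $(s,u)\mapsto\tilde F(e^{Ns/2}u(e^{s}\cdot))$ on $\R\times S(c)$) rather than the deformation-near-$\tilde V(c)$ localization of Section \ref{Section3}, but both mechanisms deliver the same sequence with $\tilde Q(u_n)\to 0$, so your sketch is a legitimate alternative proof of the cited result.
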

\begin{lem}(\cite{LJ} Corollary 3.1 and Theorem 3.2)\label{lemma4}
For $N\geq1$, as
$c \to 0$
$$\left \{ \begin{matrix}
\|\nabla u_c\|_2^2 \rightarrow \infty, \\
\lambda_c \rightarrow -\infty .
\end{matrix}\right.$$
and as $c\to +\infty$,
$$\left \{ \begin{matrix}
\|\nabla u_c\|_2^2 \rightarrow 0, \\
\lambda_c \rightarrow 0 .
\end{matrix}\right.$$
\end{lem}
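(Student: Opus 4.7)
The plan is to derive two universal identities relating $\|\nabla u_c\|_2^2$, $\|u_c\|_p^p$, $\lambda_c$ and $\tilde\gamma(c)$, and then to determine the asymptotics of $\tilde\gamma(c)$ in the two regimes $c\to 0^+$ and $c\to\infty$; the desired limits then follow by pure algebra.

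First, I would note that any $u_c\in H^1(\R^N)$ solving \eqref{NLS} satisfies the Pohozaev identity for NLS (the analogue of Lemma \ref{pohoz} here), namely
$$\tilde Q(u_c):=\|\nabla u_c\|_2^2 - \frac{N(p-2)}{2p}\|u_c\|_p^p = 0.$$
Combined with the relation $\|\nabla u_c\|_2^2-\|u_c\|_p^p=\lambda_c c$ obtained by testing \eqref{NLS} against $u_c$, and recalling $\tilde\gamma(c)=\tilde F(u_c)$, eliminating $\|u_c\|_p^p$ yields
$$\|\nabla u_c\|_2^2 = \frac{N(p-2)}{2p-N(p-2)}\,|\lambda_c|\,c, \qquad \tilde\gamma(c) = \frac{N(p-2)-4}{2N(p-2)}\,\|\nabla u_c\|_2^2.$$
Under condition $(C)$ we have $N(p-2)>4$ and $2p-N(p-2)>0$ (the latter being Sobolev-subcriticality), so both constants are strictly positive, and the problem reduces to controlling $\tilde\gamma(c)$ in the two regimes.

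For $c\to 0^+$, I would reproduce the argument of Lemma \ref{limzero}: starting from $\tilde Q(u_c)=0$ and applying the Gagliardo--Nirenberg inequality $\|u\|_p^p\le C\|\nabla u\|_2^{N(p-2)/2}\|u\|_2^{p-N(p-2)/2}$ yields
$$\|\nabla u_c\|_2^{\frac{N(p-2)}{2}-2}\ge K_1\,c^{-\frac{2p-N(p-2)}{4}}.$$
Both exponents being positive, $\|\nabla u_c\|_2^2\to+\infty$ as $c\to 0^+$. For $c\to\infty$, I would fix any $v\in S(1)$, set $v_{c,\sigma}(x):=c^{1/2}\sigma^{N/2}v(\sigma x)\in S(c)$, and maximize $\tilde F(v_{c,\sigma})$ over $\sigma>0$: an explicit computation yields a unique maximizer $\sigma^*(c)\propto c^{-(p-2)/(N(p-2)-4)}$ together with the upper bound
$$\tilde\gamma(c)\le \tilde F(v_{c,\sigma^*(c)}) = K_v\,c^{\frac{(N-2)(p-2)-4}{N(p-2)-4}}.$$
Under $(C)$ one checks that $(N-2)(p-2)<4$ (trivially for $N=1,2$, and equivalently to $p<2N/(N-2)$ for $N\ge 3$), so the exponent of $c$ is strictly negative and $\tilde\gamma(c)\to 0$.

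Feeding these asymptotics into the identities of the second paragraph, I conclude: as $c\to 0^+$, $\|\nabla u_c\|_2^2\to\infty$ while $c\to 0$ gives $|\lambda_c|=K\,\|\nabla u_c\|_2^2/c\to\infty$, hence $\lambda_c\to-\infty$ (the sign being provided by Lemma \ref{lemma3}); as $c\to\infty$, $\|\nabla u_c\|_2^2\to 0$ and $c\to\infty$ yield $|\lambda_c|\to 0$. The step I expect to be most delicate is the sign analysis of the exponent $((N-2)(p-2)-4)/(N(p-2)-4)$ uniformly across the cases allowed by $(C)$, since it requires separating $N=1,2$ from $N\ge 3$; everything else is a direct consequence of the Pohozaev identity, the Gagliardo--Nirenberg inequality, and the explicit scaling family.
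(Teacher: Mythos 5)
Your argument is correct, but note that the paper itself offers no proof of this lemma: it is imported verbatim from \cite{LJ} (Corollary 3.1 and Theorem 3.2), so there is nothing internal to compare against line by line. What you have produced is a valid self-contained derivation, and it is consistent with the fragments the paper does compute elsewhere: the identity $\tilde\gamma(c)=\frac{N(p-2)-4}{2N(p-2)}\|\nabla u_c\|_2^2$ and the explicit formula for $\max_{t>0}\tilde F(u^t)$ both appear in the proof of Lemma \ref{prop2}, where they are combined \emph{with} Lemma \ref{lemma4} to get \eqref{limites}; your route runs the logic in the opposite direction, obtaining the asymptotics of $\tilde\gamma(c)$ directly (Gagliardo--Nirenberg for $c\to 0^+$, the scaling family $v_{c,\sigma}$ for $c\to\infty$) and then reading off $\|\nabla u_c\|_2^2$ and $\lambda_c$ from the two algebraic identities. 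This buys a proof that does not lean on \cite{LJ} for the asymptotics and avoids any circularity with Lemma \ref{prop2}. All the sign checks go through: $N(p-2)>4$ and $2p-N(p-2)>0$ under $(C)$ (the latter automatic for $N=1,2$ since $2p-N(p-2)=p(2-N)+2N>0$), and the exponent $\frac{(N-2)(p-2)-4}{N(p-2)-4}=\frac{N(p-2)-2p}{N(p-2)-4}$ is indeed negative in all admissible cases. The only point worth making explicit in a written version is the justification of $\tilde Q(u_c)=0$ for a general $H^1$ solution of \eqref{NLS} (Pohozaev plus the $L^2$-pairing), which you correctly flag as the analogue of Lemma \ref{pohoz}.
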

Using the above two results we now prove
\begin{lem}\label{prop2}
For $N\geq 1$, under the condition $(C)$, the function $c\longmapsto \tilde \gamma(c)$ is strictly decreasing. In addition, we have
\begin{eqnarray}\label{limites}
\left \{ \begin{matrix}
\tilde \gamma(c) \rightarrow +\infty,  &as& c \to 0,\\
\tilde \gamma(c) \rightarrow 0 ,    &as& c \to \infty.
\end{matrix}\right.
\end{eqnarray}
\end{lem}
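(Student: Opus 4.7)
The plan is to combine the Pohozaev identity for \eqref{NLS} with the simpler two-term scaling structure of $\tilde F$ to obtain every statement by explicit computation, bypassing both the truncation argument of Lemma \ref{prop1} and the implicit function step of Lemma \ref{strictdecrease}.

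First I would establish the limits in \eqref{limites}. The critical point $u_c$ given by Lemma \ref{lemma3} satisfies the Pohozaev identity $\|\nabla u_c\|_2^2 = \frac{N(p-2)}{2p}\|u_c\|_p^p$, obtained by multiplying \eqref{NLS} by $u_c$ and by $x\cdot\nabla u_c$, integrating and combining. Using this to eliminate $\|u_c\|_p^p$ in $\tilde F(u_c)$ gives
$$\tilde{\gamma}(c) = \tilde F(u_c) = \frac{N(p-2) - 4}{2N(p-2)}\, \|\nabla u_c\|_2^2,$$
with the coefficient strictly positive under condition $(C)$. The two limits then follow at once from the behaviour of $\|\nabla u_c\|_2^2$ recorded in Lemma \ref{lemma4}.

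For strict monotonicity, fix $c_1 < c_2$, set $\mu = \sqrt{c_2/c_1} > 1$, and let $v = \mu\, u_{c_1} \in S(c_2)$. Using the Cazenave scaling $w^t(x) = t^{N/2} w(tx)$, which preserves the $L^2$-norm, and noting that $v^t = \mu\, u_{c_1}^t$, I compute
$$\tilde F(v^t) = \frac{\mu^2 t^2}{2}\|\nabla u_{c_1}\|_2^2 - \frac{\mu^p t^q}{p}\|u_{c_1}\|_p^p, \qquad q := \frac{N(p-2)}{2}.$$
Under $(C)$ one has $q \in (2,p)$, so $t \mapsto \tilde F(v^t)$ admits a unique positive maximum; setting the derivative to zero and simplifying yields
$$\max_{t > 0} \tilde F(v^t) = \mu^{2(q-p)/(q-2)}\, \max_{t > 0} \tilde F(u_{c_1}^t),$$
and the factor $\mu^{2(q-p)/(q-2)}$ is strictly less than $1$ since $q - p < 0$, $q - 2 > 0$, and $\mu > 1$.

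To conclude, observe that the Pohozaev relation for $u_{c_1}$ is exactly the condition under which $t \mapsto \tilde F(u_{c_1}^t)$ attains its maximum at $t = 1$, so $\max_{t > 0} \tilde F(u_{c_1}^t) = \tilde F(u_{c_1}) = \tilde{\gamma}(c_1)$ (analog of Lemma \ref{growth}(6) for $\tilde F$). On the other hand, the path $g(s) = v^{(1-s)\lambda_1 + s\lambda_2}$ with $\lambda_1 > 0$ small and $\lambda_2$ large lies in $\tilde{\Gamma}_{c_2}$ and satisfies $\max_{s \in [0,1]} \tilde F(g(s)) = \max_{t > 0} \tilde F(v^t)$, so $\tilde{\gamma}(c_2) \leq \max_{t > 0} \tilde F(v^t)$. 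Chaining the two bounds gives
$$\tilde{\gamma}(c_2) \leq \mu^{2(q-p)/(q-2)}\, \tilde{\gamma}(c_1) < \tilde{\gamma}(c_1).$$
The only real technical point is the explicit maximization identity above; once that is in hand, the analogs of Lemmas \ref{growth} and \ref{mpestimate} for $\tilde F$ transfer verbatim from Section \ref{Section2} with the nonlocal term simply deleted.
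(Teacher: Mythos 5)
Your proof is correct, and its overall architecture is the same as the paper's: reduce $\tilde\gamma(c)$ to $\inf_{u\in S(c)}\max_{t>0}\tilde F(u^t)$, use the attained minimizer $u_{c_1}$ from Lemma \ref{lemma3}, transport it into $S(c_2)$, and show the fibered maximum strictly drops; the limits \eqref{limites} are obtained exactly as in the paper, via $\tilde Q(u_c)=0$ and Lemma \ref{lemma4}. The one genuine difference is the choice of map from $S(c_1)$ to $S(c_2)$: the paper uses the dilation $u_\theta(x)=\theta^{1-\frac{N}{2}}u_1(x/\theta)$, which fixes $\|\nabla u\|_2^2$ and strictly increases $\|u\|_p^p$ (since $(1-\frac{N}{2})p+N>0$ under $(C)$), and then reads off the strict decrease from the explicit formula for $\max_t\tilde F(u^t)$ as a function of $(\|\nabla u\|_2^2,\|u\|_p^p)$; you instead use plain multiplication $v=\mu u_{c_1}$ with $\mu^2=c_2/c_1$, which changes both norms but produces the clean multiplicative identity $\max_t\tilde F(v^t)=\mu^{2(q-p)/(q-2)}\max_t\tilde F(u_{c_1}^t)$ with $q=\frac{N(p-2)}{2}\in(2,p)$. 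Your exponent computation checks out (it is the same formula the paper displays, since $q/(q-2)=\frac{N(p-2)}{N(p-2)-4}$). A small bonus of your version that you did not exploit: the quantitative inequality $\tilde\gamma(c_2)\leq (c_2/c_1)^{(q-p)/(q-2)}\tilde\gamma(c_1)$, valid whenever the infimum at level $c_1$ is attained, already yields both limits in \eqref{limites} directly (send $c_2\to\infty$ with $c_1$ fixed for the second, and $c_1\to 0$ with $c_2$ fixed for the first), so the appeal to Lemma \ref{lemma4} could be dispensed with entirely.
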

\begin{proof}
Arguing as in the proof of Lemma \ref{lm1} we can deduce that
\begin{eqnarray}\label{17}
\tilde \gamma(c)= \inf_{u\in S(c)} \max_{t>0}\tilde F(u^t) =\inf_{u\in \tilde V(c)}\tilde F(u).
\end{eqnarray}
Here $\tilde V(c) = \{u \in H^1(\R^N): \tilde Q(u) =0 \}$ with
$$\tilde Q(u) =\left \|\triangledown  u \right \|_2^2-\frac{N(p-2)}{2p}\| u \|_p^p$$
 and $u^t(x)=t^{\frac{N}{2}}u(tx)$ for $t>0$. To show that $c \to \tilde \gamma(c)$ is strictly decreasing we just need to prove that: for any $c_1<c_2$, there holds $\tilde \gamma(c_2)< \tilde\gamma(c_1)$. By \eqref{17} we have
 $$ \tilde \gamma(c_1)=\inf_{u\in S(c_1)} \max_{t>0}\tilde F(u^t) \quad \mbox{and} \quad \tilde \gamma(c_2)=\inf_{u\in S(c_2)} \max_{t>0}\tilde F(u^t)$$
where
$$\tilde F(u^t)=\frac{t^2}{2}\left \|\triangledown  u \right \|_2^2-\frac{t^{\frac{N}{2}(p-2)}}{p}\| u \|_p^p.$$
After a simple calculation, we get
\begin{eqnarray}
\max_{t>0}\tilde F(u^t)=\widetilde{c}(p)\cdot \left (\frac{1}{2}\|\nabla u\|_2^2  \right )^{\frac{N(p-2)}{N(p-2)-4}}\cdot \left (\frac{1}{p}\|\nabla u\|_p^p  \right )^{-\frac{4}{N(p-2)-4}}
\end{eqnarray}
with
$$\widetilde{c}(p)=\left (\frac{4}{N(p-2)}\right )^{\frac{4}{N(p-2)-4}}\cdot \frac{N(p-2)-4}{N(p-2)}>0.$$
By Lemma \ref{lemma3}, we know that $\gamma(c_1)$ is attained, namely that there exists $u_1\in S(c_1)$, such that $\tilde \gamma(c_1)= \tilde F(u_1)=\max_{t>0}\tilde F(u_1^t)$. Then using the scaling $u_{\theta}(x)=\theta^{1-\frac{N}{2}}u_1(\frac{x}{\theta})$, we have
$$\|u_{\theta}\|_2^2=\theta^2\|u_1\|_2^2, \quad \|\nabla u_{\theta}\|_2^2=\|\nabla u_1\|_2^2 \quad \mbox{and} \quad \|u_{\theta}\|_p^p=\theta^{(1-\frac{N}{2})p+N}\|u_1\|_p^p.$$
Thus we can choose $\theta>1$ such that $u_{\theta}\in S(c_2)$. Under the condition $(C)$, we have $(1-\frac{N}{2})p+N>0$ for $N\geq 1$ and thus $\|u_{\theta}\|_p^p > \| u_1\|_p^p$. Now we have
\begin{eqnarray*}
\max_{t>0}\tilde F(u_{\theta}^t) &=& \widetilde{c}(p)\cdot \left (\frac{1}{2}\|\nabla u_{\theta}\|_2^2  \right )^{\frac{N(p-2)}{N(p-2)-4}}\cdot \left (\frac{1}{p}\|u_{\theta}\|_p^p  \right )^{-\frac{4}{N(p-2)-4}} \\
&<&\widetilde{c}(p)\cdot \left (\frac{1}{2}\|\nabla u_1\|_2^2  \right )^{\frac{N(p-2)}{N(p-2)-4}}\cdot \left (\frac{1}{p}\| u_1\|_p^p  \right )^{-\frac{4}{N(p-2)-4}} \\
&=&\max_{t>0}\tilde F(u_1^t),
\end{eqnarray*}
which implies that
\begin{eqnarray}
\tilde \gamma(c_1)=\max_{t>0}\tilde F(u_1^t)>\max_{t>0}\tilde F(u_{\theta}^t)\geq \tilde \gamma(c_2).
\end{eqnarray}
Finally, from Lemma 2.7 of \cite{LJ} we know that, for any $c >0$, $\tilde Q(u_c)=0$. Thus we can write
$$
\tilde \gamma(c)= \frac{N(p-2)-4}{2N(p-2)}\left \|\triangledown  u_c \right \|_2^2$$
and \eqref{limites} directly follows from Lemma \ref{lemma4}.
\end{proof}

Finally in analogy with Theorems   \ref{naturalconstraint} and  \ref{th2.1} we have
\begin{remark}
Let 
\begin{eqnarray}\label{minimizersetNLS}
\mathcal{\tilde{M}}_c := \{u_c\in \tilde{V}(c)\ : \ \tilde{F}(u_c)=\inf_{u\in \tilde{V}(c)}\tilde{F}(u)\}.
\end{eqnarray}
Then for any $u_c \in \mathcal{\tilde{M}}_c$ there exists a $\lambda_c <0$ such that $(u_c, \lambda_c) \in H^1(\R^N) \times \R$ solves 
(\ref{NLS}) and the standing wave solution $e^{-i \lambda_c t}u_c$ of (\ref{evolution2}) is strongly unstable. 

The proof of these statements is actually simpler than the ones for (\ref{main}) and thus we just indicate the main lines. We proceed as in Lemma \ref{minimizer} to show that for any $u_c \in \mathcal{\tilde{M}}_c$ there exists a $\lambda_c \in \R$ such that $(u_c, \lambda_c) \in H^1(\R^N) \times \R$ solves (\ref{NLS}). Indeed a version of Lemma \ref{growth} (and thus of Lemma \ref{mpestimate}) holds when $\tilde{F}(u)$ replaces $F(u)$ and this is precisely Lemma 8.2.5 in \cite{TC}. Now if for a $\lambda \in \R$, $u \in S(c)$ solves
\begin{equation}\label{91}
- \Delta u - |u|^{p-2}u = \lambda u,
\end{equation}
on one hand, multiplying (\ref{91}) by $u \in S(c)$ and integrating we obtain 
\begin{equation}\label{92}
||\nabla u||_2^2 - ||u||_p^p = \lambda c.
\end{equation}
On the other hand, since solutions of (\ref{91}) satisfy $\tilde{Q}(u)=0$, we have
\begin{equation}\label{93}
||\nabla u||_2^2 - \frac{N(p-2)}{2p}||u||_p^p = 0.
\end{equation}
Thus, since under $(C)$ $N(p-2)/2p < 1$, we deduce that necessarily $\lambda <0$. To conclude the proof we just have to show that the standing wave $e^{-i \lambda_c t}u_c$ is strongly unstable. This can be done following the same lines as in the proof of Theorem \ref{th2.1}. Here the fact that $\lambda_c <0$ insures the exponential decay at infinity of $u_c \in S(c)$ which permits to use the virial identity in the blow-up argument (see also \cite{BECA}).
\end{remark}


\end{document}